  \def\d{\delta} 
    \def\g{\gamma}
\def\G{\Gamma}  \def\k{\kappa}
\def\t{\tau}
\def\E{\mathbbm{E}}
\def\Pr{\mathbbm{P}}
\def\Ind{\mathbf{1}}
\def\whp{{\bf whp}}
\newcommand{\ignore}[1]{}
\newcommand{\Af}{{\mathcal I}_f}
\newcommand{\brac}[1]{\left(#1\right)}
\newcommand{\bfrac}[2]{\left(\frac{#1}{#2}\right)}
\newcommand{\eps}{\varepsilon}
\newtheorem{theorem}{Theorem}
\newtheorem{claim}{Claim}
\newtheorem{lemma}[theorem]{Lemma}
\newtheorem{proposition}[theorem]{Proposition}
\newtheorem{corollary}[theorem]{Corollary}
\def\PA{\text{PA}}
\def\depth{\text{depth}}
\begin{document}

\title{A phase transition in the evolution of bootstrap percolation processes on preferential attachment graphs}
 \author{Mohammed Amin Abdullah\thanks{m.a.abdullah@bham.ac.uk}  \footnotemark[3]
 \and Nikolaos Fountoulakis\thanks{ n.fountoulakis@bham.ac.uk} \thanks{School of Mathematics, University of Birmingham, Edgbaston, B15 2TT, U.K.. Research supported by the EPSRC Grant No. EP/K019749/1.} }
%\subjclass[2010]{Primary 60K30;60K35;05C90 Secondary 05C80;60C05}
%\address{School of Mathematics, University of Birmingham, Edgbaston, B15 2TT, U.K.} 
%\footnote{Research supported by the EPSRC Grant No. EP/K019749/1.}
%\email{m.a.abdullah@bham.ac.uk, n.fountoulakis@bham.ac.uk}

\maketitle

\begin{abstract}
The theme of this paper is the analysis of bootstrap percolation processes on random graphs 
generated by preferential attachment. This is a class of infection processes where vertices have two states: they are either~\emph{infected} or~\emph{susceptible}. 
At each round every susceptible vertex which has at least $r\geq 2$ infected neighbours becomes infected and remains so forever. 
Assume that initially $a(t)$ vertices are randomly infected, where $t$ is the total number of vertices of the graph. 
Suppose also that $r < m$, where $2m$ is the average degree.  
We determine a critical function $a_c(t)$ such that when 
$a(t) \gg a_c(t)$, complete infection occurs with high probability as $t \rightarrow \infty$, 
but when $a(t) \ll a_c (t)$, then with high probability the process evolves only for a bounded number of rounds and the final set of
infected vertices is asymptotically equal to $a(t)$. 
%The critical function satisfies $a_c(t)=o(t)$. 
%In other words, a sublinear initial infection leads to full infection. 
%In contrast, when $r > m$, we show deterministically that the final infected set has size at most $(m+1)a(t)$ and the above phenomenon does not occur regardless of $a(t)$. %That is, if $a(t) = o(t)$, then this is also the case for the final set. 
\end{abstract}

\section{Introduction} 

The dissemination of contagion within a network is a fundamental problem that arises in a wide spectrum 
of social and economic sciences. Among the mechanisms which underlie this phenomenon 
is a class of dissemination processes where local decisions (or \emph{microbehaviours}) aggregate into
a large outbreak or \emph{pandemic}. Quite frequently, these phenomena begin on a rather small scale and may end
up contaminating a large part of the network. What are the particular characteristics of a network that enable or 
inhibit such an outbreak? 

A general class of models that incorporates this kind of behaviour is what is called the \emph{general threshold model}~\cite{Klein2007}. 
Here it is assumed that each vertex has one of two states: it is either \emph{infected} or~\emph{susceptible}.
Furthermore, each vertex of the underlying graph is equipped with a threshold function which depends on the states of its neighbours. 
This function expresses the probability that this vertex remains in a particular state given the states of its neighbours.  
A central problem in viral marketing is given a network and such a set of functions, find a set of vertices $S$ which maximizes 
the expected number of infected vertices at the end of the process. 
In~\cite{KKT2003}, Kempe, Kleinberg and Tardos proved that finding such an optimal set is NP-hard. Moreover, they showed that 
it is NP-hard to approximate the size of the maximum expected outreach even within a polynomial factor.  See also~\cite{KKT05} 
for similar results. 

In this paper, we study an instance of this class of models known as \emph{bootstrap percolation processes}. 
This is a threshold model that was introduced in the context of mathematical physics by Chalupa, Leath and Reich~\cite{ChLeRe:79} 
in 1979 for magnetic disordered systems.
%  and has been re-discovered since then by several authors mainly due to its
% connections with various physical models.

A \emph{bootstrap percolation process} with \emph{activation threshold} an integer $r\geq 2$
on a (multi)graph $G=G(V,E)$ is a deterministic process.
Initially, there is a subset $\mathcal{I}_0=\mathcal{I}(0) \subseteq V$ of infected vertices, whereas every other vertex is susceptible.
This set can be selected either deterministically or randomly.
The process evolves in rounds, where in each round, if a susceptible vertex has at least $r$ edges connected to infected neighbours, 
then it also becomes 
infected and remains so forever. This is repeated until no more vertices become infected. We denote the final infected set by $\Af$.
We denote the set of susceptible (infected) vertices at round $\t$ in the process by $\mathcal{S}(\t)$ (respectively, $\mathcal{I}(\t)$).
Thus, $\mathcal{S}(\t)$, $\mathcal{I}(\t)$ form a partition of the vertex set $V$, and $\Af=\mathcal{I}(\infty)$. Of course, the above 
definition makes also perfect sense when $r=1$ -- in this case $\Af$ coincides with the set of vertices of the union of those
 components of $G$ which contain vertices in $\mathcal{I}_0$. 

Such processes (as well as several variations of them) have been  used as models
to describe several complex phenomena in diverse areas, from jamming transitions~\cite{tobifi06} and
magnetic systems~\cite{sadhsh02} to neuronal activity~\cite{Am-nn, ET09}.
Bootstrap percolation processes also have connections with the dynamics of the Ising model at zero
temperature~\cite{Fontes02},~\cite{GlauberMorris2009}. 
These processes have also been studied on a variety of graphs, such as trees~\cite{BPP06, FS08}, grids~\cite{CM02, holroyd03,
BBDM2010}, lattices on the hyperbolic plane~\cite{BootHyper2013},
hypercubes~\cite{BB06}, as well as on several distributions of random graphs~\cite{Am-bp, balpit07, ar:JLTV10}. 
A short survey regarding applications of bootstrap percolation processes can
be found in~\cite{AdL03}. The theme of this paper is the study of bootstrap percolation processes  on a random preferential attachment
random graph on 
$t$ vertices, which we denote by $\text{PA}_t(m,\d)$. 

\section{Preferential attachment graphs} 

The preferential attachment models have their origins in the work of Yule~\cite{Yule}, where a growing model is proposed in the context 
of the evolution of species. A similar model was proposed by Simon~\cite{Simon} in the statistics of language. 
The principle of these models was used by Barab\'asi and Albert~\cite{BarAlb} to describe a random graph model where vertices arrive 
one by one and each of them throws a number of half-edges to the existing graph. Each half-edge is connected to a vertex with probability
that is proportional to the degree of the latter. This model was defined rigorously by Bollob\'as, Riordan, Spencer and 
Tusn\'ady~\cite{DegSeq} (see also~\cite{Diam}). We will describe the most general form of the model which is essentially due to 
Dorogovtsev et al.~\cite{Dor} and Drinea et al.~\cite{Drinea}. Our description and notation below follows that 
from the book of van der Hofstad~\cite{Remco}.

The random graph $\text{PA}_t(m,\d)$ is parameterised by two constants: $m \in \mathbb{N}$, and $\d \in \mathbb{R}$, $\d > -m$.  
It gives rise to a random graph sequence (i.e., a sequence in which each member is a random graph), denoted by
$\brac{\text{PA}_t(m,\d)}_{t=1}^\infty$. The $t$th term of the sequence, $\text{PA}_t(m,\d)$ is a graph with $t$ vertices and $mt$
edges. Further, $\text{PA}_t(m,\d)$ is a subgraph of $\text{PA}_{t+1}(m,\d)$. We define $\text{PA}_t(1,\d)$ first, then use it to define
the general model $\text{PA}_t(m,\d)$ (the Barab\'asi-Albert model corresponds to the case $\d = 0$).

The random graph $\text{PA}_1(1,\d)$ consists of a single vertex with one self-loop. We denote the vertices of $\text{PA}_t(1,\d)$ by
$\{v_1^{(1)}, v_2^{(1)}, \ldots, v_t^{(1)}\}$. We denote the degree of vertex $v_i^{(1)}$ in $\text{PA}_t(1,\d)$ by $D_i(t)$. Then,
conditionally on $\text{PA}_t(1,\d)$, the growth rule to obtain $\text{PA}_{t+1}(1,\d)$ is as follows: We add a single vertex
$v_{t+1}^{(1)}$ having a single edge. The other end of the edge connects to $v_{t+1}^{(1)}$ itself with probability
$\frac{1+\d}{t(2+\d)+(1+\d)}$, and connects to a vertex $v_i^{(1)} \in \text{PA}_t (1,\d)$ with probability
$\frac{D_i(t)+\d}{t(2+\d)+(1+\d)}$ -- we write $v^{(1)}_{t+1}\rightarrow v_i^{(1)}$.  For any $t \in \mathbb{N}$, let $[t]=\{1,\ldots, t \}$. Thus,

\[ \Pr\brac{v^{(1)}_{t+1}\rightarrow v_i^{(1)} \mid \text{PA}_t(1,\d)} = \left\{ 
  \begin{array}{l l}
    \frac{1+\d}{t(2+\d)+(1+\d)} & \quad \text{for $i=t+1$,}\\
    \frac{D_i(t)+\d}{t(2+\d)+(1+\d)} & \quad \text{for $i \in [t]$}
  \end{array} \right.\]
The model $\text{PA}_t(m,\d)$, $m>1$, with vertices $\{ 1,\ldots, t \}$ is derived from
$\text{PA}_{mt}(1,\d/m)$ with vertices $\{v_1^{(1)}, v_2^{(1)}, \ldots, v_{mt}^{(1)}\}$ as follows: For each $i=1, 2, \ldots, t$, we 
 contract the vertices $\{v_{(i-1)+1}^{(1)}, v_{(i-1)+2}^{(1)}, \ldots, v_{(i-1)+t}^{(1)}\}$ into one super-vertex, and identify this
 super-vertex as $i$ in $\text{PA}_t(m,\d)$. When a contraction takes place, all loops and multiple edges are retained. Edges
shared between a set of contracted vertices become loops in the contracted super-vertex. Thus, $\text{PA}_t(m,\d)$ is a graph on $[t]$.

The above process gives a graph whose degree distribution follows a power law with exponent $3+ \delta /m$. This was suggested by 
the analyses in ~\cite{Dor} and~\cite{Drinea}. It was proved rigorously for integral $\delta$ by Buckley and 
Osthus~\cite{BuckleyOsthus2004}.
For a full proof for real $\delta$ see~\cite{Remco}. 
In particular, when $-m < \delta < 0$, the exponent is between 2 and 3. Experimental evidence has shown that this is the case for several
networks that emerge in applications (cf.~\cite{ar:StatMechs}). 
Furthermore, when $m\geq 2$, then $\text{PA}_t(m,\d)$ is \whp\ connected, but when 
$m=1$ this is not the case, giving rise to a logarithmic number of components (see~\cite{Remco}). 

We describe an alternative, though equivalent, direct construction of  $\brac{\text{PA}_t(m,\d)}_{t = 1}^{\infty}$. Let $\text{PA}_1(m,\d)$ be a single vertex with label $1$, having $m$ loops. Given $\text{PA}_{t-1}(m,\d)$, $t\geq 2$,  the construction of $\text{PA}_t(m,\d)$ is as follows: To add vertex $t$ to the graph, we split time step $t$ into $m$ sub-steps, adding one edge sequentially in each sub-step. For $j=1,2,\ldots,m$, denote the graph after the $j$th sub-step of time $t$ by $\text{PA}_{t,j}(m,\d)$. Hence $\text{PA}_t(m,\d) \equiv \text{PA}_{t,m}(m,\d)$. For notational convenience, let $\text{PA}_{t,0}(m,\d) = \text{PA}_{t-1}(m,\d)$.

Denote the $j$th edge added by $e_j$. One end of $e_j$ will be attached to vertex $t$ and the other end will be attached randomly to another vertex (which may be $t$). Let $g(t, j)$ be the random variable representing this vertex. For $j=1,2,\ldots,m$, let $D_i(t,j)$ be the degree of vertex $i$ in $\text{PA}_{t,j}(m,\d)$. That is, for $j=1,2,\ldots,m$, $D_i(t,j)$ the degree of vertex $i$ after both ends of $e_j$ have been attached. Furthermore, for notational convenience, let $D_{t}(t,0)=0$ and for $i \in [t-1]$, let $D_i(t,0)=D_i(t-1)$. 
 
Now, for $j=1,2,\ldots,m$, conditionally on $\text{PA}_{t,j-1}(m,\d)$, $\text{PA}_{t,j}(m,\d)$ is generated  according to the following probability rules: 

\[ \Pr\brac{g(t, j) = i \mid \text{PA}_{t,j-1}(m,\d)} = \left\{ 
  \begin{array}{l l}
    \frac{D_{t}(t, j-1)+1+j\d/m}{(2m+\d)(t-1)+2j-1+j\d/m} & \quad \text{for $i=t$,}\\
    \frac{D_i(t,j-1)+\d}{(2m+\d)(t-1)+2j-1+j\d/m} & \quad \text{for $i \in [t-1]$}
  \end{array} \right. .\]

It is not difficult to see that these two constructions give rise to the same probability distribution over realisations of $\brac{\text{PA}_t(m,\d)}_{t = 1}^{\infty}$. It will be sometimes convenient to refer to one form over the other.

\subsection{Results} 
Here as well as in the rest of the paper the term \emph{with high probability} (\whp) means with 
probability $1-o(1)$ in the space of $\text{PA}_t(m,\d)$, as $t\rightarrow \infty$.  
We will be using the same term for events over the product space between $\text{PA}_t(m,\d)$ and the choice of 
$\mathcal{I}_0$ on $[t]$.  
The selection of $\mathcal{I}_0$ is random and each vertex is infected initially with probability $p=p(t)=a(t)/t$, independently of any other vertex. Hence, if $t$ is large and $a(t) \rightarrow \infty$ as $t \rightarrow \infty$, the size of $\mathcal{I}_0$ is with high 
probability close to $a(t)$.

Let $X_t$ be a random variable on the above product space. 
If $a \in \mathbb{R}$, we write that $X_t \stackrel{p}{\rightarrow} a$ ($X_t$ \emph{converges to $a$ in probability}) if 
for any $\eps>0$ we have $\Pr\brac{|X_t - a| >\eps} \rightarrow 0$ as $t \rightarrow \infty$.  

Recently, Ebrahimi et al.~\cite{EGGS14} investigated a threshold phenomenon that occurs in the evolution of the process on a variant of the preferential
attachment model, that is very similar (though not identical) to $\text{PA}_t (m,\d )$. In our context, their results can be stated as follows. 
Let $\g=\frac{m}{2m+\d}$.
If $a(t) \gg t^{1-\gamma}\log t$, then \whp \ $\mathcal{I}_f= [t]$, that is, we have complete infection.
They also identified a subcritical range for $a(t)$. 
Assume first that $r \gamma \geq 1$. If $a(t) \ll t^{1-\gamma}$, then \whp\ $\mathcal{I}_f = \mathcal{I}_0$, that is no evolution occurs. 
Now, if $r\gamma < 1$, then the same holds but provided that $a(t) \ll t^{1-1/r}$. Since $\gamma < 1/r$, that is, 
$1-\gamma > 1 - 1/r$, it follows that this function is asymptotically smaller than the $t^{1-\gamma}$. 
Similar results were obtained by the two authors in~\cite{AbFount2014} for $\text{PA}_t (m,\d )$. 

In this paper, we complete the landscape and show that a critical phenomenon occurs ``around" the function 
$t^{1-\g}=:a_c (t) = a_c$. Let $\omega=\omega(t) \rightarrow \infty$  as $t \rightarrow \infty$ arbitrarily slowly.
Our results show that when $a(t) \gg a_c(t)$, there is complete infection \whp, but if
$a(t)\ll a_c(t)$ then either there is no evolution of the process or it halts in a bounded number of rounds. 
(In fact, for $r=2$ we show a slightly weaker result that requires $a(t)\leq a_c (t)/\log t$.)
In the latter case, 
the process accumulates only a small number of infections beyond those incurred initially, so that $\mathcal{I}_f$ is almost equal to
$\mathcal{I}_0$. 
Inside the critical window, that is, if $a(t)=\Theta(a_c(t))$, then with probability asymptotically bounded away from zero there is complete
infection, and with probability bounded away from zero we have similar behaviour as for the $a(t) \ll a_c(t)$ case.

The above can be formalized as follows. 
\\

\begin{theorem}[Supercritical case]\label{Supercritical case} 
If $r<m$ and $a(t)=\omega a_c(t)$ then all vertices in $\PA_t(m,\d)$ get infected \whp.
\end{theorem}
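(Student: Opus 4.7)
The plan is to grow the infected set through a nested chain of initial segments $[T_0]\subset[T_1]\subset\cdots\subset[T_K]=[t]$ and to show inductively that $[T_k]$ lies in the infected set $\mathcal{I}(k+1)$ whp. The nucleation core $[T_0]$ is ignited directly by $\mathcal{I}_0$, and each subsequent annulus $(T_{k-1},T_k]$ is infected via back-edges into the already-infected core $[T_{k-1}]$. This exploits the key hypothesis $r<m$: each vertex has strictly more back-edges available than the threshold.

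First I would handle the base case by choosing $T_0 = (m\omega/(2r))^{1/\gamma}$ (a slowly growing quantity, since $\omega\to\infty$ arbitrarily slowly). By standard PA martingale concentration, $D_i(t)$ is concentrated around a constant multiple of $(t/i)^\gamma$; since $\mathcal{I}_0$ is independent of the graph with inclusion probability $p=\omega t^{-\gamma}$, the number of initially infected neighbours of $i$ is, conditional on the graph, binomial with mean of order $m\omega\, i^{-\gamma}$. For $i\leq T_0$ this exceeds $2r$, so Chernoff plus a union bound over $[T_0]$ yield $[T_0]\subseteq\mathcal{I}(1)$ whp. For the inductive step I would take $T_k=C\,T_{k-1}$ with a constant $1<C<(m/r)^{1/(1-\gamma)}$. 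Assuming $[T_{k-1}]\subseteq\mathcal{I}(k)$, for $i\in(T_{k-1},T_k]$ and each sub-step $j\leq m$ the direct construction gives
\[
\Pr\bigl(g(i,j)\in[T_{k-1}]\bigr) = \frac{\sum_{a\leq T_{k-1}}\bigl(D_a(i,j-1)+\d\bigr)}{(2m+\d)(i-1)+2j-1+j\d/m},
\]
which, by concentration of the partial degree sum $\sum_{a\leq T_{k-1}}D_a(i)$, is bounded below by $q_\star$ of order $(T_{k-1}/i)^{1-\gamma}$. Choosing $C$ so that $mq_\star>r$ with a constant margin, a binomial tail bound should give $i$ at least $r$ back-edges into $[T_{k-1}]$; union-bounding over $i$ and iterating for $K=O(\log t)$ rounds then reaches $T_K\geq t$ and hence complete infection.

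The hard part will be the per-vertex failure probability in the inductive step. The quantity $q_\star$ is bounded away from $1$ for $i$ meaningfully larger than $T_{k-1}$, so the probability that $\mathrm{Bin}(m,q_\star)<r$ is only a constant rather than $o(1/t)$, and a naive union bound over the $\Theta(t)$ vertices of the annulus fails. Two ingredients should rescue this: first, shrinking the growth factor towards~$1$ so that each new $i$ lies very close to $T_{k-1}$ and the per-edge hitting probability is close to~$1$, yielding polynomially small per-vertex failure at the cost of more (but tolerably many) rounds; and second, enlarging the effective source set beyond $[T_{k-1}]$ by also counting back-edges hitting $\mathcal{I}_0\setminus[T_{k-1}]$ together with by-product infections produced outside the nominal core in earlier rounds. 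Combined, these should push the hitting probability above the threshold needed for the exponential concentration that drives the union bound, allowing the induction to carry through all $K$ rounds and deliver complete infection whp.
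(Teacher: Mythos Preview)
Your base case is in the right spirit and close to what the paper does, but your inductive step contains a genuine gap that you yourself identify and do not close. The per-vertex failure probability in your annulus argument is only a constant bounded away from zero, so no union bound over $\Theta(t)$ vertices can succeed; your proposed remedies (shrinking the growth factor, enlarging the source set) are speculative and would at best buy polynomially small failure at the cost of polynomially many rounds, with no clear way to make the total error $o(1)$.

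The paper sidesteps this difficulty entirely by exploiting the hypothesis $r<m$ in a much more direct way. After establishing (essentially as you do) that a small core $[\kappa]$ with $\kappa\to\infty$ slowly is infected in round~1, the paper observes that every vertex $i>\kappa$ throws exactly $m$ edges to vertices in $[i]$, and these land in $[i-1]$ except for self-loops. Thus the only obstruction to $i$ having at least $m-1\geq r$ edges into $[i-1]$ is the event that $i$ has two or more self-loops; this has probability $O(i^{-2})$, and summing over $i>\kappa$ gives $O(\kappa^{-1})=o(1)$. Hence whp \emph{every} vertex outside the core has at least $r$ back-edges, and a trivial induction on vertex label (not on annuli) infects the whole graph. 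The point you missed is that you do not need edges into a \emph{fixed} prefix $[T_{k-1}]$; edges into $[i-1]$ suffice, and those are guaranteed deterministically up to the self-loop nuisance, which is what actually makes the union bound go through.
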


\vspace*{5mm}

\begin{theorem}[Subcritical case]\label{Subcritical case}
If $r \leq m$ then the following hold:
\begin{description}
\item[(i)] If $a(t)=a_c(t)/\omega$ and $r\g>1$, then \whp,  $\mathcal{I}_f=\mathcal{I}_0$.
\item[(ii)] If $a(t)=a_c(t)/\omega$ and $r \geq 3$ then 
$|\mathcal{I}_f|/|\mathcal{I}_0| \stackrel{p}{\rightarrow}1$ and \whp\ the process stops in at most 
$\lfloor \frac{1}{\g} \rfloor$ rounds.  
\item[(iii)] If $a(t)=a_c(t)/\log t$ and $r=2$, then $|\mathcal{I}_f|/|\mathcal{I}_0| \stackrel{p}{\rightarrow} 1$ and 
\whp\ the process stops in at most
$\lfloor \frac{1}{\g} \rfloor+1$ rounds.  
\end{description}
\end{theorem}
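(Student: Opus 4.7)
The plan is to analyse the process round by round via a first-moment / union-bound argument, resting on two ingredients. First, the standard consequences of the power-law degree distribution of $\PA_t(m,\d)$: \whp, the number of vertices of degree at least $d$ is $O(t d^{-1/\g})$, the maximum degree is at most $t^{\g}$ up to polylogarithmic factors, and consequently $\sum_v D_v^r = O\brac{t^{\max\{1,r\g\}}}$, with an extra $\log t$ factor precisely in the borderline case $r\g = 1$. Second, conditionally on the graph and given that $\mathcal{I}(\tau)$ behaves approximately uniformly of density $p_\tau := |\mathcal{I}(\tau)|/t$, the expected number of vertices becoming newly infected in the next round is controlled by the union bound
\[
\E\brac{|\mathcal{I}(\tau+1)|-|\mathcal{I}(\tau)|}
\;\leq\; \frac{p_\tau^{\,r}}{r!}\, \sum_v D_v^r .
\]

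For part (i), apply this inequality at $\tau=0$ with $p_0 = t^{-\g}/\om$. Since $r\g>1$, the moment bound gives $\sum_v D_v^r = O(t^{r\g})$, so the right-hand side is $O(\om^{-r}) = o(1)$. Markov's inequality then yields that \whp\ no vertex becomes newly infected in round~$1$; the process halts immediately and $\mathcal{I}_f=\mathcal{I}_0$.

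For parts (ii) and (iii), I proceed inductively on $\tau$, showing that $|\mathcal{I}(\tau)|=a(t)(1+o(1))$ for every $\tau$ up to a bounded number of rounds and that the process halts by that time. The inductive step uses the same union bound: because $p_\tau \leq p_0(1+o(1))$, a short calculation distinguishing the cases $r\g<1$, $r\g=1$ and $r\g>1$ gives $p_\tau^{\,r}\sum_v D_v^r = o(a(t))$ for every $r\geq 3$ under the hypothesis $a(t)=a_c(t)/\om$, and for $r=2$ under the slightly stronger hypothesis $a(t)=a_c(t)/\log t$ of part (iii); in the latter, the extra $\log t$ slack is precisely what compensates the borderline moment bound $\sum_v D_v^2 = O(t\log t)$. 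The bound on the number of rounds follows from the age/degree correspondence: a vertex born at time $s$ has typical degree $\Theta\brac{(t/s)^\g}$, and can be infected in round $\tau+1$ only if this exceeds $r/p_\tau$, i.e.\ only if $s \leq t p_\tau^{1/\g}$. Iterating this ``peeling'' while keeping $p_\tau \leq p_0(1+o(1))$, the set of candidate vertices lives near the root of the graph and its admissible age-window contracts by a $\g$-power each round; one checks that it is exhausted after $\lfloor 1/\g\rfloor$ rounds, with one extra round needed in the $r=2$ case to absorb the $\log t$.

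The principal obstacle I anticipate is dependency control. The first-moment estimates above tacitly assume that, at each round, $\mathcal{I}(\tau)$ behaves like a uniformly random subset — but after a handful of very high-degree ``hub'' vertices become infected in early rounds, the distribution of $\mathcal{I}(\tau)$ is biased toward hub-neighbourhoods and the naive binomial approximation may fail. To handle this I would condition on a high-probability event that fixes the degree sequence, isolates the bounded set of hubs (estimated by the part~(i) calculation, of order $\om^{-1/\g}$ in expectation), and pins down the structure of their neighbourhoods, so that the binomial union-bound estimates remain valid uniformly over the constant number of rounds required to close the induction.
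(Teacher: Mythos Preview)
Your approach to part~(i) is sound and close in spirit to the paper's: the paper also runs a first-moment argument over depth-$1$ infection configurations, though it phrases this as counting ``depth-$1$ witness trees'' and uses the edge-probability bound of Corollary~\ref{edgesJointProb} rather than a global moment $\sum_v D_v^r$. Either packaging works here because at $\tau=0$ the infected set really is i.i.d.\ Bernoulli$(p_0)$, so the binomial union bound is legitimate.

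The gap is in parts~(ii) and~(iii). Your inductive step applies the same inequality
\[
\E\bigl[\,|\mathcal{I}(\tau+1)\setminus\mathcal{I}(\tau)|\,\bigr]\;\le\;\frac{p_\tau^{\,r}}{r!}\sum_v D_v^r
\]
at every round, but this bound is only valid when membership in $\mathcal{I}(\tau)$ is independent across vertices. For $\tau\ge 1$ it is not: the vertices added in round~$1$ are precisely those with $r$ initially infected neighbours, so $\mathcal{I}(1)\setminus\mathcal{I}_0$ is heavily biased toward high-degree vertices and, more importantly, toward vertices that are adjacent to one another through common infected neighbours. Your proposed fix --- condition on the degree sequence and isolate a bounded set of hubs --- does not address this, because the correlation is not carried by a few hubs but by the entire neighbourhood structure of $\mathcal{I}_0$. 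A vertex $v$ whose neighbour $w$ is in $\mathcal{I}(1)\setminus\mathcal{I}_0$ already has, through $w$, $r$ short paths into $\mathcal{I}_0$; the event that \emph{another} neighbour of $v$ is in $\mathcal{I}(1)$ is positively correlated with this. The peeling argument for the round bound has the same defect: the claim ``$v$ can be infected in round $\tau+1$ only if $D_v\gtrsim r/p_\tau$'' is a statement about expectations, not an inclusion of events, and is simply false for any fixed $v$ with $D_v\ge r$.

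The paper's remedy is to abandon the round-by-round viewpoint and instead trace the full causal history of an infection back to $\mathcal{I}_0$. If $i$ becomes infected in round $d$, there is a rooted ``witness structure'' $S_i$ of depth $d$ in $\PA_t(m,\d)$ whose leaves all lie in $\mathcal{I}_0$ and whose internal vertices each have $r$ edges to children. Because only the \emph{leaves} need to be initially infected, and $\mathcal{I}_0$ \emph{is} i.i.d., the first-moment computation becomes tractable: one bounds $\sum_i \E[X_{\overrightarrow{\mathcal{S}_i}}]$ for each bounded-size isomorphism class $\overrightarrow{\mathcal{S}}$ using the edge-probability estimates. The recursive weight functions of Lemma~\ref{fLemma} do exactly this for trees, and the transformation $T$ of Section~\ref{General witness structures} reduces structures with cycles to trees. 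The depth bound $d_0=\lfloor 1/\g\rfloor+1$ then falls out of the exponent bookkeeping: a depth-$d_0$ witness tree has at least $d_0$ leaves and its expected count is $O(\omega^{-\ell}\sum_i i^{-y})$ with $y\ge d_0\g>1$. In effect, the witness-structure method is what your round-by-round recursion unrolls into once you correctly expand the event ``$v$ has $r$ neighbours in $\mathcal{I}(\tau)$'' in terms of $\mathcal{I}_0$; there does not appear to be a shortcut that avoids this unrolling.
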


%\textbf{[Does the subcritical case hold for all $r \geq 2$? by a simple coupling argument it would, wouldn't it?]}

It should be noted that when $\d<0$, $r\g>1$ is always satisfied, since we insist that $r \geq 2$. 

\vspace*{5mm}

\begin{theorem}[Critical case]\label{Critical case}
Let $r \geq 3$ and $a(t)=\lambda a_c(t)$ where $\lambda$ is a constant.  
Then there exist $p_1 < p_2$ depending on $\lambda$ such that the following hold for any $t$ large enough:
\begin{description}
\item[(i)] if $r \leq m$, then the following holds with probability at least $p_1$:
 vertices are infected for at most 
$\lfloor \frac{1}{\g} \rfloor$ rounds, and $|\mathcal{I}_f|/|\mathcal{I}_0| < 1+\varepsilon$, for any $\varepsilon>0$.  
\item[(ii)] if $r<m$, then with probability at least $p_2$, there is a complete infection.
\end{description}
\end{theorem}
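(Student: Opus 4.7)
The guiding idea is to view the critical regime $a(t)=\l a_c(t)$ as sitting exactly on the boundary between the two regimes already handled by Theorems~\ref{Supercritical case} and~\ref{Subcritical case}, so that the joint randomness of $\PA_t(m,\d)$ and of the initial configuration $\mathcal{I}_0$ determines, with positive probability each way, which of the two scenarios actually unfolds. The plan is to identify two disjoint ``good'' events $G_1$ and $G_2$ of positive probability and, on each of them, to couple the critical process with a version of Theorem~\ref{Subcritical case}(ii) or Theorem~\ref{Supercritical case}, respectively, invoking those theorems essentially as black boxes.

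For part (i), I would first construct $G_1$ as a ``low-activity'' event of probability at least $p_1>0$ on which the first $\lfloor 1/\g\rfloor$ rounds of the process obey the same first-moment bounds as in the proof of Theorem~\ref{Subcritical case}(ii). Concretely, the quantity driving the expansion from round $k$ to round $k+1$ is the expected number of susceptible vertices acquiring $r$ infected neighbours, which, using the explicit degree profile of $\PA_t(m,\d)$ and the fact that vertex $i$ has expected degree of order $(t/i)^\g$, scales like a constant (depending on $\l$) times a power of $a_c(t)$ whose exponent strictly decreases each round and drops below $1$ after $\lfloor 1/\g\rfloor$ steps. A Markov / first-moment argument then gives $|\mathcal{I}_f|/|\mathcal{I}_0|<1+\varepsilon$ \whp\ on $G_1$, together with termination within $\lfloor 1/\g\rfloor$ rounds.

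For part (ii), I would exhibit $G_2$ of probability at least $p_2>0$ on which $\mathcal{I}_0$ already contains a suitably placed ``seed'' --- for instance, at least $r$ initially infected vertices inside the common neighbourhood of a small fixed collection of the oldest (highest-degree) vertices of $\PA_t(m,\d)$ --- so that after $O(1)$ rounds the infected set exceeds $\om\, a_c(t)$ for some $\om=\om(t)\to\infty$, at which point Theorem~\ref{Supercritical case} (using $r<m$) delivers complete infection \whp\ conditionally on $G_2$. The positive lower bound $p_2$ comes from a Poisson-type computation on the joint neighbourhoods of the first $O(1)$ vertices, whose degrees concentrate around $t^{\g}$ and whose neighbours each lie in $\mathcal{I}_0$ independently with probability $\l a_c(t)/t$.

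The main difficulty is to disentangle the randomness of the graph from that of $\mathcal{I}_0$ while preserving positive, $\l$-dependent probabilities for both $G_1$ and $G_2$. In the sub- and supercritical cases an extra $\om$-factor of slack makes union bounds and second-moment arguments essentially free, but in the critical window every constant matters: one must control jointly the degree sequence restricted to the oldest $O(1)$ vertices and the pattern of initial infections in their neighbourhoods, verify that both target events carry probability $\TH(1)$ as $t\to\infty$, and ensure that the estimates driving the first-moment bound on $G_1$ remain valid after conditioning on $G_1$ rather than only in expectation. Once this joint control is in place, the rest of the argument amounts to routing the process into the regimes already understood by Theorems~\ref{Supercritical case} and~\ref{Subcritical case}.
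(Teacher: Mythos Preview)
Your outline for part~(i) has a real gap. In the critical window $a(t)=\l a_c(t)$, the expected number of depth-$d_0$ witness structures rooted anywhere is $\Theta(1)$, \emph{not} $o(1)$; this is exactly what replacing $1/\om$ by $\l$ in the bound~\eqref{eq:exp_upper_bound} gives. A bare Markov/first-moment argument therefore only yields $\Pr(\text{at least one activated witness})\le C_\l$, which says nothing if $C_\l\ge 1$. The paper's proof closes this gap in two steps: first, condition on the graph event $\s(d_0)=\{\,|\mathcal{T}(d_0,\ell)|\le C\,\E|\mathcal{T}(d_0,\ell)|\ \forall \ell\,\}$, which by Markov has probability at least $1-r^{d_0+1}/C$; second, on this graph, the events $A_T=\{\text{all leaves of }T\text{ lie in }\mathcal{I}_0\}$ are increasing in the product measure of the Bernoulli($p$) coin flips, so FKG gives
\[
\Pr\Bigl(\bigcap_T A_T^c\Bigr)\ge \prod_T(1-\Pr(A_T))\ge \exp\Bigl(-2\sum_T\Pr(A_T)\Bigr)=\exp(-O_\l(1)).
\]
It is this correlation inequality, not a first-moment bound, that produces a probability bounded away from zero; your proposal does not name any such tool, and without it the argument does not close.

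For part~(ii) your plan would work in principle but is more elaborate than necessary, and it hides a subtlety: Theorem~\ref{Supercritical case} is stated for a \emph{random} $\mathcal{I}_0$, not for an arbitrary infected set of size $\om a_c(t)$, so you cannot quote it as a black box after conditioning on your seed event $G_2$. The paper bypasses this entirely: it shows directly that with probability bounded away from zero (i) $D_1(t)\ge \eps t^\g$ (via Lemma~\ref{ColLem1} or~\ref{SumConcLem}), (ii) vertex~$1$ then has $\ge r$ initially infected neighbours (a binomial calculation giving probability $\asymp\l^r$), and (iii) no vertex $i>1$ has two self-loops. On this intersection, since $r<m$, every vertex $i>1$ has at least $m-1\ge r$ out-edges to $[i-1]$, so infection of vertex~$1$ alone forces complete infection deterministically. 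No boost to the supercritical regime is needed.
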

\vspace*{6mm}

The function $a_c(t)$ was also identified by the second author and Amini~\cite{ar:AmFount2012} in 
the case of inhomogeneous random graphs of rank 1. However, results of Amini~\cite{Am-bp} imply that if the kernel 
of such a random graph gives rise to a power law degree distribution with exponent larger than 3 (corresponds to 
$\delta >0$), then \whp, a \emph{sublinear} initial infection only results in a \emph{sublinear} outbreak. 
As our results and the results in~\cite{EGGS14} show this is not the case in the preferential attachment model. 
In other words, a sublinear initial infection leads to an outbreak where every vertex becomes infected, provided that 
the amount of the initial infection is not too small. Theorems~\ref{Supercritical case} and~\ref{Subcritical case} identify 
this critical amount. 

Lack of outbreak is also the case in random regular graphs of constant degree~\cite{balpit07} as well as in 
binomial random graphs with constant expected degree~\cite{ar:JLTV10}. In the latter case, the authors show that if $a(t) = o(t)$, then
$|\Af | / |\mathcal{I}_0 |\stackrel{p}{\rightarrow} 1$. This behaviour is radically different from that in the preferential attachment model,
 where Theorem~\ref{Supercritical case} implies that a sublinear initial infection may lead to pandemics.

\subsubsection{The cases $r = m$ and $r>m$}
It can be shown that there are a logarithmic number of self-loops in $\PA_t(m,\d)$.  For $r=m$, these loops make analysis
of the outcome difficult. This is a rather specific artefact of the model and, is not shared with slight variations of the model, 
e.g., one in which self-loops are not allowed. 

For $r>m$ the following ``folklore'' argument shows that if the number of initially infected vertices is sublinear, then
the final number will be sublinear as well: Let $G$ be the subgraph induced
by all the vertices in $\mathcal{I}_f$. The number of edges in $G$ is at least $(|\mathcal{I}_f|-|\mathcal{I}_0|)r$ but at the same time, 
the total number of edges in $G$ can be at most $m|\mathcal{I}_f|$. Therefore $(|\mathcal{I}_f|-|\mathcal{I}_0|)r\leq m|\mathcal{I}_f|$ 
implying $|\mathcal{I}_f| \leq \frac{r}{r-m}|\mathcal{I}_0|$. 

\subsection{Further notation and terminology}

Throughout this paper we let $\g=\g(m,\d)=\frac{1}{2+\d/m}$, hence $1-\g=\frac{1+\d/m}{2+\d/m}$. Observe the condition $\d>-m$ (which must be imposed), implies $0<\g<1$. Furthermore, $\d<0$ if and only if  $\frac{1}{2}<\g<1$.

For integers $i,j$ with $i \leq j$, we shall sometimes write $[i,j]$ to denote the set $\{i, i+1, \ldots, j\}$. We also use $S_i(t)$ to denote the sum of degrees for vertices in the interval $[1,i]$, i.e., $S_i(t)=\sum_{j=1}^iD_j(t)$.

We will sometimes say a vertex $j$ \emph{throws} an edge $e$ to vertex $i$ if, in the construction of $\text{PA}_j(m,\d)$, vertex $j$ connected edge $e$ to vertex $i$. We will also say $i$ \emph{receives} the edge $e$. 

%For two random variables $X$ and $Y$, $X \preceq Y$ denotes that $X$ is stochastically dominated by $Y$. 

Furthermore, for two non-negative functions $f(t), g(t)$ on $\mathbb{N}$ we write $f(t) \lesssim g(t)$ to denote that $f(t) = O(g(t))$. 
If, in addition, $g(t) = O(f(t))$, then we write $f(t) \asymp g(t)$. In this paper, the underlying asymptotic variable will
 always be $t$, the number of vertices in $\text{PA}_t(m,\d)$. 

We  use the notation $f(c) \overset{(m,\d)}\leq g(c)$ to mean that there is a constant $C(m, \d)$  such that $f(c) \leq C(m,\d) g(c)$,
 and $C(m,\d)$  depends only on $m, \d$. 

We will begin with some general results in the next section on the concentration of the degrees, which will be used mainly in the Proof of 
Theorem~\ref{Supercritical case}. 

\section{Vertex degrees: expectation and concentration}
As we mentioned, above the degrees in $\text{PA}_t(m, \d)$ roughly follow a power-law degree distribution with exponent
$3+\d/m$, that is,  the empirical probability mass function on the degrees scales like $\frac{1}{x^{3+\d/m}}$. 
In fact, many networks that emerge in applications have a degree distribution that follows a power law with exponent between $2$ and $3$ 
(cf.~\cite{ar:StatMechs} for example), which corresponds to $\d/m \in (-1,0)$. The Barabasi-Albert model gives power-law with
exponent $3$ ($\d =0$). Observe that the variance on the degrees is finite if and only if the exponent is greater than $3$ (corresponding to
$\d > 0$).

Consider two vertices $i$ and $j$; their total weight is $D_i(t)+D_j(t)+2\d$, meaning probability of an edge being thrown to them is proportional to this value. Now a vertex with degree $D_i(t)+D_j(t)$ would have weight $D_i(t)+D_j(t)+\d$. Thus, we cannot treat two separate vertices $i$ and $j$ as a single one of the combined degree, except when $\d=0$. In the special case that $\d=0$, the weight of a vertex is proportional to its degree, and the weight of a set of vertices is proportional to the sum of their degrees. When $\d=0$, we can treat a set of vertices as a bucket of \emph{half-edges}, or \emph{stubs}, conceptually distributing the stubs across the vertices however we like. However, when $\d \neq 0$, the weighting is non-linear. Conceptually grouping stubs together means that one has to sum their weights not their degrees. 

In summary, the probability of a vertex receiving the next edge thrown is proportional to its weight. The same holds for a set of vertices; the probability a set of vertices receiving an edge is proportional to the total weight of the set. When, and only when, $\d=0$, then the weight of a vertex is its degree, and the weight of a set is the total degree of the vertices in the set.

It is worth considering how $\d$ biases edge throws. Having $\d=0$ means edge throws are biased towards vertices in proportion to their degree. A negative $\d$ biases toward high degree vertices even more, since the proportional reduction in their weights is less. In fact, it is instructive to consider that if $m=1$ and $\d=-m$ (which this model does not permit), then the result would be that every vertex connects its single edge to the first vertex. 

Consider the case $\d>0$. This reduces the power of heavy vertices to attract edges. In fact, when $\d \gg m \gg 0$, the graph starts to looks fairly regular, since the $\d$ terms dominate in the update rules, and edges are thrown almost uniformly at random. 

A number of results on the degree sequence are collected in van der Hofstad \cite{Remco} which shows, amongst other things, that
$E[D_i(t)]=(1+o(1))a\bfrac{t}{i}^\g$ where $a$ is a constant that depends only on $m$ and $\d$.

\subsection{Sum of degrees}
We state the following without proof. It is a simple consequence of results in, e.g., \cite{Remco}.
\\

\begin{proposition}\label{CorToProp1}
There exist constants $C_\ell, C_u>0$ that depend only on $m$ and $\d$ such that for each vertex $i \in [t]$,
\begin{equation*}
C_\ell t^{\g}i^{1-\g} \leq \E[S_i(t)] \leq C_ut^{\g}i^{1-\g}.
\end{equation*}
\end{proposition}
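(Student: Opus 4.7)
The plan is to combine linearity of expectation with the individual-degree asymptotic $\E[D_j(t)] = (1+o(1))a(t/j)^{\g}$ recalled in the preceding subsection. To obtain constants uniform in $i$ and $t$ (rather than the $(1+o(1))$ form), I would first upgrade that statement to the two-sided non-asymptotic bound
\[
c_\ell \left(\frac{t}{j}\right)^{\g} \;\leq\; \E[D_j(t)] \;\leq\; c_u \left(\frac{t}{j}\right)^{\g}, \qquad 1 \leq j \leq t,
\]
with $c_\ell, c_u$ depending only on $m, \d$. This sharper version follows from the closed-form expression for $\E[D_j(t)]$ as a ratio of Gamma functions, obtained by iterating the one-step recursion for $\E[D_j(t+1)\mid \PA_t(m,\d)]$ implied by the attachment rule, combined with the elementary estimate $\Gamma(x+a)/\Gamma(x+b) = x^{a-b}(1+O(1/x))$; the computation is carried out in Chapter~8 of~\cite{Remco}.

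Given the per-vertex bound, the proposition reduces to a sum estimate. By linearity of expectation,
\[
\E[S_i(t)] \;=\; \sum_{j=1}^i \E[D_j(t)] \;\asymp\; t^{\g} \sum_{j=1}^i j^{-\g},
\]
and since $0 < \g < 1$, an integral comparison
\[
\int_1^{i+1} x^{-\g}\,dx \;\leq\; \sum_{j=1}^i j^{-\g} \;\leq\; 1 + \int_1^i x^{-\g}\,dx
\]
pins the sum at $\Theta(i^{1-\g})$ with constants depending only on $\g$; the regime of bounded $i$ is trivial since $i^{1-\g}$ is then itself bounded. Absorbing the $\g$-dependent prefactors into $C_\ell$ and $C_u$ yields the claimed two-sided inequality, uniformly in $i$ and $t$.

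The main obstacle here, such as it is, is notational rather than substantive: extracting uniform constants rather than the $(1+o(1))$ form in which the degree asymptotic is usually stated. That uniformity is really a property of the Gamma ratio, and it also handles the potentially delicate boundary regime where $j$ is close to $t$ and vertex $j$ has had little time to accumulate extra edges; there the lower bound is already enforced by the $m$ half-edges vertex $j$ is born with, while the upper bound follows from a short induction on $t$ against the attachment rule.
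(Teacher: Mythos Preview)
Your proposal is correct and is essentially the argument the paper has in mind: the paper states the proposition without proof, remarking only that it is ``a simple consequence of results in, e.g., \cite{Remco},'' having just recalled the individual-degree asymptotic $\E[D_i(t)]=(1+o(1))a(t/i)^{\g}$ from that source. Your sketch fills in exactly those details---upgrading the asymptotic to uniform two-sided bounds via the Gamma-ratio expression and then summing---so there is nothing to compare.
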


We next derive a concentration results for the sum of degrees. Lemma \ref{ColLem1} is an elaboration of Lemma 2 in \cite{Colin}. 
Its proof, is in the appendix.
\\

\begin{lemma}\label{ColLem1}
Suppose $\d \geq 0$ and for a vertex $i \in [t]$, $i=i(t) \rightarrow \infty$. There exists a constant $K_0>0$ that depends only on $m$ and $\d$, such that the following holds for any constant $K>K_0$ and $h$ which is smaller than a constant that depends only on $m, \d$,
\begin{equation*}
\Pr\brac{S_i(t) < \frac{1}{K}\E[S_i(t)]}  \leq e^{-hi}
\end{equation*}
\end{lemma}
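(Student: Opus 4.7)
The plan is to build a positive martingale out of $(S_i(\tau))_{\tau \geq i}$ from the preferential-attachment update rule, and then to concentrate its endpoint by a Freedman-type inequality rather than plain Azuma--Hoeffding.

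First, I would compute the conditional one-step drift. From the edge-by-edge construction, the $j$th edge thrown at time $\tau+1$ lands in $[1,i]$ with probability $(S_i(\tau,j-1) + i\delta)/((2m+\delta)\tau + O(1))$, so summing across the $m$ sub-steps gives
\[
\E[\,S_i(\tau+1) + i\delta \mid \mathcal{F}_\tau\,] \;=\; (S_i(\tau) + i\delta)\bigl(1 + \tfrac{\gamma}{\tau} + O(\tfrac{1}{\tau^2})\bigr).
\]
Define $\alpha_\tau = \prod_{\tau'=i}^{\tau-1}\bigl(1 + \gamma/\tau' + O(1/\tau'^2)\bigr)$; comparing $\log\alpha_\tau$ to an integral shows $\alpha_\tau \asymp (\tau/i)^\gamma$, consistent with Proposition~\ref{CorToProp1}. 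Then $M_\tau := (S_i(\tau)+i\delta)/\alpha_\tau$ is a nonnegative martingale with $M_i = (2m+\delta)i$, and the event $\{S_i(t) < \E[S_i(t)]/K\}$ is contained in $\{M_t - M_i \leq -\lambda\}$ with $\lambda \asymp i$ for an appropriate $K_1 = K_1(K,m,\delta)$.

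Second, I would control the martingale increments. The raw increment is $M_{\tau+1} - M_\tau = (\Delta_\tau - (S_i(\tau)+i\delta)\gamma/\tau)/\alpha_{\tau+1}$, where $\Delta_\tau \in \{0,1,\ldots,m\}$ counts the edges at time $\tau+1$ landing in $[1,i]$. The per-step bound is $O((i/\tau)^\gamma) = O(1)$, and the crucial observation is that the conditional variance satisfies $\mathrm{Var}(M_{\tau+1} - M_\tau \mid \mathcal{F}_\tau) \lesssim \gamma M_\tau/(\tau\alpha_\tau)$. Under the assumption $\delta \geq 0$ we have $\gamma \leq \tfrac{1}{2}$, and the normalisation sum $\sum_{\tau \geq i}(\tau\alpha_\tau)^{-1} = O(i^\gamma \int_i^\infty \tau^{-1-\gamma}\,d\tau) = O(1)$ converges with a bound independent of $t$. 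Hence, on a trajectory along which $\sup_\tau M_\tau \leq CM_i$, the predictable quadratic variation is $O(CM_i) = O(Ci)$, and Freedman's inequality gives
\[
\Pr\bigl[\,M_t - M_i \leq -\lambda,\; \sup_\tau M_\tau \leq CM_i\,\bigr] \;\leq\; \exp\!\bigl(-\Omega(i^2/(Ci + i))\bigr) \;=\; \exp(-\Omega(i/C)),
\]
which is $\exp(-\Omega(i))$ for any fixed $C$.

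The main obstacle, and the key place where care is needed, is bounding $\Pr[\sup_{\tau \leq t} M_\tau > CM_i]$ by something of the form $e^{-hi}$: Doob's maximal inequality for the nonnegative martingale $M$ gives only $1/C$, which is useless here. I would handle this by an exponential-moment argument for the upper tail. Specifically, use an exponential supermartingale of the form $\exp\bigl(sM_\tau - \psi(s)\sum_{\tau' < \tau} M_{\tau'}/(\tau'\alpha_{\tau'})\bigr)$ for a suitable $\psi(s) \asymp s^2$, applied to the process stopped at the first upcrossing of $CM_i$; a Markov-in-exponential argument then yields $\Pr[\sup_\tau M_\tau > CM_i] \leq e^{-\Omega(i)}$ provided $C$ exceeds a constant depending only on $m, \delta$. (Equivalently, one can sum Freedman bounds over dyadic levels $2^kM_i$.) A union bound over the two events then yields $\Pr[M_t \leq M_i/K_1] \leq e^{-hi}$ for $h$ smaller than an $(m,\delta)$-dependent constant, which is the required conclusion.
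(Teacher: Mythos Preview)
Your approach is correct in outline but takes a considerably more circuitous route than the paper. The paper bypasses Freedman entirely and works directly with the Laplace transform: it sets $c_t \sim (t/i)^\gamma$ so that $Z_t/c_t$ is roughly centred, and then iterates the one-step bound
\[
\E\bigl[e^{-hZ_t/c_t}\bigr] \;\leq\; \E\Bigl[\exp\Bigl(-\tfrac{hZ_{t-1}}{c_{t-1}}\bigl(1 - \tfrac{h\gamma}{c_t t}\bigr)\Bigr)\Bigr],
\]
obtained from $e^{-x}\leq 1-x+x^2$ applied to each of the $m$ Bernoulli sub-steps. The accumulated error factor is $\prod_{j=i}^t(1 - h\gamma/(c_j j)) \geq 1 - O(h)$, using precisely the same convergent sum $\sum_{j\geq i}(jc_j)^{-1}=O(1)$ that you identify. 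This yields $\E[e^{-hZ_t/c_t}]\leq e^{-2hmi(1-O(h))}$ directly, and a single application of Markov's inequality finishes.

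The point is that for the \emph{lower} tail the Chernoff/Laplace iteration never needs pathwise control of $\sup_\tau M_\tau$: the quadratic correction is already absorbed into the $O(h)$ loss in the exponent. Your martingale--Freedman route forces you to deal with the upper tail of $\sup_\tau M_\tau$ separately, which you correctly flag as the delicate step and then handle with a second exponential-supermartingale argument. That works, but it is doing twice the labour for the same conclusion. What your framework buys, if anything, is a cleaner separation of drift and fluctuation that would generalise more readily to settings where the increments are less explicitly binomial; here, though, the paper's direct MGF recursion is both shorter and sharper.
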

\vspace{3mm}

\begin{lemma}\label{SumConcLem}
Let $i \in [t]$, $i \geq 1$ be a vertex and let $\varepsilon >0$ be a constant.  If $\d<0$, then there exists a positive 
constant $c=c(m,\d,\varepsilon)$ that depends only on $m$, $\d$ and $\varepsilon$, such that with probability at least $1-e^{-ci}$,
\begin{equation}
 S_i(t) \geq (1-\varepsilon)\E[S_i(t)].    \label{9jhs65h4djkq}
\end{equation}
\end{lemma}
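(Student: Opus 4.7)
My plan is to use a martingale argument tailored to the preferential-attachment dynamics, mirroring the approach used for Lemma~\ref{ColLem1} but crucially exploiting that $\delta < 0$ forces $\gamma > 1/2$.  First I would set $\alpha(t', j) := S_i(t', j) + i\delta$ --- the total \emph{weight} of $[1, i]$ under the attachment rule --- and observe that for $t' > i$ the source of every new edge lies outside $[1, i]$, so $\alpha$ is non-decreasing and is incremented by $1$ with conditional probability exactly $\alpha(t', j-1)/Z_{t', j}$, where $Z_{t', j} := (2m+\delta)(t'-1) + 2j - 1 + j\delta/m$.  This is precisely the dynamics of a two-colour P\'olya-type urn, so dividing out the deterministic product
\[ \psi(t', j) \;:=\; \prod_{(i, m) < (t'', j'') \le (t', j)}\!\bigl(1 + 1/Z_{t'', j''}\bigr) \]
turns $M(t', j) := \alpha(t', j)/\psi(t', j)$ into a martingale, with deterministic initial value $M(i, m) = \alpha(i, m) = (2m+\delta)\,i$ (since at time $(i, m)$ every one of the $mi$ edges has both endpoints in $[1, i]$, giving $S_i(i) = 2mi$).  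Proposition~\ref{CorToProp1} then pins down $\psi(t, m) \asymp (t/i)^\gamma$, matching $\E[S_i(t)] = M(i,m)\,\psi(t, m) - i\delta \asymp t^\gamma i^{1-\gamma}$.

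Next I would apply Azuma--Hoeffding to $M$.  A short calculation using $0 \le \alpha(t', j-1) \le Z_{t', j}$ gives the difference bound $|M(t', j) - M(t', j-1)| \le 1/\psi(t', j-1)$, and the sum of their squares is of order
\[ m\, i^{2\gamma} \sum_{t' > i}\! (t')^{-2\gamma} \;\lesssim\; \frac{m\, i}{2\gamma - 1}, \]
the tail converging precisely because $\gamma > 1/2$.  Azuma--Hoeffding then yields $\Pr\bigl(M(t, m) < (1 - \varepsilon)\,M(i, m)\bigr) \le e^{-c\,\varepsilon^2\,i}$ for a positive constant $c = c(m, \delta)$.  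On the complementary event one has $S_i(t) + i\delta \ge (1 - \varepsilon)(\E[S_i(t)] + i\delta)$, and since $\delta < 0$ the leftover term $-\varepsilon\, i\delta$ is positive, so~\eqref{9jhs65h4djkq} follows at once.

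The single nontrivial step --- and the only place where the hypothesis $\delta < 0$ is actually invoked --- is the summability of $\sum_{t' > i}(t')^{-2\gamma} \lesssim i^{1-2\gamma}/(2\gamma - 1)$: for $\gamma \le 1/2$ (i.e.\ $\delta \ge 0$) this tail grows with $t$ and the $e^{-ci}$ rate is destroyed, which is exactly why the companion Lemma~\ref{ColLem1} must settle for a crude constant multiplicative factor $1/K$ rather than the sharp $(1 - \varepsilon)$ available here.  The remaining ingredients --- verifying $\psi(t, m) \asymp (t/i)^\gamma$ and the uniform difference bound --- are routine product-to-integral estimates of the kind behind Proposition~\ref{CorToProp1}.
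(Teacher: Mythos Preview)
Your proposal is correct and essentially the same as the paper's proof: both apply Azuma--Hoeffding to the martingale associated with the weight $S_i(n)+i\delta$, and both pinpoint $\delta<0\Leftrightarrow\gamma>1/2$ as the reason the sum of squared differences is $O(i)$. The only difference is cosmetic---the paper uses the Doob martingale $\E[S_i(t)\mid\text{PA}_n]$ (computing it explicitly for $m=1$ and then lifting to general $m$), whereas you normalise the process directly by $\psi$; since the paper's Doob martingale equals $\psi(t,m)\cdot M(n)$, the two are the same martingale up to a deterministic scalar and the Azuma bounds coincide.
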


\begin{proof}
We will use a Doob martingale in conjunction with the Azuma-Hoeffding inequality. Define $M^{(m,\d)}_n(i,t)=\E[S_i(t)\mid \text{PA}_n(m,\d)]$. Observe, for $n=1,2,\ldots,i$, $M^{(m,\d)}_n(i,t)=\E[S_i(t)]$. 
Now we want to bound $|M^{(m,\d)}_{n+1}(i,t)-M^{(m,\d)}_n(i,t)|$ for $n \geq i$. Observe that $S_i(n)$ is measurable with respect to $\text{PA}_n(m,\d)$, and $\E[S_i(t)\mid S_i(n), \text{PA}_n(m,\d)]=\E[S_i(t)\mid S_i(n)]$, i.e., that the expectation of $S_i(t)$ is independent of $\text{PA}_n(m,\d)$ given $S_i(n)$. Hence, we will instead write $M^{(m,\d)}_n(i,t)=\E[S_i(t)\mid S_i(n)]$. We have, for $t>n$,
\begin{eqnarray*}
\E[S_i(t)+\d i\mid S_i(n)]&=&\E[\E[S_i(t)+\d i\mid S_i(t-1), S_i(n)]\mid S_i(n)]\\
&=&\E[\E[S_i(t)+\d i\mid S_i(t-1)]\mid S_i(n)].
\end{eqnarray*}
We will analyse the $m=1$ case first. Considering the inner conditional expectation,
\begin{eqnarray*}
\E[S_i(t)+\d i\mid S_i(t-1)] &=& S_i(t-1)+\d i + \frac{S_i(t-1)+\d i}{(2+\d)(t-1)+1+\d}\\
&=&\frac{(2+\d)t}{(2+\d)(t-1)+1+\d}\brac{S_i(t-1)+\d i}.
\end{eqnarray*}
Therefore,
\begin{eqnarray*}
\E[S_i(t)+\d i\mid S_i(n)]&=&\frac{t}{t-1+\frac{1+\d}{2+\d}}\E[S_i(t-1)+\d i\mid S_i(n)]\\
&=&\brac{S_i(n)+\d i}\prod_{k=n}^{t-1}\frac{k+1}{k+\frac{1+\d}{2+\d}}\\
&=&\brac{S_i(n)+\d i}\frac{\G(t+1)}{\G(t+\frac{1+\d}{2+\d})}\frac{\G(n+\frac{1+\d}{2+\d})}{\G(n+1)}.
\end{eqnarray*}

Consequently, 
\begin{eqnarray*}
&&\left|M^{(1,\d)}_{n+1}(i,t)-M^{(1,\d)}_n(i,t)\right|= \left|\E[S_i(t)\mid S_i(n+1)]-\E[S_i(t)\mid S_i(n)]\right|\\
&=& \frac{\G(t+1)}{\G(t+\frac{1+\d}{2+\d})}\left|\brac{S_i(n+1)+\d i}\frac{\G(n+1+\frac{1+\d}{2+\d})}{\G(n+2)}-\brac{S_i(n)+\d i}\frac{\G(n+\frac{1+\d}{2+\d})}{\G(n+1)}\right|\\
&=&\frac{\G(t+1)}{\G(t+\frac{1+\d}{2+\d})}\frac{\G(n+\frac{1+\d}{2+\d})}{\G(n+1)}\left|\brac{S_i(n+1)+\d i}\frac{n+\frac{1+\d}{2+\d}}{n+1} - \brac{S_i(n)+\d i}\right|.
\end{eqnarray*}

We have $\frac{n}{n+1} < \frac{n+\frac{1+\d}{2+\d}}{n+1} < 1$ and $S_i(n) \leq S_i(n+1) \leq S_i(n)+1$, so
\begin{eqnarray*}
\left|\brac{S_i(n+1)+\d i}\frac{n+\frac{1+\d}{2+\d}}{n+1} - \brac{S_i(n)+\d i}\right| &\leq& \brac{S_i(n)+\d i} \left|\frac{n+\frac{1+\d}{2+\d}}{n+1} - 1\right|+ \frac{n+\frac{1+\d}{2+\d}}{n+1}\\
&<&\frac{S_i(n)+\d i}{(2+\d)(n+1)}  + 1.
\end{eqnarray*}

Since $S_i(n)\leq 2i+n-i=n+i$ and $i \leq n$, the right-hand side is at most $2$:
\begin{equation*}
\frac{S_i(n)+\d i}{(2+\d)(n+1)} \leq \frac{n+i(1+\d)}{(2+\d)(n+1)} \leq \frac{n+n(1+\d)}{(2+\d)(n+1)}<1. 
\end{equation*}

Thus,
\begin{equation*}
\left|M^{(1,\d)}_{n+1}(i,t)-M^{(1,\d)}_n(i,t)\right| < 2 \frac{\G(t+1)}{\G(t+\frac{1+\d}{2+\d})}\frac{\G(n+\frac{1+\d}{2+\d})}{\G(n+1)}.
\end{equation*}

Recall that when $m \geq 1$ we define $\text{PA}_t(m,\d)$ in terms of $\text{PA}_{mt}(1,\d/m)$, and $S_a(b)$ in the former
corresponds to  $S_{ma}(mb)$ in the latter. Therefore, with $\g=\g(m,\d)=\frac{1}{2+\d/m}$,
\begin{eqnarray*}
\left|M^{(m,\d)}_{n+1}(i,t)-M^{(m,\d)}_n(i,t)\right|&=&\left|M^{(1,\d/m)}_{m(n+1)}(mi,mt)-M^{(1,\d/m)}_{mn}(mi,mt)\right|\\
&=& \left|\sum_{k=1}^{m}M^{(1,\d/m)}_{m(n+1)-k+1}(mi,mt)- M^{(1,\d/m)}_{m(n+1)-k}(mi,mt)\right|\\
&\leq&\sum_{k=1}^{m}\left|M^{(1,\d/m)}_{m(n+1)-k+1}(mi,mt)- M^{(1,\d/m)}_{m(n+1)-k}(mi,mt)\right|\\
&\leq&\frac{\G(mt+1)}{\G(mt+1-\g)}\sum_{k=1}^{m}\frac{\G(m(n+1)-k+1-\g)}{\G(m(n+1)-k+1)}.
\end{eqnarray*}

We have
\begin{eqnarray*}
\frac{\G(mn+k-\g)}{\G(mn+k)}&=&\frac{mn+k-1-\g}{mn+k-1}\frac{mn+k-2-\g}{mn+k-2} \ldots \frac{mn+1-\g}{mn+1}  \frac{\G(mn+1-\g)}{\G(mn+1)}\\
&\leq&\frac{\G(mn+1-\g)}{\G(mn+1)},
\end{eqnarray*}
so
\begin{equation*}
\sum_{k=1}^{m}\frac{\G(m(n+1)-k+1-\g)}{\G(m(n+1)-k+1)}=\sum_{k=1}^{m}\frac{\G(mn+k-\g)}{\G(mn+k)}\leq m \frac{\G(mn+1-\g)}{\G(mn+1)}.
\end{equation*}

Therefore,
\begin{equation*}
\left|M^{(m,\d)}_{n+1}(i,t)-M^{(m,\d)}_n(i,t)\right| \leq 2m\frac{\G(mt+1)}{\G(mt+1-\g)}\frac{\G(mn+1-\g)}{\G(mn+1)}.
\end{equation*}

Re-writing the above, we get 
\begin{eqnarray*}
\left|M^{(m,\d)}_{n+1}(i,t)-M^{(m,\d)}_n(i,t)\right|&\leq&  2m\frac{\G(mt+1-\g+\g)}{\G(mt+1-\g)}\frac{\G(mn+1-\g)}{\G(mn+1-\g+\g)}\\
&\leq& C_{m,\d}\bfrac{t}{n}^{\g},
\end{eqnarray*}
where $C_{m,\d}$ is a universal constant that depends only on $m$ and $\d$.

Now, applying the Hoeffding-Azuma inequality,
\begin{equation*}
\Pr\brac{S_i(t)-\E[S_i(t)]>d} \leq \exp\bfrac{-d^2}{C^2_{m,\d}\sum_{j=i+1}^{t}\bfrac{t}{j}^{2\g}}.
\end{equation*}

Since $\d<0$, we have $\sum_{j=i+1}^{t}\bfrac{t}{j}^{2\g}\leq K_1 t^{2\g}i^{1-2\g}$ for some  constant $K_1$.

Hence letting $d=\varepsilon\E[S_i(t)] \geq \varepsilon C_\ell t^{\g}i^{1-\g}$ for some constant $\varepsilon>0$,
\begin{equation*}
\Pr\brac{S_i(t)-\E[S_i(t)]>d} \leq \exp\bfrac{-\varepsilon^2C_\ell^2t^{2\g}i^{2(1-\g)}}{C^2_{m,\d}K_1t^{2\g}i^{1-2\g}} \leq e^{-ci}
\end{equation*}
for some constant $c=c(m,\d,\varepsilon)>0$ that depends only on $m$, $\d$ and $\varepsilon$.

\end{proof}

%========================================================================================================
%========================================================================================================
%========================================================================================================
%========================================================================================================

\section{Supercritical Case: Proof of Theorem~\ref{Supercritical case}}
The proof of this theorem relies on the fact that with high probability all of the early vertices of $\text{PA}_t(m,\d)$ become 
infected during the first round. Subsequently, the connectivity of the random graph is enough to spread the infection to the remaining 
vertices. The infection of the early vertices requires sufficiently high lower bounds on their degrees. We show these using the concentration 
results of the previous section together with a coupling with a P\'{o}lya urn process.

\subsection{P\'{o}lya Urns}\label{PolyaSection}

Consider the following P\'{o}lya urn process with red and black balls. Let $i \geq 2$ be an integer and let the weighting functions  for the red and black balls be  $W_R(k)=k+\d$ and $W_B(k)=k+(i-1)\d$, respectively. Under such a weighting scheme, if there are $a$ red balls and $b$ black balls, then the next time a ball is selected from the urn, the probability it is red is $\frac{W_R(a)}{W_R(a)+W_B(b)}=\frac{a+\d}{a+\d+b+(i-1)\d}=\frac{a+\d}{a+b+i\d}$. Whenever a ball is picked, it is placed back in the urn with another ball of the same colour. We can ask, if there are initially $a$ red and $b$ black balls, and we make $n$ selections, what is the probability that $d$ of those selections are red? 

To start with, one may calculate the probability of a particular sequence of $n$ outcomes. If  an $n$-sequence has $d$ reds followed by $n-d$ blues, then it has probability $p_{n,d,a,b}$ where 

\begin{align*}
p_{n,d,a,b} &= \frac{a+\d}{a+b+i\d}\frac{a+1+\d}{a+b+1+i\d}\ldots \frac{a+d-1+\d}{a+b+d-1+i\d}\\
&\qquad \times \frac{b+(i-1)\d}{a+b+d+i\d}\frac{b+1+(i-1)\d}{a+b+d+1+i\d}\ldots\frac{b+n-d-1+(i-1)\d}{a+b+n-1+i\d}\\
&=\frac{\G(a+d+\d)}{\G(a+\d)}\frac{\G(b+n-d+(i-1)\d)}{\G(b+(i-1)\d)}\frac{\G(a+b+i\d)}{\G(a+b+n+i\d)}.
\end{align*}

It is  not hard to see that this is the same probability for any $n$-sequence with $d$ reds and $n-d$ blues, regardless of ordering (this is the \emph{exchangeability} property of the P\'{o}lya urn process). As such, letting $X_R(n,a,b)$ be the number of reds picked when $n$ selections are made, we have 

\begin{equation}
\Pr(X_R(n,a,b)=d) = \binom{n}{d}p_{n,d,a,b}= \binom{n}{d}\frac{\G(a+d+\d)}{\G(a+\d)}\frac{\G(b+n-d+(i-1)\d)}{\G(b+(i-1)\d)}\frac{\G(a+b+i\d)}{\G(a+b+n+i\d)}. \label{polyaprob}
\end{equation}

Now let $i \geq 2$ and consider the vertices $[1,i]$ in $\brac{\text{PA}_t(m,\d)}_{t=i}^{\infty}$. With every vertex $t=i+1, i+2, \ldots$, there are $m$ edges  created, some of which may connect to vertices in $[1,i]$. We ask, what is the probability that an edge connects to $i$, given that it connects to some vertex in $[1,i]$? A coupling with the above  P\'{o}lya urn process is immediate: after the creation of $\text{PA}_i(m,\d)$, we create an urn with $D_i(i)$ red balls and $2mi-D_i(i)$ black balls. Every time a vertex $t>i$ connects an edge into the interval $[1,i]$, a selection is made in the urn process. A red ball is chosen if and only if the edge connects to $i$. 

To demonstrate that the probabilities correspond, suppose in $\text{PA}_{t,j-1}(m,\d)$ we have $D_i(t,j-1)=a$. Denoting $S_{i-1}(t,j-1)=\sum_{k=1}^{i-1}D_k(t,j-1)$, suppose also $S_{i-1}(t,j-1)=b$. Then it is easily checked that $\Pr\brac{g(t,j)=i \mid g(t,j) \in [1,i]}=\frac{a+\d}{a+b+i\d}$. Hence, if in $\text{PA}_t(m,\d)$ there are $n$ edges with one end in $[1,i]$ and the other end in $[i+1,t]$, then the probability that $d$ of those edges are attached to vertex $i$ is given by \ref{polyaprob}. As such, we have the following the proposition. 
\\

\begin{proposition}\label{PolyPAConnectProp}
Let $m\geq 1, i \geq 2$ be integers and let $\d>-m$ be a real. Suppose a P\'{o}lya urn process starts with $a \leq 2m$ red and $b=2mi-a$ black balls, and has weighting functions $W_R(k)=k+\d$ and $W_B(k)=k+(i-1)\d$ for the red and black balls, respectively. Let the random variable $X_R(n,a) = X_R(n,a,2mi-a)$ count the total number of red choices after $n$ selections have been made. Furthermore, consider a random graph $\text{PA}_t(m,\d)$. If $t \geq i$, then for $0 \leq d \leq n$,

\begin{equation*}
\Pr\brac{D_i(t)=d+a \mid S_i(t)-2mi=n, D_i(i)=a}=\Pr(X_R(n,a)=d).
\end{equation*}

\end{proposition}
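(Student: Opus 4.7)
The plan is to construct a sub-step-by-sub-step coupling between the evolution of $\PA_t(m,\d)$ starting from $\PA_i(m,\d)$ and a P\'olya urn, in such a way that each new edge endpoint which lands inside $[1,i]$ corresponds to exactly one draw from the urn. First, I note that $\PA_i(m,\d)$ has exactly $mi$ edges, so $S_i(i)=2mi$ deterministically; combined with the conditioning $D_i(i)=a$, this forces $S_{i-1}(i)=2mi-a=b$, and so the graph state at time $i$ matches the urn initialised with $a$ red and $b$ black balls under the weightings $W_R(k)=k+\d$, $W_B(k)=k+(i-1)\d$.

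Second, for each subsequent sub-step (the $j$th sub-step of some time $t'>i$), the transition rule of the alternative construction gives, uniformly for every $k\in[1,i]$,
$$\Pr\brac{g(t',j)=k \mid \PA_{t',j-1}} = \frac{D_k(t',j-1)+\d}{(2m+\d)(t'-1)+2j-1+j\d/m},$$
so the common denominator cancels when we condition on $g(t',j)\in[1,i]$, leaving
$$\Pr\brac{g(t',j)=i \mid g(t',j)\in[1,i],\ \PA_{t',j-1}} = \frac{D_i(t',j-1)+\d}{S_i(t',j-1)+i\d} = \frac{a'+\d}{a'+b'+i\d},$$
where $a'=D_i(t',j-1)$ and $b'=S_{i-1}(t',j-1)$. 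This is exactly the probability of drawing a red ball from the urn in state $(a',b')$ under the specified weightings. Furthermore, an edge landing on $i$ increments $D_i$ and leaves $S_{i-1}$ unchanged, whereas an edge landing in $[1,i-1]$ increments $S_{i-1}$ and leaves $D_i$ alone; this reproduces the P\'olya reinforcement rule exactly.

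Third, to incorporate the conditioning on $S_i(t)-2mi=n$, I exploit the key observation that the pair $(D_i,S_{i-1})$ is frozen on every sub-step whose edge does not fall in $[1,i]$. Let $T_1<T_2<\cdots<T_n$ be the (random) sub-steps at which the new edge endpoint lands in $[1,i]$, and set $Y_k:=\Ind[g \text{ at sub-step } T_k \text{ equals } i]$. The second step shows that, conditional on $(T_1,\dots,T_n)$, the sequence $(Y_1,\dots,Y_n)$ is distributed exactly as $n$ consecutive outcomes of the urn started from $(a,b)$. Since $D_i(t)-a = \sum_{k=1}^n Y_k$, marginalising over the admissible placements of $(T_1,\dots,T_n)$ yields
$$\Pr\brac{D_i(t)=d+a \mid S_i(t)-2mi=n,\ D_i(i)=a} = \Pr(X_R(n,a)=d).$$

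The only point that actually requires care — and the main (minor) obstacle — is verifying that conditioning on the \emph{aggregate} event $\{S_i(t)-2mi=n\}$ does not secretly bias the individual hit-on-$i$ decisions. This is precisely what the third step rules out: the urn dynamics depend only on the urn's internal state at the moments $T_k$ and not on how those moments are interleaved among the remaining $m(t-i)-n$ sub-steps performed after time $i$, so the interleaving information can be integrated out cleanly.
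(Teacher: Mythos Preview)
Your proof is correct and follows essentially the same coupling as the paper: initialise the urn with $(D_i(i),\,2mi-D_i(i))$ balls, and match each edge from a vertex $>i$ landing in $[1,i]$ to one urn draw, checking that the conditional probability $\Pr(g(t',j)=i\mid g(t',j)\in[1,i])=\frac{a'+\d}{a'+b'+i\d}$ agrees with the red-ball probability. Your third step, showing that the hit-time process $H$ evolves autonomously (since $S_i(\tau^-)=2mi+\sum_{\tau'<\tau}H_{\tau'}$ does not depend on the $Y_k$'s) and hence that conditioning on $\{S_i(t)-2mi=n\}$ leaves the urn law of $(Y_1,\dots,Y_n)$ intact, makes explicit a point the paper passes over in a single sentence.
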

The following lemma will be used to bound individual vertex degrees. 
\\

\begin{lemma} \label{polyaLemm}
Let $X_R(n,a)$ be the random variable defined in Proposition \ref{PolyPAConnectProp} and let $I=i(2m+\d)-1$. 
Then for $1 \leq d \leq n$,
\begin{equation}
\Pr(X_R(n,a)=d) \overset{(m,\d)}\leq   \frac{1}{d}\bfrac{Id}{I+n-d}^{a+\d}e^{-\frac{dI}{I+n}} ,  \label{idjgoosijh}
\end{equation}
and
\begin{equation}
\Pr(X_R(n,a)=0) \leq  \bfrac{I}{I+n}^{a+\d}. \label{h68h4gghjgkrweqw}
\end{equation}
\end{lemma}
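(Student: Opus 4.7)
The plan is to start from the closed-form expression~\eqref{polyaprob}. Writing $\alpha := a+\d$ and using $a+b+i\d = I+1$ together with $b+(i-1)\d = I+1-\alpha$, that formula becomes
\begin{equation*}
\Pr(X_R(n,a)=d) \;=\; \binom{n}{d}\,\frac{\Gamma(\alpha+d)}{\Gamma(\alpha)}\cdot\frac{\Gamma(I+n-d+1-\alpha)\,\Gamma(I+1)}{\Gamma(I+1-\alpha)\,\Gamma(I+n+1)}.
\end{equation*}

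For \eqref{h68h4gghjgkrweqw} (the case $d=0$) the first Gamma ratio equals $1$, and expanding the remaining Gamma ratios as telescoping products yields $\Pr(X_R(n,a)=0) = \prod_{k=1}^{n}(1-\alpha/(I+k))$. To obtain $(I/(I+n))^\alpha$ as an upper bound I would use concavity of $\alpha\mapsto f(\alpha):=\log\prod_{k=1}^n(1-\alpha/(I+k))$: direct telescoping gives $f(0)=0$ and $f(1)=\log(I/(I+n))$, which coincide at $\alpha=0,1$ with $g(\alpha):=\alpha\log(I/(I+n))$. Since $f$ is concave in $\alpha$ and $g$ is linear, the inequality $f'(1)\leq g'(1)$ (which reduces to the standard bound $\sum_{k=0}^{n-1}(I+k)^{-1}\geq \log((I+n)/I)$) yields $f(\alpha)\leq g(\alpha)$ for all $\alpha\geq 1$, the range relevant to the application.

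For \eqref{idjgoosijh} (the case $d\geq 1$) the strategy is to reshuffle the factors so each piece of the target bound is visible. Expanding one ratio at a time,
\begin{align*}
\binom{n}{d}\,\frac{\Gamma(\alpha+d)}{\Gamma(\alpha)} &= \prod_{k=0}^{d-1}(n-k)\,\cdot\,\binom{\alpha+d-1}{d},\\
\frac{\Gamma(I+n-d+1-\alpha)\,\Gamma(I+1)}{\Gamma(I+1-\alpha)\,\Gamma(I+n+1)} &= \prod_{k=1}^{n-d}\!\left(1-\frac{\alpha}{I+k}\right)\,\cdot\,\frac{\Gamma(I+n-d+1)}{\Gamma(I+n+1)}.
\end{align*}
The purely combinatorial pieces combine nicely:
\begin{equation*}
\prod_{k=0}^{d-1}(n-k)\,\cdot\,\frac{\Gamma(I+n-d+1)}{\Gamma(I+n+1)} \;=\; \prod_{k=0}^{d-1}\frac{n-k}{I+n-k} \;\leq\; \left(\frac{n}{I+n}\right)^{d} \;\leq\; e^{-dI/(I+n)},
\end{equation*}
the first inequality since each factor $(n-k)/(I+n-k)=1-I/(I+n-k)$ is maximized at $k=0$, and the second by $1-x\leq e^{-x}$ applied to $x=I/(I+n)$. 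The surviving $\alpha$-product $\prod_{k=1}^{n-d}(1-\alpha/(I+k))$ is bounded by $C(m,\d)(I/(I+n-d))^\alpha$ by exactly the same argument as in the $d=0$ case, and Stirling's formula (in the form $\Gamma(\alpha+d)/\Gamma(d+1)\sim d^{\alpha-1}/\Gamma(\alpha)$) gives $\binom{\alpha+d-1}{d}\leq C(m,\d)\,d^{\alpha-1}$ for all $d\geq 1$. Multiplying these estimates together and writing $d^{\alpha-1}=d^\alpha/d$ produces $\tfrac{1}{d}(Id/(I+n-d))^\alpha e^{-dI/(I+n)}$ up to an $(m,\d)$-constant, matching \eqref{idjgoosijh}.

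The main obstacle is securing the constant-free bound $(I/(I+n))^\alpha$ in the $d=0$ case: the naive chain $\log(1-x)\leq -x$ together with Riemann summation only yields $((I+1)/(I+n+1))^\alpha$, which is strictly larger than the claim, so the concavity argument above is genuinely needed (and indeed the inequality fails for $\alpha<1$, so one really does exploit $\alpha\geq 1$). A secondary concern is pinning down the constant in $\binom{\alpha+d-1}{d}\leq C(m,\d)\,d^{\alpha-1}$, which requires separate treatment of small $d$ and of the regimes $\alpha<1$ and $\alpha\geq 1$; this is routine but fiddly.
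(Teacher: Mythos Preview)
Your argument is correct and follows essentially the same route as the paper: the same rewriting via $I+1=a+b+i\delta$, the same Stirling-type bound $\Gamma(\alpha+d)/d!\lesssim d^{\alpha-1}$, the same estimate $\prod_{k=0}^{d-1}(n-k)/(I+n-k)\leq (n/(I+n))^d\leq e^{-dI/(I+n)}$, and the same Gamma-ratio bounds for the remaining factors. Your organisation is arguably a little cleaner in that you isolate the telescoping product $\prod_{k=1}^{n-d}(1-\alpha/(I+k))$ explicitly rather than appealing twice to the asymptotic \eqref{useful2}.

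The one substantive difference is the $d=0$ case. The paper's own proof there only establishes the inequality with $\lesssim$ (that is, up to an $(m,\delta)$-constant), whereas the statement as printed claims a constant-free $\leq$. Your concavity argument is a genuinely nice way to recover the constant-free bound when $\alpha=a+\delta\geq 1$, and your observation that the inequality reverses for $0<\alpha<1$ is correct --- so \eqref{h68h4gghjgkrweqw} as stated is in fact slightly too strong. One small over-claim on your side: it is not the case that $\alpha\geq 1$ is automatic in the application (for instance $m=3$, $\delta=-2.5$, $a=m$ gives $\alpha=0.5$). But since the proof of Theorem~\ref{Supercritical case} only ever uses the $\overset{(m,\delta)}{\leq}$ version of both bounds, this discrepancy is harmless.
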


\begin{proof}
As per Equation \eqref{polyaprob}, 
\[
\Pr(X_R(n,a)=d) = \binom{n}{d}\frac{\G(a+d+\d)}{\G(a+\d)}\frac{\G(b+n-d+(i-1)\d)}{\G(b+(i-1)\d)}\frac{\G(a+b+i\d)}{\G(a+b+n+i\d)}.
\]
That is, since $a+b =2mi$ and $a+b+i\d = i (2m + \d)$, we have 
\begin{equation*}
\Pr(X_R(n,a)=d)=\binom{n}{d}\frac{\G(a+\d+d)}{\G(a+\d)}\frac{\G(i(2m+\d)+n-(a+\d + d))}{\G(i(2m+\d)-(a+\d))}
\frac{\G(i(2m+\d))}{\G(i(2m+\d)+n)} %\label{fd4f8sg4}
\end{equation*}

We re-write the above as 
\begin{equation}
\Pr(X_R(n,a)=d)=\binom{n}{d}\frac{\G(a+\d+d)}{\G(a+\d)}\frac{\G(I+1+n-(a+\d + d))}{\G(I+1-(a+\d))}\frac{\G(I+1)}{\G(I+1+n)}.\label{jidjfog58748} 
\end{equation}

Suppose first that $d>0$. We can write the above as 
\begin{equation}
\Pr(X_R(n,a)=d)=\frac{\G(a+\d+d)}{d!\G(a+\d)}\frac{\G(I+1)}{\G(I+1-(a+\d))}\frac{(n)_d\G(I+1+n-(a+\d + d))}{\G(I+1+n)} \label{7j56fg}
\end{equation}
($(n)_d$  denotes the falling factorial $(n)_d=n(n-1)\ldots(n-d+1)$).

Now we bound \eqref{7j56fg}: using (essentially) Stirling's formula (\eqref{useful} in the appendix) observe  that $\G(a+\d+d) \overset{(m,\d)}\leq e^{-(a+\d+d-1)}(a+\d+d-1)^{a+\d+d-\frac{1}{2}}$. Furthermore, $d! \geq d^{d+\frac{1}{2}}e^{-d}$, so

\begin{eqnarray}
\frac{\G(a+\d+d)}{d!\G(a+\d)} &\overset{(m,\d)}\leq& \frac{e^{-(a+\d+d-1)}(a+\d+d-1)^{a+\d+d-\frac{1}{2}}}{d^{d+\frac{1}{2}}e^{-d}}\nonumber \\
&=&e^{-(a+\d-1)}(a+\d+d-1)^{a+\d-1}\bfrac{a+\d+d-1}{d}^{d+\frac{1}{2}}\nonumber \\
&\overset{(m,\d)}\leq & (a+\d+d-1)^{a+\d-1}\nonumber\\
 &\overset{(m,\d)}\leq & d^{a+\d-1}. \nonumber%\label{h5g4asf}
\end{eqnarray}

Also by \eqref{useful2}, $\frac{\G(I+1))}{\G(I+1-(a+\d))} \overset{(m,\d)}\leq I^{a+\d}$, and so  

\begin{equation}
\frac{\G(a+\d+d)}{d!\G(a+\d)}\frac{\G(I+1)}{\G(I+1-(a+\d))} \overset{(m,\d)}\leq  \frac{1}{d}(Id)^{a+\d}. \label{6484duju}
\end{equation}

Now,
\begin{equation*}
\frac{(n)_d\G(I+1+n-(a+\d + d))}{\G(I+1+n)} = \frac{n}{I+n}\frac{n-1}{I+n-1}\ldots\frac{n-(d-1)}{I+n-(d-1)}\frac{\G(I+1+n-(a+\d + d))}{\G(I+n-(d-1))}.
\end{equation*}

We have 
\[
\frac{n}{I+n}\frac{n-1}{I+n-1}\ldots\frac{n-(d-1)}{I+n-(d-1)} \leq \bfrac{n}{I+n}^d \leq e^{-\frac{dI}{I+n}}.
\]

Furthermore,
\begin{equation*}
\frac{\G(I+1+n-(a+\d+d))}{\G(I+n-(d-1))} \overset{(m,\d)}\leq\frac{1}{(I+n-d)^{a+\d}}.
\end{equation*}

Consequently, we have the following bound:

\begin{equation*}
\Pr(X_R(n,a)=d) \overset{(m,\d)}\leq \frac{1}{d} \bfrac{Id}{I+n-d}^{a+\d}e^{-\frac{dI}{I+n}}.
\end{equation*}

Now suppose $d=0$, then going back to \eqref{jidjfog58748} we have
\begin{equation*}
\Pr(X_R(n,a)=0)=\frac{\G(I+1)}{\G(I+1-(a+\d))}\frac{\G(I+1+n-(a+\d))}{\G(I+1+n)} \lesssim \bfrac{I}{I+n}^{a+\d}. 
\end{equation*}

\end{proof}

%========================================================================================================================
%========================================================================================================================
%========================================================================================================================

\begin{proof}[\textbf{Proof of Theorem \ref{Supercritical case}}]
For convenience, we rewrite as $a(t)=\omega^{10}a_c(t)$ where $\omega=\omega(t) \rightarrow \infty$ arbitrarily slowly (we can assume $\omega \leq \log t$, since if not, we can just substitute $\log t$ for it and get full infection \whp; a larger $\omega$ can only increase the probability of this happening). 

Let $\k=\lceil\omega^{1+\d/m}\rceil$ and choose $[\k]$ as a core. We wish to show all vertices in the core are infected for this $a(t)$.

For $\d \geq 0$,  we apply Lemma \ref{ColLem1}, setting $h=\frac{\log \k}{\k}$ , so that for some constant $K_\ell$, we have
$S_\k(t) \geq K_\ell t^\g \k^{1-\g}$ \whp. 
For $\d<0$, we apply Lemma \ref{SumConcLem} to get the same result. We set $n=n_\k(t)=K_\ell t^\g \k^{1-\g}-2m\k$. 

Now we wish to show that \whp, $D_i(t) \geq \bfrac{t}{\omega^{1+\d/m}}^{\g}\frac{1}{z}$ over all $i \in [\k]$, for some appropriately chosen $z=z(t) \rightarrow \infty$.
Applying Lemma \ref{polyaLemm}  with $I=\k(2m+\d)-1$,
\begin{eqnarray*}
\Pr\brac{X_R(n,a) \leq \frac{n}{\k z}} &=&\sum_{d=0}^{n/(\k z)}\Pr(X_R(n,a)=d)\\
&\lesssim& \bfrac{I}{I+n}^{a+\d}+ \sum_{d=1}^{n/(\k z)} \bfrac{I}{I+n-d}^{a+\d}d^{a+\d-1}e^{-\frac{dI}{I+n}}\\
&\leq & \bfrac{I}{I+n}^{a+\d}+  \frac{I^{a+\d}}{(I+n-n/(\k z))^{a+\d}}\sum_{d=0}^{n/(\k z)} d^{a+\d-1}.
\end{eqnarray*}

Since $\k \rightarrow \infty$ and $z \rightarrow \infty$ as $t \rightarrow \infty$, we have $n/(\k z)=o(n)$, so $\frac{1}{(I+n-n/(\k z))^{a+\d}}\lesssim \frac{1}{(I+n)^{a+\d}}$. 

Furthermore,
\[
\sum_{d=0}^{n/(\k z)} d^{a+\d-1} \lesssim \int_0^{n/(\k z)} x^{a+\d-1}  \,\mathrm{d}x \leq \frac{1}{a+\d}\bfrac{n}{\k z}^{a+\d}.
\]
Hence,
\begin{equation*}
\Pr\brac{X_R(n,a) \leq \frac{n}{\k z}} \lesssim \bfrac{I}{I+n}^{a+\d}+ \bfrac{I}{I+n}^{a+\d}\bfrac{n}{\k z}^{a+\d}\lesssim \bfrac{I}{I+n}^{a+\d}+ \frac{1}{z^{a+\d}} \leq 
\frac{1}{z^{m+\d }}.
\end{equation*}
We choose $z=\omega^2$. Then
\begin{equation*}
\bfrac{I}{I+n}^{a+\d}=\bfrac{\k(2m+\d-1)}{\k(2m+\d-1)+  K_\ell t^\g \k^{1-\g}-2m\k}^{a+\d}
\lesssim \bfrac{\omega^{1+\d/m}}{t}^{\g(a+\d)}
=o\bfrac{1}{z^{a+\d }}.
\end{equation*}

Thus, 
\begin{equation*}
\Pr\brac{X_R(n,a) \leq \frac{n}{\k z}} \lesssim \frac{1}{z^{m+\d }} \, .
\end{equation*}

Taking a union bound over all vertices in $[\k]$, we have a probability asymptotically bounded by $\bfrac{\omega}{z^m}^{1+\d/m}=o(1)$.

So given  $D_i(t) \geq \bfrac{t}{\omega^{1+\d/m}}^{\g}\frac{1}{\omega^2}$ for each $i \in [\k]$, we calculate the expectation of the number of infected neighbours a vertex in the core has. This would be at least
\[
\frac{a(t)}{2mt} \bfrac{t}{\omega^{1+\d/m}}^{\g}\frac{1}{\omega^2}= \frac{\omega^{8}}{2m}\bfrac{1}{\omega^{1+\d/m}}^{\g} \geq \omega^7
\]
for large enough $t$.

To calculate the probability that that at least $r$ neighbours are infected for a fixed vertex $i$ in the core, we bound the corresponding binomial random variable. 
Suppose $N=N(t)\rightarrow \infty$, $p=p(t)\rightarrow 0$ but  $Np \rightarrow \infty$. Then for large enough $t$, $\Pr(\text{Bin}(N,p) < r) \leq e^{-Np/2}$.   

Therefore, 
\[
\Pr\brac{\text{Bin}\brac{D_i(t),\frac{a(t)}{t}} < r \mid D_i(t) \geq \bfrac{t}{\omega^{1+\d/m}}^{\g}\frac{1}{\omega^2}} \leq e^{-\omega^7/2}
\]
and so the probability that any of the core vertices fail to be infected is at most $\omega^{1+\d/m}e^{-\omega^7/2} \leq e^{-\omega^6}$, for large enough $t$.

Thus, at this stage, we have proved that the core vertices, i.e., those in $[\k]$, all get infected \whp. 
If  no vertex outside the core has more than a single self-loop, then each vertex will have at least $m-1$ forward (i.e., out-going) edges. Hence, if $r\leq m-1$, the entire graph will be infected if the core is. We show that no vertex outside the core has more than one self-loop.

The probability that vertex $i$ outside the core has at least two self loops is at most $2\binom{m}{2}i^{-2}$. Summing over all $i \in [\k+1,t]$, this is  $O\brac{\int_{\omega^{1+\d/m}}^t i^{-2}\, \mathrm{d}i}= O\brac{1/\omega^{1+\d/m}}=o(1)$.  Hence, \whp, no vertex outside the core has more than one self-loop. So if $r\leq m-1$, the graph entire graph gets infected \whp.
\end{proof}

%========================================================================================================================
%========================================================================================================================
%========================================================================================================================

\section{Subcritical Case: Proof of Theorem~\ref{Subcritical case}}
The general proof strategy of Theorem~\ref{Subcritical case} is based on the following argument. 
Suppose that a vertex $i$ is not infected at round $\t=0$, but it is infected at round $\t=1$. Then there must be $r$ edges connected to 
$i$ that also connect to vertices infected in round $\t=0$. Assuming that these edges connect to different neighbours, we have a depth-$1$
tree. Similarly, if $i$ gets infected in round $\t=d$, then there must be some underlying \emph{witness structure} which caused this.
In particular, it may be that there is an $r$-ary tree of depth $d$ wherein in round $\t=0$ all the leaves are 
infected  and no internal vertices are. We call this a \emph{witness tree}. More generally, such a structure may contain cycles. 
We shall deal with witness trees first before addressing more general witness structures. We use a first moment argument to show that 
witness structures of a certain depth do not exist \whp. 
Before doing so, we need to develop the estimates that will allow us to bound the number of occurrences of a certain graph 
as a subgraph of $\PA_t$.

We revert to the model $\PA_t(1,\d)$, which, for notational convenience, we shall write as $\PA_t$. We have $\g=\frac{1}{2+\d}$. 
\\

We begin by defining a sequence of polynomials $(Q_n(x))_{n \geq 1}$ where $Q_1(x)=x$ and $Q_{n+1}(x)=Q_n(x)(x+n)$ for $n \geq 1$. 
\\

\begin{lemma}\label{QLem}
Let $X_t$ be a random variable measurable with respect to $\PA_t$. Then,
\begin{equation*}
\E[X_{t-1}Q_n(D_i(t)+\d)]=\E[X_{t-1}Q_n(D_i(t-1)+\d)]\frac{t+(n-1)\g}{t-\g}.
\end{equation*}
\end{lemma}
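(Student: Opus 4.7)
The plan is to condition on $\PA_{t-1}$ and use the tower property. Since $X_{t-1}$ is measurable with respect to $\PA_{t-1}$, we can write
\[
\E[X_{t-1}Q_n(D_i(t)+\d)] = \E\bigl[X_{t-1}\,\E[Q_n(D_i(t)+\d) \mid \PA_{t-1}]\bigr],
\]
so the whole task reduces to showing that the inner conditional expectation equals $Q_n(D_i(t-1)+\d) \cdot \frac{t+(n-1)\g}{t-\g}$.

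To compute the conditional expectation, I would use the one-step transition rule for $\PA_t(1,\d)$. Going from $\PA_{t-1}$ to $\PA_t$, the single new edge attaches to vertex $i \in [t-1]$ with probability $p := (D_i(t-1)+\d)/((t-1)(2+\d)+(1+\d))$, and otherwise $D_i(t) = D_i(t-1)$. Therefore
\[
\E[Q_n(D_i(t)+\d)\mid \PA_{t-1}] = (1-p)\,Q_n(D_i(t-1)+\d) + p\,Q_n(D_i(t-1)+1+\d).
\]

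The next step is the algebraic heart of the argument. From the recursion $Q_{n+1}(x) = Q_n(x)(x+n)$ one sees that $Q_n(x) = x(x+1)\cdots(x+n-1)$, a rising factorial, and hence
\[
Q_n(x+1) - Q_n(x) = Q_n(x)\cdot \frac{n}{x}.
\]
Applying this with $x = D_i(t-1)+\d$, the factor $D_i(t-1)+\d$ in the numerator of $p$ cancels exactly with the $x$ in the denominator above, which is the key reason the lemma gives a clean multiplicative factor independent of $D_i(t-1)$. Substituting back,
\[
\E[Q_n(D_i(t)+\d)\mid \PA_{t-1}] = Q_n(D_i(t-1)+\d)\left(1 + \frac{n}{(t-1)(2+\d)+(1+\d)}\right).
\]

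The remaining step is bookkeeping: simplify $(t-1)(2+\d)+(1+\d) = t(2+\d)-1$, so the parenthesized factor becomes $\frac{t(2+\d)-1+n}{t(2+\d)-1}$; dividing top and bottom by $2+\d$ and using $\g = 1/(2+\d)$ gives $\frac{t-\g+n\g}{t-\g} = \frac{t+(n-1)\g}{t-\g}$. Plugging this back into the tower expression yields the claim. I don't expect a real obstacle here: the main thing to be careful about is the exact form of the denominator in the attachment probability (the $+1+\d$ coming from the self-loop option), since mis-tracking it would spoil the neat form of the final factor.
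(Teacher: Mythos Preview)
Your proof is correct and follows essentially the same route as the paper: condition on $\PA_{t-1}$, apply the one-step attachment rule, use the rising-factorial identity $Q_n(x+1)=Q_n(x)\,(x+n)/x$ to cancel $D_i(t-1)+\d$ against the numerator of the attachment probability, and simplify the denominator $(2+\d)t-1$ via $\g=1/(2+\d)$. The only cosmetic difference is that the paper writes the identity in the form $Q_n(x+1)=Q_n(x)\,\frac{x+n}{x}$ rather than as a difference, but the computation is the same.
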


\begin{proof}
\begin{align*}
&\E[X_{t-1}Q_n(D_i(t)+\d) \mid \PA_{t-1}]\\
&\quad = X_{t-1}\left[ \brac{1-\frac{D_i(t)+\d}{(2+\d)t-1}} Q_n(D_i(t-1)+\d)+ \frac{D_i(t)+\d}{(2+\d)t-1}Q_n(D_i(t-1)+\d+1)\right]\\
&\quad = X_{t-1}\left[Q_n(D_i(t-1)+\d)\left[ 1-\frac{D_i(t)+\d}{(2+\d)t-1} + \frac{D_i(t)+\d+n}{(2+\d)t-1}\right]\right].
\end{align*}
Now we take expectations on both sides and the lemma follows. 
\end{proof}

\begin{lemma}\label{UBProbLemma}
Suppose  $i<j_1, j_2, \ldots, j_k$ are vertices in $\PA_t(m,\d)$. Then
\begin{equation*}
\Pr\brac{j_1 \rightarrow i \cap j_2 \rightarrow i \cap \ldots \cap j_k \rightarrow i } \leq M^k \frac{1}{i^\g j_1^{1-\g}}\frac{1}{i^\g j_2^{1-\g}}\ldots\frac{1}{i^\g j_k^{1-\g}}
\end{equation*}
where $M=M(m, \d)$ is a constant that depends only on $m$ and $\d$. 
\end{lemma}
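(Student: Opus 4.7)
The plan is to set up an iterated conditioning that repeatedly invokes Lemma~\ref{QLem} and raises the degree of the polynomial $Q_n$ by one each time an incoming edge to $i$ is peeled off. Working in $\PA_t(1,\d)$ (the setting of this section), order $j_1 < j_2 < \cdots < j_k$ without loss of generality, write $I_\ell := \mathbf{1}\{j_\ell \to i\}$, and set $Z_s := (2+\d)(s-1) + 1 + \d$, which is $\asymp s$. Note $\E[I_\ell \mid \PA_{j_\ell-1}] = (D_i(j_\ell - 1)+\d)/Z_{j_\ell}$. For $\ell \geq 0$ and $n \geq 0$, define the auxiliary quantities
\[
A_\ell(n) \;:=\; \E\!\left[I_1 \cdots I_\ell \, Q_n(D_i(j_\ell)+\d)\right],
\]
with the conventions $j_0 := i$ and $Q_0 \equiv 1$; the quantity we want to bound is $A_k(0)$.

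The engine of the proof is the one-step recursion
\[
A_\ell(n) \;=\; \frac{\nu_{n+1}(j_{\ell-1},\, j_\ell - 1)}{Z_{j_\ell}}\, A_{\ell-1}(n+1), \qquad \ell \geq 1,\ n \geq 0,
\]
where $\nu_n(s,t) := \prod_{r=s+1}^{t}(r+(n-1)\g)/(r-\g)$. To establish it, first condition on $\PA_{j_\ell - 1}$: on $\{I_\ell = 1\}$ the degree increments, so $D_i(j_\ell)+\d = D_i(j_\ell - 1)+\d+1$, and the algebraic identity $Q_n(x+1) = Q_{n+1}(x)/x$ combines with the conditional probability of $I_\ell$ to cancel one factor of $D_i(j_\ell-1)+\d$, yielding
\[
\E[I_\ell\, Q_n(D_i(j_\ell)+\d) \mid \PA_{j_\ell-1}] \;=\; Q_{n+1}(D_i(j_\ell-1)+\d)/Z_{j_\ell}.
\]
Then iterating Lemma~\ref{QLem} from time $j_\ell - 1$ back to $j_{\ell-1}$ transports $Q_{n+1}(D_i(\cdot)+\d)$ backwards in time, producing the factor $\nu_{n+1}(j_{\ell-1}, j_\ell - 1)$ and bringing the expectation into the form $A_{\ell-1}(n+1)$.

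Unrolling the recursion $k$ times yields the exact identity
\[
\Pr\!\left(\bigcap_{\ell=1}^k\{j_\ell \to i\}\right) \;=\; \E[Q_k(D_i(i)+\d)] \cdot \prod_{s=1}^k \frac{\nu_{k-s+1}(j_{s-1},\, j_s - 1)}{Z_{j_s}}.
\]
To finish, I would bound $\E[Q_k(D_i(i)+\d)] \leq Q_k(2+\d)$ (using $D_i(i)\leq 2$ in $\PA_t(1,\d)$), use $Z_{j_s}\asymp j_s$, and invoke Stirling's approximation for $\G$-ratios to get $\nu_n(s,t) \lesssim (t/s)^{n\g}$. An Abel-summation-style rearrangement then gives
\[
\prod_{s=1}^k (j_s/j_{s-1})^{(k-s+1)\g} \;=\; i^{-k\g}\prod_{s=1}^k j_s^\g,
\]
which combines with $\prod_s 1/Z_{j_s}\asymp \prod_s 1/j_s$ to produce exactly $\prod_{\ell=1}^k 1/(i^\g j_\ell^{1-\g})$, and all remaining multiplicative constants are absorbed into $M^k$. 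The main obstacle is the telescoping bookkeeping: one must verify that the recursion index $n$ increases by one at each peeling step (so we land on $Q_k$ at the base, whose expectation is a constant) and that the mixed-degree $\nu_n$'s with the varying exponents $(k-s+1)\g$ combine into the clean product with exponent $\g$ on each $j_\ell$ and $-k\g$ on $i$.
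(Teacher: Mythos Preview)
Your approach is essentially the same as the paper's: both peel off the indicators $I_\ell$ one at a time by conditioning on $\PA_{j_\ell-1}$, use Lemma~\ref{QLem} to transport $Q_n(D_i(\cdot)+\d)$ backward between consecutive $j$'s (raising the polynomial degree by one at each peel), land on $\E[Q_k(D_i(i)+\d)]$, and then bound the resulting product of $\Gamma$-ratios via Stirling to extract the $i^{-k\g}\prod_s j_s^{\g}$ factor. Your packaging via $A_\ell(n)$ and the identity $Q_n(x+1)=Q_{n+1}(x)/x$ is a clean way to say exactly what the paper does line by line; the telescoping you call ``Abel-summation-style'' is the same cancellation the paper carries out with the explicit $\Gamma$-ratio product.

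One omission: the lemma is stated for $\PA_t(m,\d)$ with general $m$, not just $m=1$. The paper closes this gap at the end of its proof by passing to $\PA_{mt}(1,\d/m)$, noting that the event $\{j_s\to i\}$ in $\PA_t(m,\d)$ is a union of at most $m^2$ edge events in the $m=1$ model, and invoking the negative correlation of edges to different targets (Lemma~\ref{NegCorrLem}) so that the worst case is all edges landing on the same vertex of the block $I$. You should add this step; without it you have only proved the $m=1$ case.
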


\begin{proof}
\begin{eqnarray*}
\E[\Ind_{\{j_1\rightarrow i\}}\Ind_{\{j_2\rightarrow i\}}\ldots\Ind_{\{j_k\rightarrow i\}} \mid \PA_{j_k-1}]
&=&\Ind_{\{j_1\rightarrow i\}}\Ind_{\{j_2\rightarrow i\}}\ldots\Ind_{\{j_{k-1}\rightarrow i\}} \frac{D_i(j_k-1)+\d}{(2+\d)j_k-1}\\
&=&\frac{\g}{j_k-\g}\brac{\prod_{s=1}^{k-1}\Ind_{\{j_s\rightarrow i\}}}   (D_i(j_k-1)+\d).
\end{eqnarray*}

Therefore, applying Lemma \ref{QLem} repeatedly, 
\begin{eqnarray*}
\E[\Ind_{\{j_1\rightarrow i\}}\Ind_{\{j_2\rightarrow i\}}\ldots\Ind_{\{j_k\rightarrow i\}}]
&=&\frac{\g}{j_k-\g}\E\left[  \brac{\prod_{s=1}^{k-1}\Ind_{\{j_s\rightarrow i\}}}   Q_1(D_i(j_k-1)+\d)\right]\\
&=&\frac{\g}{j_k-\g}\E\left[  \brac{\prod_{s=1}^{k-1}\Ind_{\{j_s\rightarrow i\}}}   Q_1(D_i(j_k-2)+\d)\right]\frac{j_k-1}{j_k-1-\g}\\
&=&\vdots\\
&=&\frac{\g}{j_k-\g} \E\left[\brac{\prod_{s=1}^{k-1}\Ind_{\{j_s\rightarrow i\}}}Q_1 (D_i(j_{k-1})+\d)\right]
\prod_{s=j_{k-1}+1}^{j_k-1}\frac{s}{s-\g}.
\end{eqnarray*}

Now,
\begin{eqnarray*}
\E\left[\brac{\prod_{s=1}^{k-1}\Ind_{\{j_s\rightarrow i\}}}(D_i(j_{k-1})+\d) \mid \PA_{j_{k-1}-1}\right]
&=& \brac{\prod_{s=1}^{k-2}\Ind_{\{j_s\rightarrow i\}}}\frac{D_i(j_{k-1}-1)+\d}{(2+\d)j_{k-1}-1}(D_i(j_{k-1}-1)+\d+1)\\
&=& \frac{\g}{j_{k-1}-\g}\brac{\prod_{s=1}^{k-2}\Ind_{\{j_s\rightarrow i\}}}Q_2(D_i(j_{k-1}-1)+\d).
\end{eqnarray*}

Thus, by repeated application of Lemma \ref{QLem},
\begin{align*}
&\E[\Ind_{\{j_1\rightarrow i\}}\Ind_{\{j_2\rightarrow i\}}\ldots\Ind_{\{j_k\rightarrow i\}}]\\
&\quad =\frac{\g}{j_k-\g}\frac{\g}{j_{k-1}-\g} \E\left[\brac{\prod_{s=1}^{k-2}\Ind_{\{j_s\rightarrow i\}}}Q_2(D_i(j_{k-1}-1)+\d)\right]
\prod_{s=j_{k-1}+1}^{j_k-1}\frac{s}{s-\g}\\
&\quad =\frac{\g}{j_k-\g}\frac{\g}{j_{k-1}-\g}\E\left[\brac{\prod_{s=1}^{k-2}\Ind_{\{j_s\rightarrow i\}}}Q_2(D_i(j_{k-2})+\d)\right]
\prod_{s=j_{k-2}+1}^{j_{k-1}-1}\frac{s+\g}{s-\g}\prod_{s=j_{k-1}+1}^{j_k-1}\frac{s}{s-\g}.
\end{align*}

This pattern continues until we get
\begin{align*}
&\E[\Ind_{\{j_1\rightarrow i\}}\Ind_{\{j_2\rightarrow i\}}\ldots\Ind_{\{j_k\rightarrow i\}}]\\
&\quad =\frac{\g}{j_k-\g}\frac{\g}{j_{k-1}-\g}\ldots \frac{\g}{j_1-\g} \prod_{s=j_1+1}^{j_{2}-1}\frac{s+(k-2)\g}{s-\g} \ldots \prod_{s=j_{k-3}+1}^{j_{k-2}-1}\frac{s+2\g}{s-\g}\prod_{s=j_{k-2}+1}^{j_{k-1}-1}\frac{s+\g}{s-\g}\prod_{s=j_{k-1}+1}^{j_k-1}\frac{s}{s-\g}\\
&\qquad \times \E[Q_k(D_i(j_1-1)+\d)].
\end{align*}

Applying Lemma \ref{QLem} repeatedly,
\begin{eqnarray*}
\E[Q_k(D_i(j_1-1)+\d)]=\E[Q_k(D_i(i)+\d)]\prod_{s=i+1}^{j_1-1}\frac{s+(k-1)\g}{s-\g},
\end{eqnarray*}
and
\begin{eqnarray*}
\E[Q_k(D_i(i)+\d)]&=&\brac{1-\frac{1+\d}{(2+\d)i-1}}Q_k(1+\d)+\frac{1+\d}{(2+\d)i-1}Q_k(1+\d+1)\\
&=&Q_k(1+\d)\frac{i+(k-1)\g}{i-\g}. 
\end{eqnarray*}
Thus, 
\begin{align}
&\E[\Ind_{\{j_1\rightarrow i\}}\Ind_{\{j_2\rightarrow i\}}\ldots\Ind_{\{j_k\rightarrow i\}}]\nonumber \\
&\quad = \prod_{s=i}^{j_1-1}\frac{s+(k-1)\g}{s-\g}\prod_{s=j_1+1}^{j_{2}-1}\frac{s+(k-2)\g}{s-\g} \ldots \prod_{s=j_{k-3}+1}^{j_{k-2}-1}\frac{s+2\g}{s-\g}\prod_{s=j_{k-2}+1}^{j_{k-1}-1}
\frac{s+\g}{s-\g}\prod_{s=j_{k-1}+1}^{j_k-1}\frac{s}{s-\g}\nonumber \\
&\qquad \times\frac{\g}{j_k-\g}\frac{\g}{j_{k-1}-\g}\ldots \frac{\g}{j_1-\g} Q_k(1+\d).\label{ujr78jk85686}
\end{align}

Observe that 
\begin{equation*}
\prod_{s=j_{k-1}+1}^{j_k-1}\frac{s}{s-\g}=\frac{\G(j_k)}{\G(j_{k-1}+1)}\frac{\G( j_{k-1}+1-\g)}{\G(j_k-\g)}
=\frac{\G(j_k)}{\G(j_k-\g)}\frac{\G(j_{k-1}+1-\g)}{\G(j_{k-1}+1)},
\end{equation*}
and similarly with the other product terms. Thus, the product will give us
\begin{align*}
&\frac{\G(j_k)}{\G(j_k-\g)}\frac{\G(j_{k-1}+1-\g)}{\G(j_{k-1}+1)}\frac{\G(j_{k-1}+\g)}{\G(j_{k-1}-\g)}
\frac{\G(j_{k-2}+1-\g)}{\G(j_{k-2}+1+\g)}\frac{\G( j_{k-2}+2\g  )}{\G( j_{k-2}-\g  )}\frac{\G(  j_{k-3}+1-\g )}{\G( j_{k-3}+1+2\g  )}
\ldots\\
&\ldots \frac{\G( j_2+(k-2)\g  )}{\G( j_2-\g  )}\frac{\G( j_1+1-\g  )}{\G( j_1+1+(k-2)\g  )}\frac{\G( j_1+(k-1)\g  )}{\G( j_1-\g  )}
\frac{\G( i-\g  )}{\G( i+(k-1)\g)  }.
\end{align*}

Observe,
\begin{equation*}
\frac{\G(j_{k-1}+1-\g)}{\G(j_{k-1}+1)}\frac{\G(j_{k-1}+\g)}{\G(j_{k-1}-\g)}
=\frac{j_{k-1}-\g}{j_{k-1}}\frac{\G(j_{k-1}+\g) }{\G(j_{k-1})}< \frac{\G(j_{k-1}+\g) }{\G(j_{k-1})}.
\end{equation*}

A similar argument bounds the other fraction pairs, thereby giving an upper bound on the product of
\begin{equation}
\frac{\G(j_k)}{\G(j_k-\g)} \frac{\G(j_{k-1}+\g) }{\G(j_{k-1})} \frac{\G( j_{k-2}+2\g )}{\G(  j_{k-2}+\g )}\ldots
\frac{\G( j_1+(k-1)\g  )}{\G( j_1+(k-2)\g  )}\frac{\G( i-\g  )}{\G( i+(k-1)\g)  }.\label{g5gg7h9h}
\end{equation}

For some constant $c$ which depends only on $\g$, we have $\G(x+\g)/\G(x) \leq c x^{\g}$. Therefore, \eqref{g5gg7h9h} is bounded by $c^k\frac{(j_k, j_{k-1}\ldots j_1)^\g}{i^{k\g}}$. 

Going back to \eqref{ujr78jk85686}, we have
\begin{equation}
\E[\Ind_{\{j_1\rightarrow i\}}\Ind_{\{j_2\rightarrow i\}}\ldots\Ind_{\{j_k\rightarrow i\}}]
 \leq c^k \frac{1}{i^\g j_1^{1-\g}}\frac{1}{i^\g j_2^{1-\g}}\ldots \frac{1}{i^\g j_k^{1-\g}}
\end{equation} 
 where $c$ is a constant that depends only $\g$.  

We wish to extend the above result to $\PA_t(m,\d)$ with $m>1$, that is, we wish to bound the probability of the $j$s connecting
to the same vertex $i$. The $j$s do not have to be distinct. Recall that vertex $i$ in $\PA_t(m,\d)$  is created from grouping 
$m$ consecutive vertices in $\PA_{mt}(1,\d/m)$ and contracting them into one vertex (possibly creating loops and/or parallel edges
in doing so). 

Let $I=\{m(i-1)+1,m(i-1)+2, \ldots, mi \}$ be the set of vertices in $\PA_{mt}(1,\d/m)$ that group to become 
$i$ in $\PA_t(m,\d)$. Similarly, we have sets $J_1, J_2, \ldots, J_k$ for the $j$s. 

Then the event  $j_1 \rightarrow i$ in $\PA_t(m,\d)$ occurs in $\PA_{mt}(1,\d/m)$ when a vertex in $J_1$ throwing an edge to a vertex in $I$. This can happen in $m^2$ different ways. Then the event  
$j_1 \rightarrow i \cap j_2 \rightarrow i \cap \ldots\cap j_k \rightarrow i$ in $\PA_t(m,\d)$ can happen in at most $m^{2k}$ 
(it may be less than this since we do not insist the $j$s are distinct). Since edges to different vertices are negatively correlated (cf. Lemma~11.13 from~\cite{Remco} - Lemma~\ref{NegCorrLem} below), the probability is maximised when in $\PA_{mt}(1,\d/m)$  all the $j$s throw to the same vertex in $I$. 

\end{proof}

Recall that the notation $g(j,\ell)=i$ means the $\ell$th edge of vertex $j$ was thrown to vertex $i<j$. We use Lemma~11.13 
from~\cite{Remco} that states negative correlation between edges thrown to different vertices. For an integer $N_i$ let

\begin{equation*}
E_i=\bigcap_{n=1}^{N_i}\{g(j_n^{(i)}, \ell_n^{(i)}) = i\}.
\end{equation*}

Hence, $E_i$ denotes the event that a certain set of edges are thrown to vertex $i$. The following lemma says $E_i$ and $E_{i'}$ are negatively correlated for $i \neq i'$, that is, edges thrown to different vertices are negatively correlated. 
\\

\begin{lemma}[Lemma 11.13~\cite{Remco}]\label{NegCorrLem}
For distinct vertices $i_1, i_2, \ldots, i_k$ in $\PA_t(m,\d)$,
\begin{equation*}
\Pr\brac{\bigcap_{s=1}^k E_{i_s}} \leq \prod_{s=1}^k\Pr(E_{i_s}).
\end{equation*}
\end{lemma}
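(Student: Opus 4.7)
The plan is to prove the lemma by induction on the number $k$ of distinct vertices, after reducing first to the $m=1$ model $\PA_t(1,\d)$ via the standard super-vertex construction. Under this construction, each event $E_{i_s}$ in $\PA_t(m,\d)$ unfolds into a finite disjoint union of atomic events specifying which sub-vertex of the block $\{m(i_s{-}1)+1,\ldots,mi_s\}$ each prescribed edge lands on. Because the blocks corresponding to distinct $i_s$ are themselves disjoint, proving the product inequality for the atomic events in $\PA_{mt}(1,\d/m)$ and summing recovers the product inequality for $\PA_t(m,\d)$.

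For $\PA_t(1,\d)$ the case $k=1$ is trivial. The inductive step is to show, for $k\geq 2$,
$$
\Pr\!\left[\bigcap_{s=1}^k E_{i_s}\right] \le \Pr\!\left[\bigcap_{s=1}^{k-1} E_{i_s}\right]\cdot \Pr\!\left[E_{i_k}\right],
$$
from which iterating gives the lemma. This in turn reduces to the conditional inequality
$$
\Pr\!\left[\bigcap_{s=1}^{k-1} E_{i_s}\;\Big|\;E_{i_k}\right] \le \Pr\!\left[\bigcap_{s=1}^{k-1} E_{i_s}\right].
$$
I would prove this by coupling two copies of the preferential attachment process, one unconditional and one conditioned on $E_{i_k}$, in such a way that the degrees $D_{i_s}(t)$ for $s<k$ in the conditioned copy are stochastically dominated by those in the unconditioned copy at every time $t$. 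The domination is natural: every extra edge forced to land on $i_k$ is an edge that cannot simultaneously land on $i_s$ for $s<k$. Combined with the structural fact that the conditional edge-throw probability at time $t'$ equals $(D_i(t'-1)+\d)/C_{t'}$ with $C_{t'}$ deterministic and the numerator non-decreasing in $D_i$, the domination forces the probability of the prescribed attachments to $i_1,\ldots,i_{k-1}$ to decrease under the conditioning.

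The main obstacle I anticipate is constructing and propagating the coupling cleanly across all time steps, especially when $\d<0$, where the negative shift can make weights small at early times and complicates a direct appeal to the standard P\'olya urn negative-association results (which require non-negative weights). One would handle this by splitting the analysis into the period before all of $i_1,\ldots,i_k$ are born and the period after, and invoke the classical urn negative-association lemma only once every $i_s$ has degree at least one (where the weights $D_i+\d\geq 1+\d > -m+1$ are effectively positive after a shift). Granting this coupling, the induction on $k$ together with the super-vertex lifting to $m\geq 1$ close the argument.
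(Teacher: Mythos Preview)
The paper does not prove this lemma: it is quoted verbatim as Lemma~11.13 of van~der~Hofstad~\cite{Remco} and used as a black box. There is therefore no ``paper's proof'' to compare your proposal against.

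On the merits of your sketch: the overall architecture (reduce to $m=1$, then induct on $k$ via a coupling that shows conditioning on $E_{i_k}$ stochastically lowers the degrees of the other targets) is the standard one and is essentially how the result is proved in~\cite{Remco}. Two points, however, deserve tightening. First, your reduction to $m=1$ is not quite self-contained: after unfolding, a single atomic event $A_{i_s,\alpha_s}$ may send different prescribed edges to \emph{different} sub-vertices within the block $\{m(i_s-1)+1,\ldots,mi_s\}$, so what you actually need at the $m=1$ level is negative correlation between events indexed by \emph{disjoint sets} of target vertices, not merely distinct single targets. This stronger form is what the coupling actually delivers, but you should state and prove it in that form. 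Second, the coupling paragraph is too loose: you invoke the formula $(D_i(t'-1)+\d)/C_{t'}$ as ``the conditional edge-throw probability'', but that is the \emph{unconditional} probability; once you condition on $E_{i_k}$ the law of the non-forced edges changes through the altered degree trajectory of $i_k$, and you must build the coupling step by step (forced steps send the edge to $i_k$ in the conditioned copy and are free in the other; at free steps use a common uniform to couple the choices so that the degree vector of $\{i_1,\ldots,i_{k-1}\}$ in the conditioned copy is coordinatewise dominated). The $\d<0$ concern is a non-issue once the coupling is written this way, since the weights $D_i(t)+\d$ are always positive for existing vertices in the model.
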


The following is a corollary of Lemmas  \ref{UBProbLemma} and \ref{NegCorrLem}. The $j$s need not be distinct, and some of the $j$s may also be $i$s. 
\\

\begin{corollary}\label{edgesJointProb}
Suppose $i_1, j_1, i_2, j_2, \ldots, i_k, j_k$ are vertices in $\PA_t(m,\d)$ where $i_s<j_s$ for $s=1,2,\ldots,k$. Then
\begin{equation*}
\Pr(j_1\rightarrow i_2 \cap j_2 \rightarrow i_2, \ldots, j_k \rightarrow i_k)
 \leq M^k\frac{1}{i_1^\g j_1^{1-\g}}\frac{1}{i_2^\g j_2^{1-\g}}\ldots \frac{1}{i_k^\g j_k^{1-\g}}
\end{equation*}
where $M=M(m,\d)$ is a constant that depends only on $m$ and $\d$. 
\end{corollary}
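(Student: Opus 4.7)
The strategy is to reduce Corollary~\ref{edgesJointProb} to Lemma~\ref{UBProbLemma} by partitioning the edge events according to their target vertex and decoupling the targets using the negative correlation statement of Lemma~\ref{NegCorrLem}. First I would group the indices by target: let $\{\iota_1,\ldots,\iota_p\}$ be the distinct values occurring in the multiset $\{i_1,\ldots,i_k\}$ and set $T_q=\{s\in[k]:i_s=\iota_q\}$, so that the $T_q$'s partition $[k]$. Since in $\PA_t(m,\d)$ the event $\{j\rightarrow i\}$ is the union $\bigcup_{\ell=1}^{m}\{g(j,\ell)=i\}$, I would expand
\[
\bigcap_{s=1}^{k}\{j_s\rightarrow i_s\}=\bigcup_{\vec\ell\in[m]^k}\bigcap_{s=1}^{k}\{g(j_s,\ell_s)=i_s\}
\]
and apply a union bound over the $m^k$ choices of $\vec\ell$. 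For each fixed $\vec\ell$, grouping the $k$ basic edge events by their target gives $\bigcap_{s=1}^{k}\{g(j_s,\ell_s)=i_s\}=\bigcap_{q=1}^{p}E_{q,\vec\ell}$, where $E_{q,\vec\ell}=\bigcap_{s\in T_q}\{g(j_s,\ell_s)=\iota_q\}$ is of precisely the atomic form to which Lemma~\ref{NegCorrLem} applies. Negative correlation then yields
\[
\Pr\!\brac{\bigcap_{q=1}^{p}E_{q,\vec\ell}}\leq\prod_{q=1}^{p}\Pr(E_{q,\vec\ell}).
\]

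Next I would bound each factor using Lemma~\ref{UBProbLemma}. Since $E_{q,\vec\ell}\subseteq\bigcap_{s\in T_q}\{j_s\rightarrow\iota_q\}$ and that lemma permits repeated $j_s$ and requires only $\iota_q<j_s$ (which holds because $i_s<j_s$ for every $s$), it gives, for a constant $M=M(m,\d)$ independent of $q$ and $\vec\ell$,
\[
\Pr(E_{q,\vec\ell})\leq M^{|T_q|}\prod_{s\in T_q}\frac{1}{\iota_q^{\g}\,j_s^{1-\g}}=M^{|T_q|}\prod_{s\in T_q}\frac{1}{i_s^{\g}\,j_s^{1-\g}}.
\]
Multiplying across $q$ (which reconstitutes a product over all $s\in[k]$, since $\sum_q|T_q|=k$) and then summing $m^k$ identical copies over $\vec\ell$ would give
\[
\Pr\!\brac{\bigcap_{s=1}^{k}\{j_s\rightarrow i_s\}}\leq m^kM^k\prod_{s=1}^{k}\frac{1}{i_s^{\g}\,j_s^{1-\g}}=(mM)^k\prod_{s=1}^{k}\frac{1}{i_s^{\g}\,j_s^{1-\g}},
\]
which is exactly the claim after replacing $M$ by $mM$.

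The only step requiring care is the bookkeeping when the $j_s$ coincide with each other or with some $i_{s'}$: the partition must be by the target index $i_s$ rather than the source $j_s$, and the expansion $\{j\rightarrow i\}=\bigcup_{\ell}\{g(j,\ell)=i\}$ must be done independently for each $s$ so that the resulting intersection inside every $E_{q,\vec\ell}$ is of the atomic form required by Lemma~\ref{NegCorrLem}. The $m^k$ loss from the union bound is harmless because $M$ is allowed to depend on $m$ and $\d$; one could avoid this loss via a FKG-type extension of Lemma~\ref{NegCorrLem} for increasing events in the edge indicators, but this is not needed for the statement.
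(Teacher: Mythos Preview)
Your argument is correct and is precisely the route the paper intends: the paper states the result as an immediate consequence of Lemmas~\ref{UBProbLemma} and~\ref{NegCorrLem} without spelling out the details, and your union bound over $\vec\ell\in[m]^k$ is the clean way to reduce the events $\{j_s\rightarrow i_s\}$ to the atomic form $\{g(j,\ell)=i\}$ that Lemma~\ref{NegCorrLem} requires. The extra factor $m^k$ is absorbed into the constant, as you note, and your care in partitioning by \emph{target} (so that the $\iota_q$ are distinct, as Lemma~\ref{NegCorrLem} demands) is exactly the right bookkeeping.
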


\subsection{Witness Trees}
In order to show that a vertex $i$ does not get infected in round $\t=1$ \whp, it suffices to show that that
there is no depth-$1$ witness structure, \whp. This can be done by showing that the expected number of such witness structures is
$o(1)$. We shall deal with trees first, where every internal (non-leaf) vertex has $r$ children.

Let $i \in \PA_t(m,\d)$. A particular tree $T_i$, rooted at $i=\text{root}(T_i)$ with leaves $L=\text{leaves}(T_i)$, is a subgraph of 
$\PA_t(m, \d)$. If  $L \subseteq \mathcal{I}_0$ but no other vertex is in $\mathcal{I}_0$, 
then $T_i$ is called a \emph{witness tree}. For the sake of the analysis, in this section 
it will be convenient to consider edges of $\PA_t(m, \d)$ to be directed, where edge $(i,j)$ is directed from the
younger to the older.  Thus, given a $T_i$ the orientations on its edges are already determined and we are not free
to alter them. Suppose vertex $j$ is a child of vertex $j'$ in a  tree $T_i$. If $j'<j$, then the edge $\{j',j\}$ is directed from $j$
to $j'$ and we call $(j,j')$ an \emph{up edge}; otherwise we call it an \emph{down edge}.

A given tree $T_i$ is a member of a rooted, directed, isomorphism class $\overrightarrow{\mathcal{T}_i}$: this consists of pairwise 
isomorphic rooted trees, where the root is labeled by $i$ and the other vertices have labels in 
$[t]\setminus \{i\}$. Here, we assume that an isomorphism between members of this class respects edge orientations. 

Alternatively, we may define $\overrightarrow{\mathcal{T}}$ to be a rooted, 
directed $r$-ary tree whose vertices are the variables
$x_0, x_1, x_2, \ldots, x_N$ and $x_0$ is the label/variable 
of the root. These variables take values in $[t]$. 
If we set $x_0=i$, then we denote the resulting tree (or class of trees) by $\overrightarrow{\mathcal{T}_i}$.
Every assignment of the variables which respects the edge orientations gives rise to a $T_i \in \overrightarrow{\mathcal{T}_i}$. 

Let $d_0=\min \{d \in \mathbb{N} : d\g>1\}$. As we shall see, we need only consider trees of depth at most $d_0$, which is, of course, a constant.
Consequently, there is a bounded number of isomorphism classes, and since each tree is $r$-ary, no tree has more than $r^{d_0+1}$
vertices. 

We shall deal with the cases $r\g>1$ and $r\g\leq 1$ separately, starting with the former. There, it suffices to consider only trees 
of depth 1. 

We require the following lemma.
\\

\begin{lemma}\label{noPar}
For infection probability $p=O(1/t^\g)$,  no vertex in $\mathcal{I}_0$ has parallel edges \whp.
\end{lemma}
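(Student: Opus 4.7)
My plan is a first-moment bound: show that
\[
N := |\{i \in \mathcal{I}_0 : i \text{ has parallel edges}\}|
\]
has $\E[N] = o(1)$, and conclude by Markov's inequality.

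The crucial preliminary observation is a structural reduction. Since every edge of $\PA_t(m,\d)$ is thrown from a vertex $j$ to some $g(j,\ell) \leq j$, any two parallel edges must come from the \emph{same} time step: if edges $(j_1,\ell_1)$ and $(j_2,\ell_2)$ with $j_1 < j_2$ shared the same endpoint pair, we would need $j_2 = g(j_1,\ell_1) \leq j_1$, a contradiction. Consequently, $i$ is incident to a parallel edge iff one of the following occurs: (a) $i$ has two self-loops; (b) two of $i$'s $m$ edges $g(i,\ell_1), g(i,\ell_2)$ land on a common $k < i$; or (c) some later vertex $j>i$ has two of its edges both land on $i$.

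For cases (b) and (c) I would apply Corollary \ref{edgesJointProb} with $j_1 = j_2$ and $i_1 = i_2$ (absorbing the $\binom{m}{2}$ choices of sub-edge indices into the universal constant), obtaining a pointwise bound of order $C/(i^{2\g} j^{2(1-\g)})$ for $i < j$. For case (a) I would argue directly from the edge rule: at sub-step $\ell$ of vertex $j$'s creation the self-loop probability is $O(\ell/j)$ since $D_j(j,\ell-1) \leq 2(\ell-1)$, so the chance of two self-loops at $j$ is $O(1/j^2)$. A union bound then yields
\[
\Pr[i \text{ has parallel edge}] \leq \frac{C_1}{i^2} + \sum_{k<i}\frac{C_2}{k^{2\g}\, i^{2(1-\g)}} + \sum_{j>i}\frac{C_2}{i^{2\g}\, j^{2(1-\g)}}.
\]

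Summing the right-hand side over $i$ and splitting into the subcases $\g < 1/2$, $\g = 1/2$, $\g > 1/2$ (each is an elementary geometric estimate) bounds the total by $O(t^{2\g-1} + \log^2 t)$. Since $\mathcal{I}_0$ is chosen independently of $\PA_t(m,\d)$ with per-vertex probability $p = O(1/t^\g)$, linearity of expectation gives $\E[N] = O(t^{\g-1} + t^{-\g}\log^2 t) = o(1)$ for any $0<\g<1$, and Markov concludes. The one point to handle carefully is that Corollary \ref{edgesJointProb} is formally stated for distinct endpoint pairs with $i_s < j_s$; instantiating it with coincident indices is benign, because the proof of Lemma \ref{UBProbLemma} descends to $\PA_{mt}(1,\d/m)$, where the two sub-edges in question correspond to two \emph{distinct} underlying vertices, and the $O(m^4)$ choices of sub-vertices within the contracted groups are simply absorbed into the constant $M$.
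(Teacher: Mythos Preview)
Your proof is correct and follows essentially the same route as the paper: a first-moment bound on the number of initially infected vertices incident to a parallel pair, using the $O\bigl(1/(i^{2\g}j^{2(1-\g)})\bigr)$ estimate for the probability that two of the $m$ edges thrown by $\max(i,j)$ land on $\min(i,j)$, then summing and multiplying by $p=O(t^{-\g})$. The paper organises the double sum from the thrower's side (counting vertices $j$ that throw parallel edges, which simultaneously bounds the number of receivers) while you organise it from the perspective of the infected vertex $i$ via the three-case split; the resulting sums and case analysis in $\g$ are identical. Your explicit structural remark that any parallel pair must originate from a single time step, and your separate treatment of double self-loops, make the argument a bit more transparent, but there is no substantive difference. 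Your caveat about invoking Corollary~\ref{edgesJointProb} with coincident $(i,j)$ is well placed: the correct reading is exactly the one you give, namely that one descends to $\PA_{mt}(1,\d/m)$ where the two sub-edges are thrown by distinct underlying vertices, and this is precisely the bound the paper itself uses (without explicitly citing the corollary) in its proof.
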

\begin{proof}
Let $X_t^\parallel$ be a random variable that counts the number of vertices $j$ which throw parallel edges in $\text{PA}_t(m,\d)$. Then, dealing firstly with the case $\d<0$, 
\[
\E[X_t^\parallel] = O(1) \sum_{j=1}^t\sum_{i=1}^j\frac{1}{i^{2\g}j^{2(1-\g)}} = O(1) \sum_{j=1}^t\frac{1}{j^{2(1-\g)}}\int_1^j x^{-2\g} \, \mathrm{d}x= \frac{O(1)}{2\g-1}\sum_{j=1}^t\frac{1-j^{1-2\g}}{j^{2(1-\g)}}. 
\]
Now $1-2\g=\frac{\d/m}{2+\d/m}<0$ when $\d<0$.  Hence $\E[X_t^\parallel] =O(1) \int_1^t j^{-2(1-\g)}\, \mathrm{d}j=O(t^{2\g-1})$. 
Therefore, the expected number of vertices that are in $\mathcal{I}_0$ and throw parallel edges, or throw parallel edges to vertices in $ \mathcal{I}_0$, is $O(t^{\g-1})=o(1)$. 

When $\d=0$, we have $\g=1/2$ so the integral is $O((\log t)^2)$, giving probability $O((\log t)^2/t^\g)=o(1)$. 

When $\d>0$, we have $0<\g<1/2$ giving probability $O(\log t/t^\g)=o(1)$.
\end{proof}

\subsection{$r\g>1$}\label{subSecA}
In this section we prove Theorem~\ref{Subcritical case}\textbf{(i)}. We remind that $\d<0$ implies $r\g>1$. 

By Lemma \ref{noPar}, any vertex $i$ infected in round $\t=1$ must be infected by a depth-$1$ witness tree. 
\\

\begin{lemma}
Suppose $p=\frac{1}{\omega t^\g}$. For a vertex $i \in \PA_t(m,\d)$, the expected number of depth-$1$ witness trees rooted at 
$i$ is $O\bfrac{1}{\omega^ri^{r\g}}$. 
\end{lemma}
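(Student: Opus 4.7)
The plan is to apply Corollary \ref{edgesJointProb} to control the probability that a specific candidate tree sits inside $\PA_t(m,\d)$, and then to use independence of $\mathcal{I}_0$ from the graph to multiply by $p^r$. A depth-$1$ witness tree at $i$ consists of an unordered set $\{j_1,\dots,j_r\}$ of $r$ distinct children of $i$, all required to lie in $\mathcal{I}_0$; dropping the additional requirement $i \notin \mathcal{I}_0$ and passing from unordered sets to ordered $r$-tuples each cost only a constant factor. Thus the expected number of witness trees at $i$ is at most a constant times
\[
p^r \sum_{j_1,\dots,j_r \in [t]\setminus\{i\}} \Pr\bigl(\{i,j_1\},\dots,\{i,j_r\} \text{ are all edges of } \PA_t(m,\d)\bigr).
\]

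Next I would split the sum according to the signature $(r_+, r_-)$ with $r_- = r - r_+$, where $r_+$ counts the children younger than $i$ (edge direction $j_k\rightarrow i$) and $r_-$ counts the older ones (edge direction $i\rightarrow j_k$). Corollary \ref{edgesJointProb} then bounds the joint edge probability by
\[
M^r \prod_{k:\, j_k > i} \frac{1}{i^\g j_k^{1-\g}}\; \prod_{k:\, j_k < i} \frac{1}{j_k^\g i^{1-\g}},
\]
the second product being the contribution of pairs $(i_s, j_s) = (j_k, i)$ in the Corollary's notation, which is permitted since the $j_s$ there need not be distinct. Using the elementary estimates $\sum_{j=i+1}^t j^{\g-1}\lesssim t^\g$ and $\sum_{j=1}^{i-1} j^{-\g}\lesssim i^{1-\g}$, each ``older'' slot contributes $O(1)$ (the $i^{1-\g}$ in the denominator cancels the sum), while each ``younger'' slot contributes $O(t^\g/i^\g)$.

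Combining, and substituting $p=1/(\omega t^\g)$ together with $i \leq t$, the contribution of signature $(r_+, r_-)$ is
\[
O\!\brac{ p^r \bfrac{t^\g}{i^\g}^{r_+}} = O\!\brac{\frac{1}{\omega^r t^{r_-\g} i^{r_+\g}}} \leq O\!\brac{\frac{1}{\omega^r i^{r\g}}}.
\]
Since there are only $O(1)$ signatures, the claim follows. The only real obstacle is bookkeeping: correctly invoking Corollary \ref{edgesJointProb} with the orientations $i_s<j_s$ (so an older child $j_k<i$ corresponds to the pair $(j_k,i)$, with the vertex $i$ playing the role of several ``$j_s$'' simultaneously) and verifying the elementary sum estimates hold uniformly in $i\in[t]$. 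That distinctness of the $j_k$ and the absence of parallel edges incident to the root may be dropped in the upper bound is either immediate or follows from Lemma \ref{noPar}.
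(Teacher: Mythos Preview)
Your proposal is correct and follows essentially the same approach as the paper's proof: split according to how many of the $r$ leaves are younger than $i$ (up edges) versus older (down edges), apply the edge-probability bound of Corollary~\ref{edgesJointProb}, sum each slot separately via the elementary estimates $\sum_{j>i} j^{\g-1}\lesssim t^{\g}$ and $\sum_{j<i} j^{-\g}\lesssim i^{1-\g}$, and observe that the all-up signature dominates. The paper's write-up is slightly more terse (it fixes $k$ up edges, computes the contribution as $O((t/i)^{\g k})$, and notes the maximum is at $k=r$), but the calculation is the same.
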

\begin{proof}
Let $T_i$ be such a tree with $k$ up edges and $r-k$ down edges. Specifically, say the up leaves are vertices $j_1, j_2, \ldots, j_k$ 
and the down leaves are vertices $j_{k+1}, j_{k+2}, \ldots, j_r$. If $T_i \subseteq \PA_{t}(m,\d)$ 
means that \emph{$T_i$ is a subgraph of} $\PA_t(m,\d)$, then
\begin{align*}
&\Pr\brac{T_i \subseteq \PA_{t}(m,\d)} \leq M^k \frac{1}{i^\g j_1^{1-\g}}\frac{1}{i^\g j_2^{1-\g}}\ldots\frac{1}{i^\g j_k^{1-\g}}\times \frac{1}{i^{1-\g}j_{k+1}^\g}\frac{1}{i^{1-\g}j_{k+1}^\g}
\ldots \frac{1}{i^{1-\g}j_{r}^\g}\\
&=M^k\frac{1}{i^{\g k+(1-\g)(r-k)}}\frac{1}{(j_1j_2 \ldots j_k)^{1-\g}}
\frac{1}{(j_{k+1}j_{k+1}\ldots j_r)^\g}.
\end{align*}

Therefore, the expected number of trees in the isomorphism class (i.e., those trees  isomorphic to $T_i$, rooted at $i$ and having 
the same edge orientations) is bounded from above by 
\begin{eqnarray*}
O(1)\frac{1}{i^{\g k+(1-\g)(r-k)}}\brac{\int_1^t j^{-1+\g}\,\mathrm{d}j}^k\brac{\int_1^i j^{-\g}\,\mathrm{d}j}^{r-k}
&=&O(1)\frac{1}{i^{\g k+(1-\g)(r-k)}}t^{\g k}i^{(1-\g)(r-k)}\\
&=& O(1)\bfrac{t}{i}^{\g k}.
\end{eqnarray*}
The above is therefore maximised when $k=r$, that is, when all edges to leaves are up. There are $2^r$ possible edge orientations, 
hence, multiplying by the probability that all leaves of such a tree are infected we get a bound of $O(1/\omega^r)i^{-r\g}$ for
the expected number of depth-$1$ witness trees rooted at $i$.
\end{proof}

The proof of Theorem \ref{Subcritical case}\textbf{(i)} is a corollary of the above: summing $O(1/\omega^r)i^{-r\g}$  over all $i$ from $1$ to $t$, the condition $r\g>1$ ensures we get $o(1)$.

\subsection{$r\g\leq 1$}
In this section we prove Theorem \ref{Subcritical case}\textbf{(ii)} and \textbf{(iii)}.

Recall $d_0=\min \{d \in \mathbb{N}: d\g>1\}$. We shall consider witness trees with depth at most $d_0$. Since $d_0$ is a constant
and each internal vertex has precisely $r$ children, there is only a bounded number of isomorphism classes. 

% To elaborate, a tree $T_i$ is a witness tree if it satisfies three criteria: (i) it occurs as a subgraph of $\PA_t(m,\d)$, (ii)
% all of its leaves are infected in round $\t=0$,  and (iii) none of its internal vertices are infected in round  $\t=0$. 

In round $\t=0$, there are $\Theta(t^{1-\g}/\omega)$ infected vertices in expectation. We will show that in expectation there are
$o(t^{1-\g}/\omega)$ newly infected vertices in each of rounds $\t=1,2,\ldots, d_0-1$, and $o(1)$ in round $d_0$. Consequently, 
the progression of the outbreak stops at or before round $d_0$ \whp\ and, moreover, by Markov's inequality, it follows that 
$|\mathcal{I}_f|/|\mathcal{I}_0| \stackrel{p}{\rightarrow} 1$ as $t\rightarrow \infty$. 

If $i$ gets infected in round $\t=d$, it must be the case that there is a depth-$d$ witness structure which causes this infection. 
We shall bound from above the expected number of such witness structures for $d=1,2,\ldots,d_0$. 
In this section, we focus on witness structures that are trees - the general case is treated in the next section. 

For the purposes of the next section, we will consider an \emph {extended isomorphism class} $\overrightarrow{\mathcal{T}}$ which 
is a rooted, oriented tree, whose vetrices are variables, taking values in $[t]$, and every vertex has \emph{at most} $r$ children. 
Assuming that the tree has $N+1$ vertices, the variables that are the labels of the vertices are $x_0, x_1, \ldots, x_N$, where 
$x_0$ is the label of the root vertex. 
When these variables are assigned values in $[t]$ that are compatible with the direction of the edges of $\overrightarrow{ \mathcal{T}}$
and the corresponding edges are present in $\PA_t(m,\d)$, then we have a realisation of $\overrightarrow{ \mathcal{T}}$. 
Thus, we may view $\overrightarrow{\mathcal{T}}$ as the set of all such realisations - we write $T \in \overrightarrow{\mathcal{T}}$. 
For any $i \in [t]$, we let $\overrightarrow{\mathcal{T}_i}$ denote the restriction of $\overrightarrow{\mathcal{T}}$ where the root 
variable $x_0$ has been set to $i$.

Let us consider, in particular, the case of a directed isomorphism class $\overrightarrow{\mathcal{T}_i}$, for some $i \in [t]$. 
Let $X_{\overrightarrow{\mathcal{T}_i}}$ count the
number of trees $T_i \in \overrightarrow{\mathcal{T}_i}$ such that $T_i \subseteq \PA_t(m,\d)$ and $L=\text{leaves}(T_i) \subseteq \mathcal{I}_0$. We have
\begin{equation}
\E\left [X_{\overrightarrow{\mathcal{T}_i}}\right] = \bfrac{1}{\omega t^\g}^{|L|}\sum_{T_i \in \overrightarrow{\mathcal{T}_i}}\Pr\brac{T_i \subseteq \PA_t(m,\d)}. \label{mainEq}
\end{equation}

Since each tree has at most $r^{d_0+1}$ edges,  by Corollary \ref{edgesJointProb} we have  
\begin{equation*}
\sum_{T_i \in \overrightarrow{\mathcal{T}_i}}\Pr\brac{T_i \subseteq \PA_t(m,\d)}
\leq C_1(m,\d,r) \sum_{T_i \in \overrightarrow{\mathcal{T}_i}}\prod_{(a,b) \in E(T_i)}\frac{1}{b^\g a^{1-\g}}  .
\end{equation*}
where $C_1(m,\d,r)$ is some constant that depends only on $m, \d, r$ and $E(T_i)$ is the edge set of $T_i$. 
(Recall that each edge is oriented from the largest vertex to the smallest one.)

Since each tree $T_i$ is an assignment of the variables $x_1, x_2, \ldots, x_N$
of the tree $\overrightarrow{\mathcal{T}_i}$ (recall that $x_0=i$), to calculate the sum \eqref{mainEq} we can perform a sum 
over all valid assignments. Our aim is to bound from above the above sum.

To this end, we consider a more general setting in which 
each vertex $x_a$ is associated with a \emph{valuation function} $v_a : [t] \rightarrow \mathbb{R}^+$.
When the variables/vertices $x_a$ assume some value, i.e., some assignment of a vertex in $[t]$, then the corresponding vertices get the value $v_a (x_a)$. 
We consider valuation functions of a certain form, namely, $v_a (j) = (\log j)^{\rho_a}/j^{e_a}$, where $\rho_a$ is a non-negative
integer and $e_a$ is a non-negative real number such that either $e_a = \rho_a = 0$ or, if $e_a >0$, then
$e_a = A \gamma + B(1-\gamma)$ with $A, B$ being non-negative integers that satisfy
$$ c(x_a)+A+B \geq r. \  \ \mbox{Property (A)},$$
where $c (x_a)$ denotes the number of children of $x_a$. 
In the former case, that is, when $e_a = 0$, we call the valuation function \emph{trivial}. Hence, if $x_a$ is 
an internal vertex with a trivial valuation function, then it has exactly $r$ children. 
If $x_a$ is a leaf, then either $v_a (j) = \frac{1}{\omega t^\gamma}=p$ (for $r=2$ we take $\omega =\log t$) or $v_a (j)= (\log j)^{\rho_a}/j^{e_a}$, where 
$\rho_a$ is a non-negative integer and $e_a > 0$ satisfies Property (A). In the former case, we call the leaf \emph{original}; otherwise,
we call it a \emph{contraction} leaf. 
The purpose of having a valuation function of this form will become apparent in the next section, where we consider general witness 
structures that are not trees. In those cases we perform a series of operations that convert a general witness structure into a tree. 
During these operations, we perform contractions of subtrees (hence the term \emph{contraction leaf}). 
Effectively, the valuation function is (up to multiplicative constants) the probability that the vertex/root of the contracted subtree
is infected through this subtree. 
When a leaf is original, it is meant to be externally infected, whereas 
a contraction leaf is infected through a certain sub-tree (that had been) rooted at it.  

For a vertex $x_a \in \overrightarrow{\mathcal{T}}$, we define the function $f_a: [t] \rightarrow \mathbb{R}_{\geq 0}$ recursively: 
If $x_a$ is a leaf, then $f_a(j)=v_a (j)$. 
Otherwise, with $x_{a_1}, x_{a_2}, \ldots, x_{a_k}$ being the 
child variables of $x_a$, where $k\leq r$, with $x_{a_1}, \ldots x_{a_{k_1}}$ having up edges with $x_a$ and the rest down, we set
\begin{equation}
f_a (j) := v_a (j) \brac{\prod_{s=1}^{k_1}\sum_{j'=j+1}^tf_{a_s}(j')\frac{1}{j^\g j'^{1-\g}}}
\brac{\prod_{s=k_1+1}^k\sum_{j'=1}^{j-1}f_{a_s}(j')\frac{1}{j'^\g j^{1-\g}}}.\label{giojho}
\end{equation}
We call $f_a$ the \emph{weight function} of the sub-tree that is rooted at $x_a$. 

However, if the valuation functions of the internal vertices are trivial and all leaves are original, then simply 
\begin{equation*}
f_0(i) \geq  \bfrac{1}{\omega t^\g}^{|L|} \sum_{T_i \in \overrightarrow{\mathcal{T}_i}}
\prod_{(a,b) \in E(T_i)}\frac{1}{b^\g a^{1-\g}}.
\end{equation*}
Thus by (\ref{mainEq}), if we show that 
\begin{equation} \label{eq:ToProve} 
\sum_{i=1}^t f_0 (i)  = o(1),
\end{equation}
this will imply that \whp\ there are no vertices which are infected through a tree that is isomorphic to 
$\overrightarrow{\mathcal{T}}$. 
 
To this end, we will first provide an upper bound on $f_0$ (cf. Lemma~\ref{fLemma} and Corollary~\ref{f0} below). 
In fact, we will provide a more general upper bound that is applicable to a general configuration of witness trees. 
This general form will be useful in the next section where we analyse the expected number of occurrences of 
general witness structures. 

% \textbf{Remark}
% Suppose $x_a$ is an internal vertex with some of its children leaves. Each leaf with an up edge contributes a factor 
% $O(1)\bfrac{t}{j}^\g$
% to the product \eqref{giojho}, and each leaf with a down edge contributes $O(1)$. Therefore, for an upper bound, 
% we may assume that all leaves have up edges. 

Let $\overrightarrow{\mathcal{T}} (a)$ denote the subtree of $\overrightarrow{\mathcal{T}}$ rooted at $x_a$. 
Thus, in particular, $\overrightarrow{\mathcal{T}} (0)= \overrightarrow{\mathcal{T}}$. 
Let $\rho(a)$ be the number of down edges in $\overrightarrow{\mathcal{T}} (a)$ and let
$\ell(a)$ be the number of its original leaves. We shall also be writing 
$a' \in \overrightarrow{\mathcal{T}}(a)$ to denote that $x_{a'}$ is a vertex of $\overrightarrow{\mathcal{T}}(a)$. 
For each $a' \in \overrightarrow{\mathcal{T}}$, we denote by $e_{a'}$ and $\rho_{a'}$ the exponents of the valuation function $v_{a'}$ 
of $x_{a'}$, as described above.  
\\ 

\begin{lemma}\label{fLemma}
Suppose that $\overrightarrow{\mathcal{T}}$ is as above. 
Let $x_a$ be an internal vertex of $\overrightarrow{\mathcal{T}}$.
%let $x_{a_1}, x_{a_2}, \ldots, x_{a_\ell}$ be the child variables of $x_a$, with $x_{a_1}, \ldots x_{a_k}$ having up edges with $x_a$ 
%and the rest down. 
Then uniformly for all $j \in [t]$, 
\begin{equation}
f_a(j) \lesssim \frac{1}{\omega^{\ell (a)}}~ \frac{(1 \vee (\log j)^{\rho (a)'})}{j^{y_a}},\label{CountUpperBound}
\end{equation}
where $y_a$ is such that either $y_a = A\gamma + B(1 - \gamma)$ where $A, B$ are non-negative integers that satisfy $A+B\geq r$ and $B>0$
or $y_a  \geq  \ell (a)\gamma + \sum_{a' \in \overrightarrow{\mathcal{T}} (a)} e_{a'}$,
and $\rho (a)' = \rho(a) + \sum_{a' \in \overrightarrow{\mathcal{T}} (a)} \rho_{a'}$. 
\end{lemma}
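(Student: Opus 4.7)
The plan is to prove the lemma by induction on the depth of the subtree $\overrightarrow{\mathcal{T}}(a)$, unwinding the recursive definition (\ref{giojho}) of $f_a$. The base case handles an internal vertex $x_a$ all of whose children are leaves (original or contraction), where $f_{a_s}$ is simply $v_{a_s}$. For the inductive step, applying the IH to each child $x_{a_s}$ yields $f_{a_s}(j') \lesssim \omega^{-\ell(a_s)}(1 \vee (\log j')^{\rho(a_s)'})/j'^{y_{a_s}}$, and we then estimate each of the factors appearing in (\ref{giojho}).

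The main tool is the standard estimate
\[
\sum_{j'=j+1}^{t}\frac{(\log j')^{\beta}}{j'^{\alpha}} \lesssim \begin{cases} (\log j)^{\beta}\, j^{1-\alpha} & \alpha > 1,\\ (\log t)^{\beta+1} & \alpha = 1,\\ (\log t)^{\beta}\, t^{1-\alpha} & \alpha < 1, \end{cases}
\]
and its symmetric counterpart for $\sum_{j'=1}^{j-1}$. For an up edge $x_a \to x_{a_s}$ the exponent is $\alpha = 1-\gamma+y_{a_s}$ and the factor carries an extra $j^{-\gamma}$; for a down edge $\alpha = \gamma+y_{a_s}$ with an extra $j^{-(1-\gamma)}$. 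When the sum converges the resulting factor has exponent $y_{a_s}$ in $j$; when it diverges, the exponent is $\gamma$ (up) or $1-\gamma$ (down) in $j$, with a boundary term in $t$ that is absorbed into later powers. Because $y_{a_s}$ is, by IH, a non-negative integer combination $A_s\gamma + B_s(1-\gamma)$, and because $\gamma$ and $1-\gamma$ themselves belong to this lattice, the exponent of $j$ in $f_a$ remains of the form $A\gamma + B(1-\gamma)$.

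To establish $A+B \geq r$ at the root, we combine two contributions: the root's own valuation $v_a(j) = (\log j)^{\rho_a}/j^{e_a}$ contributes the pair $(A_{v_a}, B_{v_a})$, which by Property (A) satisfies $A_{v_a}+B_{v_a}+c(x_a)\geq r$ when $e_a>0$, while a trivially-valued root has $c(x_a)=r$; each child edge then contributes an additional non-negative amount to $A+B$ via the integration. The condition $B>0$ arises whenever the subtree contains a down edge or a contraction leaf with $B_{v}>0$. The alternative form $y_a \geq \ell(a)\gamma+\sum_{a'}e_{a'}$ is the bound one obtains in the ``fully upward, fully convergent'' regime, where every factor contributes exactly $y_{a_s}$; this bound is then $\sum$-additive across the subtree and directly yields the second alternative.

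The main obstacle is the exhaustive casework needed to verify that, across every combination of up/down children and every convergent/divergent/critical regime of their integrals, the resulting exponent $y_a$ lands in one of the two prescribed forms. Tracking the logarithmic exponents $\rho(a)'$ is routine: the critical case $\alpha=1$ contributes one extra $\log$, which is absorbed using $\log t \leq \log j + \log(t/j)$ together with the $1\vee(\log j)^{\rho(a)'}$ slot in the claimed bound, and divergent sums contribute their $\log t$ into this same slot. No new idea beyond bookkeeping is required once the inductive scheme and the exponent-lattice observation are in place.
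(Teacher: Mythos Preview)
Your inductive framework is the same as the paper's, and the basic mechanics of estimating each factor in (\ref{giojho}) by an integral are right. But there is a real gap in how you handle the up-edge sums.

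For an up edge to a non-original child $x_{a_s}$ (internal vertex or contraction leaf), you list all three regimes $\alpha>1$, $\alpha=1$, $\alpha<1$ and say that in the divergent/critical cases the boundary term in $t$ is ``absorbed into later powers'' or into the $1\vee(\log j)^{\rho(a)'}$ slot via $\log t \leq \log j + \log(t/j)$. This does not work: the bound (\ref{CountUpperBound}) is required to hold \emph{uniformly for all $j\in[t]$} and contains no $t$-dependence apart from the $\omega^{-\ell(a)}$ factor. A stray $t^{\gamma-y_{a_s}}$ or $(\log t)^{\beta+1}$ cannot be absorbed (take $j$ bounded). The only place a divergent up-sum is harmless is for an \emph{original} leaf, where $f_{a_s}=p=1/(\omega t^{\gamma})$ and the $t^{\gamma}$ from $\int_j^t x^{-(1-\gamma)}\,dx$ exactly cancels the $t^{-\gamma}$ in $p$, leaving $1/(\omega j^{\gamma})$.

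What the paper does --- and what you are missing --- is to \emph{verify} that for every non-original child the up-sum is in the convergent regime. The inductive hypothesis gives $y_{a_s}=A_s\gamma+B_s(1-\gamma)$ with $A_s+B_s\geq r\geq 2$; combined with $\gamma\leq\tfrac12$ (which holds because this lemma is only used in the subsection $r\gamma\leq 1$, hence $\delta\geq 0$), one checks case by case that $\gamma-y_{a_s}<0$, so $\alpha=1-\gamma+y_{a_s}>1$ always. This is exactly the observation after (\ref{6f4s68d}) in the paper, and it is what makes the exponent ``transfer'' cleanly from child to parent without any $t$-contamination. Once you add this check, the divergent and critical cases for up edges disappear (except for original leaves, handled as above), and the rest of your casework goes through along the paper's lines.
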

\textbf{Remark} The hidden constant factor in \eqref{CountUpperBound} depends only on $m$, $\d$ and $r$. 

\begin{proof}
In the following, for the sake of notational convenience we shall write $(\log j)^\rho$ instead of  $(1 \vee (\log j)^{\rho })$.  
We also set $\ell=\ell(a)$ and $\rho=\rho(a)$. 

We shall give a proof by induction starting from the bottom and going ``up'' the tree. Suppose that the children of an internal vertex
$x_a$ are all leaves with the edges that join them with $x_a$ pointing either upwards or downwards. 
% Suppose also that for those edges that point downwards the corresponding leaves are contraction leaves, which means that their 
% valuation function has the form $(\log j)^\rho /j^e$, where $e>0$ satisfies Property (A). 

Let us set $x_a=j$. 
Let $\mathcal{L}_1$ denote the subset of the indices of those leaves that are connected to $x_a$ through an up edge. Similarly, 
let $\mathcal{L}_2$ denote the subset of the indices of those leaves that are connected to $x_a$ through a down edge. 
We have 
\begin{equation} \label{eq:f_leaves} 
f_a (j) = v_a (j) \prod_{a' \in \mathcal{L}_1} \sum_{j_{a'} = j+1}^{t} v_{a'}( j_{a'}) \frac{1}{j^\gamma j_{a'}^{1-\gamma}} 
\prod_{a' \in \mathcal{L}_2} \sum_{j_{a'} = 1}^{j-1} v_{a'}(j_{a'} ) \frac{1}{j_{a'}^{\gamma}j^{1-\gamma}}.
\end{equation} 

The upper bound on each one of the above sums depends on the form of the valuation function as well as on the direction 
of the corresponding edge. 
The following claim provides this case distinction.
\begin{claim} \label{clm:sums_cases}
Assume that $x_{a'}$ is a leaf and let $f_{a'}(j') = (\log j')^{\rho_{a'}}/j'^{e_{a'}}$, if it is a contraction leaf, where $e_{a'}>0$ 
satisfies Property (A). Then for any $1\leq j <t$ we have 
\begin{equation*}
\sum_{j'=j+1}^t f_{a'}(j') {1 \over j^\gamma j'^{1-\gamma}} \lesssim 
\begin{cases} 
(\log j)^{\rho_{a'}} j^{-e_{a'}}, & \mbox{if $x_{a'}$ is a contraction leaf} \\
\frac{1}{\omega}~\frac{1}{j^\gamma}, & \mbox{if $x_{a'}$ is an original leaf}
\end{cases},
\end{equation*}
and 
\begin{equation*} 
\sum_{j'=1}^{j-1} f_{a'} (j') \frac{1}{j'^{\gamma} j^{1-\gamma}} \lesssim 
\begin{cases} 
 (\log j)^{\rho_{a'}} j^{-e_{a'}}, & \mbox{if $1-\gamma > e_{a'}$} \\
\frac{(\log j)^{\rho_{a'} + 1}}{j^{1-\gamma}}, & \mbox{otherwise} 
\end{cases}. 
\end{equation*}
%(Recall that in this case $x_{a'}$ is a contraction leaf.)
\end{claim}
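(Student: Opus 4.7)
The plan is to prove each of the four bounds by elementary sum-to-integral comparison, with the case split driven by whether the denominator exponent on $j'$ exceeds, equals, or falls below $1$. In every case, after substituting the explicit form of $f_{a'}$, the sum reduces to $\sum (\log j')^{\rho}/j'^{\alpha}$ times a $j$-dependent prefactor, and I would use the standard estimates (obtained by integration by parts to absorb the logarithm into multiplicative constants) that the tail $\sum_{j'>j}$ is $\lesssim (\log j)^{\rho}/j^{\alpha-1}$ when $\alpha>1$, while the partial sum $\sum_{j'\le j}$ is $\lesssim (\log j)^{\rho}\,j^{1-\alpha}$ when $\alpha<1$ and $\lesssim (\log j)^{\rho+1}$ when $\alpha=1$ (and $\lesssim 1$ when $\alpha>1$).

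\textbf{Up-edges.} For a contraction leaf, substituting $f_{a'}(j')=(\log j')^{\rho_{a'}}/j'^{e_{a'}}$ and pulling $j^{-\g}$ outside gives the tail $\sum_{j'>j}(\log j')^{\rho_{a'}}/j'^{e_{a'}+1-\g}$. Property~(A) applied to a leaf (so $c(x_{a'})=0$) forces $A+B\ge r\ge 2$, and since we are in the regime $r\g\le 1$ of this subsection we have $\g\le 1/r\le 1/2$. Writing $e_{a'}=A\g+B(1-\g)$ and checking the cases $A\ge 2$, $A=1,B\ge 1$, $A=0,B\ge 2$ (the last using $2(1-\g)\ge 1>\g$), one gets $e_{a'}>\g$, so the inner exponent strictly exceeds $1$. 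The $\alpha>1$ tail estimate and multiplication by $j^{-\g}$ then yield the claimed $(\log j)^{\rho_{a'}}/j^{e_{a'}}$. For an original leaf, $f_{a'}\equiv 1/(\omega t^\g)$ factors out of the sum, leaving $\tfrac{1}{\omega t^\g j^\g}\sum_{j'=j+1}^t j'^{\g-1}\asymp \tfrac{1}{\omega j^\g}$, since the tail sum is $\asymp t^\g$.

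\textbf{Down-edges.} Factoring out $j^{-(1-\g)}$ produces the partial sum $\sum_{j'=1}^{j-1}(\log j')^{\rho_{a'}}/j'^{e_{a'}+\g}$. When $1-\g>e_{a'}$, i.e.\ $e_{a'}+\g<1$, the $\alpha<1$ estimate gives a partial sum $\asymp(\log j)^{\rho_{a'}}j^{1-e_{a'}-\g}$, and dividing by $j^{1-\g}$ delivers $(\log j)^{\rho_{a'}}/j^{e_{a'}}$. When $e_{a'}+\g\ge 1$, the partial sum is $\lesssim(\log j)^{\rho_{a'}+1}$: the extra logarithm arises in the boundary case $e_{a'}+\g=1$, and in the case $e_{a'}+\g>1$ the partial sum is bounded by a universal constant which is trivially absorbed into $(\log j)^{\rho_{a'}+1}$. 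Division by $j^{1-\g}$ gives the second stated bound.

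The only point requiring real care, rather than routine calculus, is verifying that Property~(A) together with the subcritical assumption $r\g\le 1$ forces the strict inequality $e_{a'}>\g$ needed so that the up-edge tail converges at a polynomial rather than a logarithmic rate; once this is in hand, every bound reduces to a textbook Riemann-sum estimate.
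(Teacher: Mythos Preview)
Your proposal is correct and follows essentially the same approach as the paper: bound each sum by the corresponding integral, verify via Property~(A) and $\gamma\le 1/2$ that $e_{a'}>\gamma$ so the up-edge tail exponent exceeds $1$, and for the down-edge split on the sign of $1-(e_{a'}+\gamma)$. Your case analysis for $e_{a'}>\gamma$ (splitting on $A$) is a trivial reorganisation of the paper's (splitting on $B$), and the tail/partial-sum estimates you invoke are exactly the paper's Lemma~\ref{lem:comb_integral}.
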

\begin{proof}[Proof of Claim~\ref{clm:sums_cases}]  
The first sum is bounded from above by an integral: 
\begin{equation*} 
\sum_{j'=j+1}^t f_{a'}(j') {1 \over j^\gamma j'^{1-\gamma}} \leq  
\frac{1}{j^{\gamma}} \int_{j}^t v_{a'}(x) \frac{1}{x^{1-\gamma}} dx. 
\end{equation*}
Assume that $x_{a'}$ is a contraction leaf. In this case, the above integral becomes 
\begin{equation*}  
 \int_{j}^t v_{a'}(x) \frac{1}{x^{1-\gamma}} dx =  \int_{j}^t \frac{(\log x)^{\rho_{a'}}}{x^{1-\gamma + e_{a'}}} dx.
\end{equation*}
The value of this integral now depends on the sign of $-\gamma + e_{a'}$. Recall that $e_{a'}>0$ and it satisfies Property (A). 
Assume that $e_{a'}= A \gamma + B(1-\gamma)$. If $B \leq 1$, then $A \geq r - B$, whereby $\gamma - e_{a'} < 0$. If $B >  1$, then 
$\gamma - e_{a'} \leq \gamma -  2(1-\gamma) = 3\gamma - 2 < 0$, as $\gamma \leq 1/2$. 
Hence, by Lemma~\ref{lem:comb_integral} in Section~\ref{sec:integral}
\begin{equation*}  
 \int_{j}^t v_{a'}(x) \frac{1}{x^{1-\gamma}} dx \lesssim  (\log j)^{\rho_{a'}} j^{\gamma - e_{a'}},
\end{equation*}
and therefore 
\begin{equation*} 
\sum_{j'=j+1}^t f_{a'}(j') {1 \over j^\gamma j'^{1-\gamma}} \lesssim (\log j)^{\rho_{a'}} j^{-e_{a'}}. 
\end{equation*}
Assume now that $x_{a'}$ is an original leaf. In this case,
\begin{equation*}
\sum_{j'=j+1}^t f_{a'}(j') {1 \over j^\gamma j'^{1-\gamma}} \leq  
\frac{1}{j^{\gamma}} \int_{j}^t v_{a'}(x) \frac{1}{x^{1-\gamma}} dx = p \frac{1}{j^{\gamma}} 
\int_{j}^t  \frac{1}{x^{1-\gamma}} dx \lesssim p \frac{1}{j^{\gamma}} t^{\gamma} = \frac{1}{\omega}~\frac{1}{j^{\gamma}}. 
\end{equation*}

Consider now the second sum. If $x_{a'}$ is an original leaf, then 
\begin{equation*} 
\begin{split}
\sum_{j'=1}^{j-1} f_{a'}(j') {1 \over j'^{\gamma} j^{1-\gamma}} &\leq  
\frac{1}{j^{1-\gamma}} \int_{1}^j v_{a'}(x) \frac{1}{x^{\gamma}} dx  = 
p~\frac{1}{j^{1-\gamma}} \int_{1}^j \frac{1}{x^{\gamma}} dx \lesssim 
\frac{1}{\omega t^{\gamma}}~\frac{1}{j^{1-\gamma}}~j^{1-\gamma} \stackrel{j\leq t}\leq \frac{1}{\omega}~\frac{1}{j^\gamma}.
\end{split}
\end{equation*}
If $x_{a'}$ is a contraction leaf, then we have 
\begin{equation*} 
\begin{split}
\sum_{j'=1}^{j-1} f_{a'}(j') {1 \over j'^{\gamma} j^{1-\gamma}} &\leq  
\frac{1}{j^{1-\gamma}} \int_{1}^j v_{a'}(x) \frac{1}{x^{\gamma}} dx  = \frac{1}{j^{1-\gamma}} \int_{1}^j\frac{(\log x)^{\rho_{a'}}}{x^{\gamma +e_{a'}}} dx. 
\end{split}
\end{equation*}
If $1-\gamma > e_{a'}$, then the above becomes
\begin{equation*}
\sum_{j'=1}^{j-1} f_{a'}(j') {1 \over j'^{\gamma} j^{1-\gamma}}\lesssim \frac{1}{j^{1-\gamma}} (\log j)^{\rho_{a'}} j^{1-\gamma -e_{a'}} =(\log j)^{\rho_{a'}} j^{-e_{a'}}.
\end{equation*}
If $1-\gamma \leq e_{a'}$, then we will get
\begin{equation*}
\sum_{j'=1}^{j-1} f_{a'}(j') {1 \over j'^{\gamma} j^{1-\gamma}}\lesssim \frac{(\log j)^{\rho_{a'}+1}}{j^{1-\gamma}}.
\end{equation*}
 \end{proof}

\noindent
{\bf Remark} The above proof effectively shows that it is enough to consider only the case where original leaves are 
connected to their parent through an up edge. Thus, we may assume that original leaves are connected to their parent through up edges.

We will bound the two products of (\ref{eq:f_leaves}) using the above claim. 
Let us consider the first product. Let $\mathcal{L}_1'$ denote the subset of $\mathcal{L}_1$ that consists of the indices of 
the original leaves.  Let $\mathcal{L}_1''$ denote the complement of $\mathcal{L}_1'$ in $\mathcal{L}_1$ - this consists of 
the indices of the contraction leaves. 
Hence, applying the first part of the above claim we obtain: 
\begin{equation*} 
\begin{split}
\prod_{a' \in \mathcal{L}_1} \sum_{j_{a'} = j+1}^{t} v_{a'}( j_{a'}) \frac{1}{j^\gamma j_{a'}^{1-\gamma}}  
&= \prod_{a' \in \mathcal{L}_1'} \sum_{j_{a'} = j+1}^{t} v_{a'}( j_{a'}) \frac{1}{j^\gamma j_{a'}^{1-\gamma}} 
\prod_{a' \in \mathcal{L}_1''} \sum_{j_{a'} = j+1}^{t} v_{a'}( j_{a'}) \frac{1}{j^\gamma j_{a'}^{1-\gamma}} \\
&\lesssim \left( \frac{1}{\omega} \right)^{|\mathcal{L}_1'|}~\frac{ (\log j)^{\sum_{a' \in \mathcal{L}_1''}\rho_{a'}}}
{j^{|\mathcal{L}_1'|\gamma + \sum_{a' \in \mathcal{L}_1''}  e_{a'}}}.
\end{split}
\end{equation*}
Similarly, we deduce an upper bound on the second product through the second part of the claim. Here, we split 
$\mathcal{L}_2$ into two sets: let $\mathcal{L}_2'$ be the set of indices of those leaves for which $1-\gamma > e_{a'}$ and 
$\mathcal{L}_2''$ the complement of this set in $\mathcal{L}_2$. Hence, we have 
\begin{equation*}
\begin{split} 
\prod_{a' \in \mathcal{L}_2} \sum_{j_{a'} = 1}^{j-1} v_{a'}(j_{a'} ) \frac{1}{j_{a'}^{\gamma}j^{1-\gamma}} \lesssim 
\frac{(\log j )^{|\mathcal{L}_2''| + \sum_{a' \in \mathcal{L}_2} \rho_{a'}}}
{j^{\sum_{a' \in \mathcal{L}_2'} e_{a'} + (1-\gamma )|\mathcal{L}_2''|}}. 
\end{split}
\end{equation*}
Thus, (\ref{eq:f_leaves}) now yields: 
 \begin{equation*} \label{eq:f_leaves_bound}
 f_a (j) \lesssim  
\left( \frac{1}{\omega} \right)^{|\mathcal{L}_1'|}~\left( \frac{ 
\left( \log j \right)^{|\mathcal{L}_2''| + \sum_{a' \in \mathcal{L}_2 \cup \mathcal{L}_1''} \rho_{a'}}}
{j^{|\mathcal{L}_1'|\gamma + \sum_{a' \in \mathcal{L}_1'' \cup \mathcal{L}_2'}e_{a'}  + (1- \gamma)|\mathcal{L}_2''| + e_a} 
}\right).  
 \end{equation*}
Let us consider the exponent of $j$ which we denote by $y_a$. 
If $\mathcal{L}_1'' \cup \mathcal{L}_2' = \emptyset$, then the exponent is equal to 
$|\mathcal{L}_1'|\gamma + (1- \gamma)|\mathcal{L}_2''| + e_a$. But $|\mathcal{L}_1'|+ |\mathcal{L}_2''| = c (x_a)$ and since 
$e_a$ satisfies Property (A), it follows that $|\mathcal{L}_1'|\gamma + (1- \gamma)|\mathcal{L}_2''| + e_a = A\gamma + B(1-\gamma)$, 
where $A, B$ are non-negative integers that satisfy $A+B \geq r$. 

If $\mathcal{L}_1'' \cup \mathcal{L}_2' \not = \emptyset$, then $e_{a'} > 0$ for some $a' \in \mathcal{L}_1'' \cup \mathcal{L}_2'$. 
But this satisfies Property (A) and since $c (x_{a'})=0$, it follows that $e_{a'}= A\gamma + B(1-\gamma)$ for some non-negative integers 
$A, B$ that satisfy $A+B \geq r$. Thereby, the whole sum satisfies this. 

Assume now, that $|\mathcal{L}_1'|\gamma + \sum_{a' \in \mathcal{L}_1'' \cup \mathcal{L}_2'}e_{a'}  + (1- \gamma)|\mathcal{L}_2''| + e_a 
$ cannot be expressed in the form $A\gamma + B(1-\gamma)$ with $B>0$. Then necessarily $|\mathcal{L}_2''|=0$ and $\mathcal{L}_2'= \mathcal{L}_2$.  
Also, it is clear that $|\mathcal{L}_1'| =\ell (a)$. 
Hence, it follows that 
$$ y_a = \ell (a) \gamma + \sum_{a' \in \mathcal{L}_1'' \cup \mathcal{L}_2}e_{a'} + e_a = \ell (a) \gamma + \sum_{a' \in \overrightarrow{\mathcal{T}_i} (a)} e_{a'}. $$
This concludes the base case of the induction. 

Now we consider the case where some of the children of $x_a$ are not leaves. In general, some of these children 
are connected to $x_a$ by up edges and the rest by down edges. 
We consider each case separately. Let us assume that $x_a = j$. 

Assume that $x_{a_1}$ is a child of $x_a$ that is an internal vertex.
Letting $\ell_1=\ell(a_1)$, the number of original leaves in the subtree rooted at $x_{a_1}$, we have by the induction hypothesis,
$f_{a_1}(j_1)\lesssim \bfrac{1}{\omega}^{\ell_1}~\frac{(\log j_1)^{\rho_1}}{j_1^{A_1\g+B_1(1-\g)}}$ for some appropriate 
$A_1, B_1$ and $\rho_1$ as in the statement of the lemma. 
In particular, these are such that $A_1+B_1 \geq r$, provided that $A_1 + B_1  > 0$. 
% and moreover, if $B_1=0$, then $A_1 = \ell (a_1) + \sum_{a' \in } e_{a'}$.

Suppose that the child $x_{a_1}=j_1$ is connected by an up edge. We have
\begin{eqnarray*}
\sum_{j_1=j+1}^tf_{a_1}(j_1)\frac{1}{j^\g j_1^{1-\g}}  
&\lesssim &\bfrac{1}{\omega}^{\ell_1} \sum_{j_1=j+1}^t\frac{1}{j^\g j_1^{1-\g}}\frac{(\log j_1)^{\rho_1}}{j_1^{A_1\g+B_1(1-\g)}} \nonumber \\ %\label{qdg6hjuj} \\
&\lesssim &\bfrac{1}{\omega}^{\ell_1}~\frac{1}{j^\g}\int_j^t x_1^{-1+\g -(A_1\g+B_1(1-\g))}(\log x_1)^{\rho_1}\, \mathrm{d}x_1. 
\end{eqnarray*}
The last integral is bounded from above using Lemma~\ref{lem:comb_integral} from Section~\ref{sec:integral} giving 
\begin{eqnarray} \label{6f4s68d}
\sum_{j_1=j+1}^t f_{a_1}(j_1)\frac{1}{j^\g j_1^{1-\g}}  
\lesssim \bfrac{1}{\omega}^{\ell_1}~\frac{(\log j)^{\rho_1}}{j^{A_1\g+B_1(1-\g)}}.
\end{eqnarray} 
Observe $\g -(A_1\g+B_1(1-\g))<0$ in all possible cases: if $B_1=0$ then $A_1 \geq r \geq 2$; if $B_1=1$ then $A_1 \geq r-1 \geq 1$; and if $B_1 \geq 2$   then $\g -(A_1\g+B_1(1-\g))<0$ since $\d \geq 0 \Rightarrow$ $1-\g \geq \g$. 

Note that due to the fact that we consider trees of bounded degree and 
depth, terms such as $\rho$ and $A\g+B(1-\g)$ will always be bounded from above and below by constants that depend only on
$m$, $\d$ and $r$. Therefore, the constant factor incurred by the above integration is always bounded by
some constant that only depends on these parameters. 

Observe that \eqref{6f4s68d} is the same (up to multiplicative constants) as the expression for $f_{a_1}(j_1)$ except that $j$ 
has replaced $j_1$. In this sense, we
see that an up edge causes the parent vertex to ``reverse inherit'' the exponent of the child, in this case, that exponent being
$A_1\g +B_1(1-\g)$. 

Now we will consider what happens if it is a down edge, where, by assumption, $x_{a_1}$ is an internal vertex. We have
\begin{eqnarray} \label{eq:down_edge}
\sum_{j_1=1}^{j-1}f_{a_1}(j_1)\frac{1}{j^\g j_1^{1-\g}}
&\lesssim &\bfrac{1}{\omega}^{\ell_1}~\sum_{j_1=1}^{j-1}\frac{1}{j_1^\g j^{1-\g}}\frac{(\log j_1)^{\rho_1}}{j_1^{A_1\g+B_1(1-\g)}} \nonumber \\
 &\lesssim&\bfrac{1}{\omega}^{\ell_1}~\frac{(\log j)^{\rho_1}}{j^{1-\g}}\int_1^j x_1^{-\g -(A_1\g+B_1(1-\g))}\, \mathrm{d}x_1
 \nonumber \\
 &\lesssim& \left\{ 
  \begin{array}{l l}
   \bfrac{1}{\omega}^{\ell_1} \frac{(\log j)^{\rho_1+1}}{j^{1-\g}} & \quad \text{if  $1-\g-(\g A_1+ (1-\g)B_1)\leq 0$} \label{gh57jf} \\
    \bfrac{1}{\omega}^{\ell_1} \frac{(\log j)^{\rho_1}}{j^{A_1\g+B_1(1-\g)}} & \quad \text{if  $1-\g-(\g A_1+ (1-\g)B_1)>0$}%\label{gd468h4}
  \end{array} \right.
\end{eqnarray}
We observe that if $B_1 \geq 2$ then $1-\g-(\g A_1+ (1-\g)B_1)<0$; if $B_1=1$, then $A_1 \geq r-1 \geq 1$ so $1-\g-(\g A_1+ (1-\g)B_1)<0$; and if $B_1=0$, then $A_1 \geq \ell_1$ by the induction hypothesis. 

Once again, we emphasise that the integration incurs a constant factor that is bounded by a constant that depends only on 
$m$, $r$ and $\d$. 

Let $\mathcal{C}(a)$ denote the set of indices of the children of $x_a$. 
For any $a'\in \mathcal{C}(a)$, let $r_{a'}$ denote the exponent of $j$ in $f_{a'}(j)$.
We let $\mathcal{C}_1 \subseteq \mathcal{C}(a)$ denote the 
set of indices of the original leaves among the members of $\mathcal{C}(a)$. 
%We denote by $\mathcal{C}_1'$ the set of indices of the 
%contraction leaves in $\mathcal{C}(a)$
Also, we let $\mathcal{C}_2 \subseteq \mathcal{C}(a)$ denote the set 
of the indices of those children of $x_a$ that are not original leaves but are connected to $x_a$ through up edges. 
Let $\mathcal{C}_2'$ denote the set of the indices of those children that are not original leaves, are connected to $x_a$ through down 
edges and $1-\gamma > r_{a'}$, for $a' \in \mathcal{C}_2'$. Similarly, we define as $\mathcal{C}_2''$ the set of the indices of those
children that are not original leaves, are connected to $x_a$ through down edges but $1-\gamma \leq r_{a'}$, for $a' \in \mathcal{C}_2''$.

By Claim~\ref{clm:sums_cases} together with \eqref{6f4s68d} and \eqref{eq:down_edge}, we conclude that 
\begin{equation} \label{eq:f_a_bound} 
f_a(j) \lesssim v_a (j) \bfrac{1}{\omega}^{\sum_{a' \in \mathcal{C}(a)} \ell (a')} \frac{ (\log j )^{|\mathcal{C}_2''|
+\sum_{a' \in \mathcal{C}_2 \cup \mathcal{C}_2' \cup \mathcal{C}_2''} \rho (a')}}{j^{|\mathcal{C}_1|\gamma + \sum_{a' \in \mathcal{C}_2 \cup \mathcal{C}_2'} r_{a'} + (1-\gamma) |\mathcal{C}_2''|}}
\end{equation}
% Firstly, note that $|\mathcal{C}_2''| +\sum_{a' \in \mathcal{C}_2 \cup \mathcal{C}_2' \cup \mathcal{C}_2''} \rho (a')$
% is equal to the number of down edges in the sub-tree that is rooted at $x_a$. 
Let $y_a$ denote the exponent of $j$. Firstly, note that  $\ell (a) = \sum_{a' \in \mathcal{C}(a)} \ell (a')$. 

Assume that $\mathcal{C}_2 \cup \mathcal{C}_2' = \emptyset$. Then 
$y_a=|\mathcal{C}_1|\gamma  + (1-\gamma) |\mathcal{C}_2''| + e_a$. But as $|\mathcal{C}_1| + |\mathcal{C}_2''| = d(x_a)$ and 
$e_a$ satisfies Property (A), it follows that $|\mathcal{C}_1|\gamma  + (1-\gamma) |\mathcal{C}_2''| + e_a = A \gamma + B(1-\gamma)$, 
where $A, B$ are non-negative integers that satisfy $A+B \geq r$. 

If $\mathcal{C}_2 \cup \mathcal{C}_2' \not = \emptyset$, then $r_{a'} >0$, for some $a' \in \mathcal{C}_2 \cup \mathcal{C}_2'$, 
which has the form $A \gamma + B(1-\gamma)$, for some $A, B$ that are non-negative integers satisfying  $A+B \geq r$. 
Hence, the exponent of $j$ satisfies this as well. 

% We, finally, verify that the exponent of $j$ satisfies Property (A). 
% If $\sum_{a' \in \mathcal{C}_2 \cup \mathcal{C}_2'} e_{a'} + e_a>0$, 
% then by Claim~\ref{clm:additive} we deduce that the exponent of $j$
% satisfies Property (A). Assume now that this quantity is equal to 0. In this case, $|\mathcal{C}_1|+|\mathcal{C}_2''|=r$ and therefore
% Property (A) is also satisfied. 
%% Now, if $v_a$ is trivial, then there is nothing more to prove. Otherwise, $v_a$ is of the form $j^{-e_a}$, where $e_a$ satisfies Property (A), then, as $e_a$ is added to the exponent of $j$, Claim~\ref{clm:additive} implies that the final exponent satisfies Property (A). 

Assume now that $y_a$ cannot be written in the form $A\g + B(1-\g)$ with $A, B$ non-negative integers and $B>0$. 
Then this is the case for $r_{a'}$ for any $a' \in \mathcal{C}_2 \cup \mathcal{C}_2'$. Hence, by the induction hypothesis 
$\sum_{a' \in \mathcal{C}_2 \cup \mathcal{C}_2'} r_{a'}$ is equal to the number of original leaves that are
contained in the sub-tree that is rooted at those $x_{a'}$ together with the sum of the exponents $e_{a'}$ of the valuation functions 
of the vertices of these sub-trees. 
Moreover, $|\mathcal{C}_2''|=0$ and recall that $|\mathcal{C}_1|$ is the number of original leaves
that are directly connected to $x_a$. 
Thereby, 
$$y_a= \ell (a)\g + \sum_{a' \in \overrightarrow{\mathcal{T}} (a)} e_{a'}.$$  
\end{proof}

\vspace*{5mm}
The above lemma now implies the following. 
\begin{corollary} \label{f0}
If the valuation functions of the internal vertices of $\overrightarrow{\mathcal{T}}$ are trivial and all leaves are original, then 
\begin{equation}
f_0(i)\lesssim \bfrac{1}{\omega}^\ell \bfrac{(1 \vee (\log i)^\rho)}{i^{y_0}} \label{f_0(i)UB}
\end{equation}
where $\ell=\ell(0)$ and $\rho=\rho(0)$ and either $y_0=A\g + B(1-\g)$ where $A, B$ are non-negative integers that satisfy $A+B\geq r$ and 
$B>0$ or $y_0 =\ell \gamma$. 
\end{corollary}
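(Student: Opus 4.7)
My plan is to derive this corollary as a direct specialization of Lemma \ref{fLemma} applied at the root $a=0$. The real work has already been done in the induction of the lemma; what remains is only to unpack the hypotheses and read off the simplified conclusion.

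First I would translate the two hypotheses of the corollary into statements about the exponents appearing in the lemma. ``Trivial valuation function at every internal vertex'' means $e_{a'} = 0$ and $\rho_{a'} = 0$ for every internal $x_{a'} \in \overrightarrow{\mathcal{T}}$. ``All leaves are original'' means each leaf $x_{a'}$ has $v_{a'}(j) = p = 1/(\omega t^\gamma)$ rather than the form $(\log j)^{\rho_{a'}}/j^{e_{a'}}$ with $e_{a'}>0$; in particular, no leaf contributes a positive $e_{a'}$ or $\rho_{a'}$ to the sums appearing in Lemma \ref{fLemma}. Only contraction leaves and non-trivial internal vertices can contribute, and by assumption the tree contains neither. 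Hence
\[
\sum_{a' \in \overrightarrow{\mathcal{T}}(0)} e_{a'} = 0, \qquad \sum_{a' \in \overrightarrow{\mathcal{T}}(0)} \rho_{a'} = 0.
\]

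Next I would substitute these into the conclusion of Lemma \ref{fLemma} at $a=0$. By definition $\ell(0) = \ell$ and $\rho(0) = \rho$, so the compound exponent $\rho(0)' = \rho(0) + \sum_{a' \in \overrightarrow{\mathcal{T}}(0)} \rho_{a'}$ collapses to $\rho$, producing the numerator $(1 \vee (\log i)^\rho)$ required by \eqref{f_0(i)UB}. The first alternative of Lemma \ref{fLemma}, namely $y_0 = A\gamma + B(1-\gamma)$ with $A+B\geq r$ and $B>0$, transfers verbatim. The second alternative gives $y_0 \geq \ell(0)\gamma + 0 = \ell\gamma$; since $i \geq 1$, we have $i^{-y_0} \leq i^{-\ell\gamma}$, so the upper bound \eqref{CountUpperBound} implies the weaker bound with $y_0 = \ell\gamma$ asserted in the corollary. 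Finally, the prefactor $(1/\omega)^{\ell(0)}$ from the lemma matches $(1/\omega)^\ell$. This yields \eqref{f_0(i)UB}.

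The only ``obstacle'' is the bookkeeping point: one must verify that the $\omega$-powers accounted for in the lemma via $\ell(a)$ are precisely those coming from original leaves (and not duplicated through the $e_{a'}, \rho_{a'}$ sums), and that original leaves genuinely contribute zero to those sums. Once this is clear, the corollary follows without invoking any additional estimate beyond what Lemma \ref{fLemma} has already established.
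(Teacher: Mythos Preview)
Your proposal is correct and matches the paper's approach: the paper simply states ``The above lemma now implies the following'' before the corollary, and you have spelled out exactly how that implication works by specializing Lemma~\ref{fLemma} at $a=0$ under the stated hypotheses. Your handling of the second alternative (noting $y_0\geq \ell\gamma$ and that for $i\geq 1$ one may weaken to $y_0=\ell\gamma$ in the upper bound) is the right way to read off the equality claimed in the corollary from the inequality in the lemma.
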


\vspace*{5mm}

We conclude with the proof of \eqref{eq:ToProve} for depth $d_0$. 
Consider the expression on the right-hand side of~\eqref{f_0(i)UB}. If $0< B < r$ and $r \geq 3$, then $A\geq r-B$ and 
it is easy to check that $A\g + B(1-\g)>1$ (it is a convex combination of two positive numbers that are at least 1, one of which is bigger
than one, where $\gamma \not = 0,1$).
If $B \geq r$, then $A \g + B(1-\g) \geq r(1-\g)\geq 3 (1-\g)$. But $\g \leq 1/2$, whereby $3(1-\g) \geq 3/2 > 1$. 
If $y_0 = \ell \g$, then $\ell \geq d_0$ implies 
$y_0 \geq \g d_0>1$.

If $r=2$ then we are not necessarily guaranteed $A\g + B(1-\g)>1$ since, for example, $B=1$ and $r=2$  only assures
$\g A +(1-\g) B \geq 1$. 

If $\g A+(1-\g)B > 1$, then 
\begin{equation}\label{eq:exp_upper_bound}
 \sum_{i=1}^t f_0(i)\lesssim  \bfrac{1}{\omega}^\ell \sum_{i=1}^t \frac{ (1 \vee (\log i)^{\rho})}{i^{\g A + (1-\g)B}} 
\lesssim \bfrac{1}{\omega}^\ell \int_{1}^t\frac{(\log x)^{\rho}}{x^{\g A + (1-\g)B}} \, \mathrm{d}x
=O\left(\bfrac{1}{\omega}^\ell \right).
\end{equation}
In this case, the expected number of witness trees of this isomorphism class, over all $i$, is $o(1)$. 

In the case that $y_0=\ell \g \leq 1$, the sum is $\bfrac{1}{\omega}^\ell (\log t)^\rho t^{1-\ell \g}$ and the
expected number of witness trees of this isomorphism class, over all $i$, is $O((\log t)^\rho t^{1-\ell \g}/\omega^\ell)=o(t^{1-\g}/\omega^{\ell})$ since $\ell \geq 2$. 
In other words, the expected number of witness trees of depth less than $d_0$ is $o(t^{1-\g}/\omega^{\ell})$. 

As stated above, there are only a bounded number of isomorphism classes that we need to consider, hence the relevant constant factors 
are absorbed into the $O(.)$ terms above. 

We would like to extend this to include $r=2$, wherein if we take the depth of the tree to be equal to $d_0$, then it may be the case that
the exponent of $i$ in $f_0(i)$ is $1$ (which is the 
minimum it can be when $\ell\geq d_0$). In that case, the integral in (\ref{eq:exp_upper_bound}) would grow like $(\log t)^{\rho +1}$. 
To bypass this difficulty, when $r=2$ we consider witness trees that have depth equal to $d_0+1$. 
Recall that in this case we assume that $p_0 = \frac{1}{\log t}~\frac{1}{t^{\g}}$. Also, as we have already commented in the 
proof of Claim~\ref{clm:sums_cases}, we may assume that the witness trees we consider are such that all their leaves are 
connected to the rest of the tree through up edges. 

Let $x_1, x_2$ be the children of $x_0$ and assume without loss of generality that the subtree that is rooted at $x_1$ has depth 
$d_0$. 
Suppose the exponent of $j_1$ is $1$. That is, recalling that this subtree has $\ell (1)$ original leaves, by Lemma~\ref{fLemma}
we have $f_1 (j_1) \lesssim \frac{1}{(\log t)^{\ell (1)}} \frac{(\log j_1)^{\rho (1)}}{j_1}$. 
Thus, if $x_1$ is connected by an up edge with $x_0$, by (\ref{6f4s68d}) the exponent transfers, and we get a factor 
$\frac{1}{(\log t)^{\ell (1)}}\frac{(\log i)^{\rho (1)}}{i}$ in $f_0(i)$. 
If it is connected through a down edge, by (\ref{gh57jf}), we get the factor
\begin{equation*}
\frac{1}{(\log t)^{\ell (1)}}~\frac{1}{i^{1-\g}}\int_1^i x_1^{-\g-1}(\log x_1)^{\rho (1)} \, \mathrm{d}x_1 \lesssim 
\frac{1}{(\log t)^{\ell (1)}}~\frac{1}{i^{1-\g}}. 
\end{equation*}
If $x_2$ is an original leaf, then by Claim~\ref{clm:sums_cases} it contributes a factor that is at most (up to a 
multiplicative constant) $\frac{1}
{(\log t)}\frac{1}{i^\g}$,
thus giving in total
$\frac{1}{(\log t)^{\ell (1)+1}}\frac{1}{i^{1+\g}}$ or 
$\frac{1}{(\log t)^{\ell (1)+1}}\frac{1}{i}$. 
In any case,

\begin{equation}
f_0(i) \lesssim \frac{1}{(\log t)^{\ell(1)+1}}~\frac{1}{i}. \label{fupperbound1}
\end{equation}

If $x_2$ is not an original leaf, then by Lemma~\ref{fLemma}
$f_2 (j_2) \lesssim \frac{1}{(\log t)^{\ell (2)}}\frac{(\log j_2)^{\rho (2)}}{j_2^{y_2}}$, where either $y_2$ can be written as 
 $A \g + B(1-\g)$ for some non-negative integers $A, B$ that satisfy $B \geq 1$ and $A+B \geq 2$, or $y_2\geq \ell (2) \g$, where
 in this case $\ell(2) \geq 2$. 

If $x_2$ is joined to $x_0$ by an up edge, then by (\ref{6f4s68d}) it contributes a factor that is at most (up to a constant)
$\frac{1}{(\log t)^{\ell (2)}}~\frac{(\log i)^{\rho (2)}}{i^{y_2}}\lesssim \frac{1}{(\log t)^{\ell (2)}}~\frac{(\log i)^{\rho (2)}}{i^{2\g}}$, giving a 
total 
\begin{equation}
f_0(i) \lesssim   \frac{1}{(\log t)^{\ell (1)}}\frac{(\log i)^{\rho (1)}}{i^{1-\g}}   \frac{1}{(\log t)^{\ell (2)}}     \frac{(\log i)^{\rho (2)}}{i^{2\g}}=\frac{1}{(\log t)^{\ell(0)}}\frac{(\log i)^{\rho(0)}}{i^{1+\g}}. \label{fupperbound2}
\end{equation}

If $x_2$ is not an original leaf and is connected to $x_0$ by a down edge, the possibilities are 
\[
f_0(i) \lesssim \frac{1}{(\log t)^{\ell (1)}}\frac{(\log i)^{\rho (1)}}{i}\frac{1}{(\log t)^{\ell (2)}}\frac{(\log i)^{\rho (2)+1}}{i}, 
\]
or
\[
f_0(i) \lesssim \frac{1}{(\log t)^{\ell (1)}}\frac{(\log i)^{\rho (1)}}{i}\frac{1}{(\log t)^{\ell (2)}}\frac{(\log i)^{\rho (2)+1}}{i^{2\g}}, 
\]
or
\[
f_0(i) \lesssim \frac{1}{(\log t)^{\ell (1)}}~\frac{1}{i^{1-\g}}\frac{1}{(\log t)^{\ell (2)}}\frac{(\log i)^{\rho (2)+1}}{i}, 
\]
or
\[
f_0(i) \lesssim \frac{1}{(\log t)^{\ell (1)}}~\frac{1}{i^{1-\g}}\frac{1}{(\log t)^{\ell (2)}}\frac{(\log i)^{\rho (2)+1}}{i^{2\g}}.
\]

In all cases, 
\begin{equation}
f_0(i) \lesssim  \frac{1}{(\log t)^{\ell(0)}}\frac{(\log i)^{\rho(0)}}{i^{1+\g}}. \label{fupperbound3}
\end{equation}

Summing \eqref{fupperbound1}, \eqref{fupperbound2} or \eqref{fupperbound3} over $i=1,\ldots, t$ gives $o(1)$.

Consequently, the expected number of witness trees of depth $d_0+1$ when the initial infection
probability is $p =\frac{1}{(\log t) t^\g }$ is $o(1)$ as well. 

We have shown that if $r\geq 3$, then $\whp$ the process stops in \emph{less than} $d_0$ rounds, whereas for $r=2$ 
(with the appropriate choice of $p_0$)  it stops in \emph{at most} $d_0$ rounds. Note that $d_0 = \lfloor \frac{1}{\g} \rfloor + 1$. 
To be more precise, we have shown the bounds of Theorem~\ref{Subcritical case} for witness structures that are trees. We need to argue
about general witness structures that may contain cycles. In this case, we show that the expected number of occurrences of such a structure
is bounded by the expected number of occurrences of a tree that is appropriately constructed and has depth either $d_0$ or $d_0+1$, 
depending on the value of $r$.

\subsubsection{General witness structures}\label{General witness structures}
We consider witness structures that may have cycles. Recall that we are only considering the case $\d\geq 0$, 
since $\d<0 \Rightarrow r\g>1$.  

Firstly, the following lemma allows us to consider witness structures where the initially infected vertices are vertices which do not belong 
to cycles. 
\\
\begin{lemma}\label{NoInfectedOnSmallCycle}
Let $K$ be positive constant. If $p=O(1/t^\g)$, then with high probability, no initially infected vertex lies a cycle of 
of size at most $K$ 
\end{lemma}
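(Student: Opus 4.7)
The plan is to prove the lemma by a first-moment argument. Let $N$ count pairs $(v, C)$ where $v \in \mathcal{I}_0$ and $C$ is a cycle of length $\ell \leq K$ in $\PA_t(m,\d)$ with $v \in V(C)$. It suffices to show $\E[N] = o(1)$, since then Markov's inequality yields $\Pr[N \geq 1] = o(1)$. Because the initial infection is independent of the graph, linearity of expectation gives
\[
\E[N] = p \sum_{\ell=1}^{K} \ell \cdot \E[c_\ell],
\]
where $c_\ell$ denotes the number of cycles of length $\ell$ in $\PA_t(m,\d)$. The contributions from $\ell=1$ (self-loops) and $\ell=2$ (parallel edges) are $O(p \cdot (\log t)^{O(1)}) = o(1)$ by direct counting, along the lines of Lemma~\ref{noPar}.

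For $\ell \geq 3$, I parameterise a cycle by a sorted vertex set $i_1 < i_2 < \cdots < i_\ell$ together with a cyclic arrangement $\pi$ of these vertices, and apply Corollary~\ref{edgesJointProb} to the $\ell$ edges of the cycle to obtain
\[
\Pr[\text{this cycle is present in } \PA_t(m,\d)] \leq M^\ell \prod_{a=1}^{\ell} \frac{1}{i_a^{\alpha_a^\pi}},
\]
where $\alpha_a^\pi = 2(1-\g)$ if $i_a$ is a local maximum of the cycle (both cyclic neighbors are smaller), $\alpha_a^\pi = 2\g$ if a local minimum, and $\alpha_a^\pi = 1$ otherwise. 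In particular $\alpha_1^\pi = 2\g$, $\alpha_\ell^\pi = 2(1-\g)$, and $\sum_a \alpha_a^\pi = \ell$. I would then sum $\prod_a i_a^{-\alpha_a^\pi}$ over ordered tuples $i_1 < \cdots < i_\ell$ by iterated integration from $i_\ell$ downward. Using $\d \geq 0 \Rightarrow \g \leq 1/2$, each successive sum is either convergent (contributing a factor $1/j^{c}$ with $c > 0$) or produces a single $\log t$ factor, and careful bookkeeping delivers $O((\log t)^{O(\ell)})$ for each $\pi$. Summing over the constant number of cyclic arrangements on $\ell$ vertices gives $\E[c_\ell] = O((\log t)^{O(\ell)})$, so $p \cdot \ell \cdot \E[c_\ell] = O((\log t)^{O(\ell)}/t^\g) = o(1)$, and summing over $\ell \in \{1, \ldots, K\}$ (a constant) completes the argument.

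The main technical obstacle is confirming that the iterated summation indeed stays polylogarithmic, without any polynomial-in-$t$ factor creeping in at an intermediate step. This reduces to the combinatorial claim that for every tail $\{i_a, \ldots, i_\ell\}$ of the sorted vertex sequence, $\sum_{b \geq a} \alpha_b^\pi \geq \ell - a + 1$, equivalently the cycle has at least as many local maxima as local minima among the vertices of the tail. To see this, restrict the cycle to the set of vertices with label $> i_{a-1}$: the resulting induced subgraph is a disjoint union of arcs, each flanked at both ends by strictly smaller cycle-neighbors, and a standard sequence argument shows that every such arc contains exactly one more local maximum than local minimum of the full cycle. This delivers the required inequality at every stage of the iteration and hence the desired polylog bound.
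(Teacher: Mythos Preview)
Your proposal is correct, and it follows the same overall architecture as the paper's proof: bound the expected number of short cycles through Corollary~\ref{edgesJointProb}, multiply by $p$, and conclude via Markov. The execution, however, is considerably more elaborate than necessary.

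The paper bypasses your entire exponent-tracking machinery with a single inequality: for $i<j$ and $\g\le\tfrac12$ (which holds since the lemma is only invoked when $\d\ge 0$), one has $\frac{1}{i^{\g}j^{1-\g}}\le\frac{1}{(ij)^{1/2}}$. Applying this to every edge of a $k$-cycle $C=(a_1,\ldots,a_k)$ and using that each vertex has degree~2 in the cycle gives
\[
\Pr\bigl(C\subseteq\PA_t(m,\d)\bigr)\ \le\ M^{2k}\prod_{s=1}^{k}\frac{1}{(a_s\wedge a_{s+1})^{\g}(a_s\vee a_{s+1})^{1-\g}}\ \le\ \frac{M^{2k}}{a_1a_2\cdots a_k}.
\]
Now the sum over all vertex tuples factorises, yielding $O((\log t)^K)$ cycles of length at most $K$ in expectation, and multiplying by $p=O(t^{-\g})$ finishes. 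No sorting of vertices, no local-max/min bookkeeping, no tail-sum inequality is needed.

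Your route does work: the tail inequality $\sum_{b\ge a}\alpha_b^{\pi}\ge \ell-a+1$ is correct (indeed it follows cleanly by counting edges with both endpoints in the tail versus one endpoint, using $\g\le\tfrac12$), and it is exactly what the iterated summation needs to stay polylogarithmic. But this is a lot of structure to carry for a bound that the $\sqrt{ij}$ trick delivers in one line. The only thing your version adds is explicit treatment of the $\ell=1,2$ cases, which the paper silently folds into the $3\le k\le K$ range.
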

\begin{proof}
For a cycle $C=(a_1,a_2,\ldots,a_k)$ of size $k \leq K$, we apply Corollary \ref{edgesJointProb}, 
\begin{equation*}
\Pr(C \subseteq \PA_t(m,\d)) \leq M^{2k}\prod_{i=1}^{k} \frac{1}{(a_i \wedge a_{i+1})^{\g}(a_i \vee a_{i+1})^{1-\g}}
\leq \frac{M^{2k}}{a_1\ldots a_k},
\end{equation*}
where we have used the fact that for $i<j$, $\frac{1}{i^\g j^{1-\g}} \leq \frac{1}{(ij)^{1/2}}$ when $\g \leq \frac{1}{2}$. 

Thus, the expected number of cycles in $\PA_t(m,\d)$ of size at most $K$ is bounded from above by
\begin{equation}
\sum_{3 \leq k \leq K}\sum_{a_1,\ldots, a_k}\frac{M^{2k}}{a_1\ldots a_k}=O((\log t)^K )
\end{equation}
and so the number of initially infected vertices on such cycles is $O((\log t)^{K+1} /t^\g)=o(1)$. 
\end{proof}

Recall that if a vertex $i$ becomes infected in round $\t$, then it must have been infected by some neighbours, at least one of which 
got infected in round $\t-1$. Iterating this argument, there must be a chain of infections of length $\t$ that started in a set of initially
infected vertices. This is witnessed by a rooted subgraph, whose root is vertex $i$ and whose other vertices can be classified according 
to their \emph{depths}. Let us consider this notion more precisely. 
Suppose $x$ and $y$ are neighbours in $\PA_t(m,\d)$, 
$x \in \mathcal{I}(\t) \cap \mathcal{S}(\t-1)$, and  $y \in \mathcal{I}(\t-1)$. 
%Then we say $y$ \emph{infected} $x$ (of course, there will, in general
%by more than one vertex that infected $x$). 
%If  $y$ infected $x$, then 
Then we say $x$ is a \emph{parent} of $y$ and $y$ a \emph{child} of $x$.
If $x$ is a parent of $y$ and $x<y$ then $\{ x,y\}$ is an up edge. If $x>y$, then it is a down edge. 
The notion of \emph{parent-child} gives rise to the \emph{depth} of a vertex. 
Let $\depth (i) = 0$ and $\depth(y) = 1 + \max \{ \depth (x) \ : \ \mbox{$y$ is a child $x$}  \}$. 
We shall use this notion later in our proof. 

Suppose a vertex $i \in \mathcal{I}(\t)$ for some $\t >0$. Then there must exist a subgraph $S_i \subseteq \PA_t(m,\d)$ such
that the following hold:

\begin{description}
\item[(1)] every vertex in $S_i$ except $i$ has a parent in $S_i$;
\item[(2)] the set $L = S_i \cap \mathcal{I}(0)$, which we call the set of \emph{leaves}, is non-empty;
\item[(3)] every parent in $S_i$ has exactly $r$ edges in $S_i$ which go to children in $S_i$.
\end{description}

If, furthermore, $i \in \mathcal{I}(\t) \cap \mathcal{S}(\t-1)$, that is, $i$ got infected in round $\t$, then we also have:
\begin{description}
\item[(4)]  $\depth (S_i) = \max_{j \in S_i}\depth(j)=\t$, where $\depth (i) =0$.
\end{description}

We call such an $S_i$ a \emph{witness structure rooted at $i$}. Observe that \textbf{(1)} forces $S_i$ to be connected. Of course, leaves
cannot be parents. Additionally, recall that we only need to analyse bounded size structures and, therefore, only have bounded size cycles.
Hence, by Lemmas \ref{noPar} and  \ref{NoInfectedOnSmallCycle}, any leaf will, with high probability,
 have degree $1$ in $S_i$. We will assume this to be the case. 
 
 Condition \textbf{(3)} implies that a parent has at most $r$ children in $S_i$, and the witness structure is a witness tree as
 per the previous definition, if and only if 
 every parent has exactly $r$ children and every vertex except $i$ has exactly one parent. For a tree, it is also 
 the case that the depth as defined here in terms of infections coincides with the standard meaning of depth -- the graph distance 
 from the root $i$ to a vertex.

%A \emph{witness structure} $S_i$ is a graph on $[t]$ rooted at $i$ where
%\begin{enumerate}
%\item[1.] the root has degree $r$ and the remaining vertices have 
%degree either greater than $r$ or 1. We call the former \emph{internal vertices} and the latter \emph{leaves}; 
%\item[2.] every internal vertex $j$ is associated with $r$ of its incident edges and declares the other endvertices as its children 
%$\mathcal{C}(j)$ and $j$ is their parent.  
%(One would simply choose $r$ of its neighbours, had there been no multiple edges in the graph.)
%The sets satisfy the following property: for any distinct $j,j'$, where $j,j'$ are either internal vertices or the root, we have 
%$j \not \in \mathcal{C}(j')$ and $j' \not \in \mathcal{C}(j)$. In other words, it cannot be the case that $j$ is the parent of $j'$ 
%and $j'$ is simultaneously the parent of $j$; 
%\item[3.] \emph{only} the leaves are all among the set of initially infected vertices and each of them is a child of some internal vertex.  
%\end{enumerate}
%If $S_i$ is not a tree, then there will be vertices with more than one parent. 

Our aim is to bound from above the expected number of witness structures that are rooted at $i$. 
To this end, we will bound this expected value by the expected number of occurrences of a tree which 
is produced from this witness structure through a bounded number of transformations. Informally, during each transformation 
we ``destroy" vertices which belong to cycles in this witness structure. 
Eventually,  having destroyed all such vertices we will obtain a tree whose vertices are equipped with certain valuation functions. 
We finally bound the expected number of occurrences of this tree using Lemma~\ref{fLemma}.

As with trees, we let $\overrightarrow{\mathcal{S}}$ denote an isomorphism class of a witness structure. This can be viewed 
as a directed graph whose vertices $x_0,\ldots, x_N$ are variables taking values in $[t]$, that satisfies 
Conditions {\bf (1)} and {\bf (3)}. We assume that its root is $x_0$.  If $S$ is a witness structure on $[t]$ that is isomorphic to
$\overrightarrow{\mathcal{S}}$, where adjacent vertices are compatible with the directions of the corresponding edges of 
$\overrightarrow{\mathcal{S}}$, then we write $S \in \overrightarrow{\mathcal{S}}$. 
We let $\overrightarrow{\mathcal{S}_i}$ denote the subset of the isomorphism class $\overrightarrow{\mathcal{S}}$, where 
the root is vertex $i$. That is, $x_0=i$. 

Let $X_{\overrightarrow{\mathcal{S}_i}}$ count the
number of copies $S_i \in \overrightarrow{\mathcal{S}_i}$ such that $S_i \subseteq \PA_t(m,\d)$ and $L=\text{leaves}(S_i) \subseteq \mathcal{I}_0$. We have
\begin{equation} \label{mainEq_Gen}
\E\left [X_{\overrightarrow{\mathcal{S}_i}}\right] = \bfrac{1}{\omega t^\g}^{|L|}\sum_{S_i \in \overrightarrow{\mathcal{S}_i}}\Pr\brac{S_i \subseteq \PA_t(m,\d)}.
\end{equation}
Using Corollary~\ref{edgesJointProb} we have  
\begin{equation*}
\sum_{S_i \in \overrightarrow{\mathcal{S}_i}}\Pr\brac{S_i \subseteq \PA_t(m,\d)}
\leq C_2(m,\d,r) \sum_{S_i \in \overrightarrow{\mathcal{S}_i}}\prod_{(a,b) \in E(S_i)}\frac{1}{b^\g a^{1-\g}}  .
\end{equation*}
where $C_2(m,\d,r)$ is some constant that depends only on $m, \d, r$ and $E(S_i)$ denotes the edge set of $S_i$. 

As in the case of trees, we will consider the notion of a \emph{generalised witness structure}, where  
each vertex $x_a$ is associated with a \emph{valuation function} $v_a : [t] \rightarrow \mathbb{R}^+$. 
The valuation functions we consider are as those we considered in the previous section.

Given such a witness structure $\overrightarrow{\mathcal{S}}$, we will define a function 
$f_{\overrightarrow{\mathcal{S}}}: [t] \rightarrow \mathbb{R}^+$, which generalises the weight function of 
a tree that was defined in the previous sub-section. 
When the valuation functions are trivial, then $f_{\overrightarrow{\mathcal{S}}}(i)$
is (up to multiplicative constants) the expected 
number of occurrences of $\overrightarrow{\mathcal{S}}$ rooted at $i$, in the product space of $\PA_t(m,\d)$ and the set of initially
infected vertices. Assume that the vertices of 
$\overrightarrow{\mathcal{S}}$ are $x_0, ,\ldots, x_N$, where $x_0$ is the root. We will be associating the index $j_a$ with the variable $x_a$. 
Also, recall that the edges of $\overrightarrow{\mathcal{S}}$ are directed and 
therefore the edges are \emph{ordered} pairs. 
Letting $j_0=i$, we set 
\begin{equation*} \label{eq:f_Def} 
f_{\overrightarrow{\mathcal{S}}} (i) = v_0(i) \sum_{j_1,\ldots ,j_N} \prod_{a=1}^N v_a (j_a) \prod_{(x_a,x_b) \in E(\overrightarrow{\mathcal{S}_i})} 
\frac{1}{j_b^\g j_a^{1-\g}} \mathbf{1}_{\{ j_a > j_b \}}.
\end{equation*}
It is not hard to see that if $\overrightarrow{\mathcal{S}}$ is a tree, then the above function coincides with the function $f_0(i)$.

Fix a directed isomorphism class $\overrightarrow{\mathcal{S}}$.  
We demonstrate how a sequence of transformations can 
transform $\overrightarrow{\mathcal{S}}$ into a tree isomorphism class $\overrightarrow{\mathcal{T}}$, such that each class in
the sequence is an upper bound (in terms of expectation of witness structures) for the previous. 
Note that by Lemmas~\ref{noPar} and~\ref{NoInfectedOnSmallCycle}, it suffices to consider witness structures of bounded depth where all
initially infected vertices have degree 1. 

Let $x_a$ be a vertex on a cycle such that it has maximum depth (as defined above in terms of the parent-child relation) 
among all vertices on cycles. 
Let $\overrightarrow{\mathcal{T}}(a)$ be the sub-tree rooted at vertex $x_a$.  
We apply Lemma~\ref{fLemma} to $\overrightarrow{\mathcal{T}}(a)$ and obtain 
$$ f_{\overrightarrow{\mathcal{T}}(a)} (j_a) \lesssim {1\over \omega^{\ell (a)}} \frac{(\log j_a )^{\rho (a)}}{j_a^{y_a}}, $$
where $\rho (a)$ and $y_a$ are as in Lemma~\ref{fLemma}. In particular, $y_a = A \gamma + B(1-\gamma)$, where $A, B$ are non-negative 
integers that satisfy $A+B \geq r$.

We are now ready to define the witness structure $T \overrightarrow{\mathcal{S}}$. 
Assume that $x_a$ has $k> 1$ parents $x_{a_1},\ldots, x_{a_k}$ (not necessarily distinct). Also assume that $x_a$ is connected to $x_{a_1},\ldots, x_{a_h}$ through up edges and to $x_{a_{h+1}},\ldots x_{a_k}$ through 
down edges, where $0\leq h \leq k$. Let $\Delta$ now be the index of a parent of the highest depth among $x_{a_1},\ldots, x_{a_h}$, if
$h>0$. To construct $T \overrightarrow {\mathcal{S}}$ 
\begin{enumerate} 
\item[1.] remove $\overrightarrow{\mathcal{T}}(a)$ together with $x_a$;
\item[2.] multiply $v_{a_\Delta} (j_{a_\Delta})$ by
$(\log j_{a_\Delta})^{\rho(a)}j_{a_{\Delta}}^{- y_a}$; 
\item[3.] multiply $v_{a_i} (j_{a_i})$ by  $j_{a_i}^{-\gamma}$, for all $i\not = \Delta$ and $i\leq h$;
\item[4.] multiply $v_{a_i}(j_{a_i})$ by $(\log  j_{a_i})^{\rho(a)+1} / j_{a_i}^{(1-\gamma) \wedge y_a}$, for all $i = h+1,\ldots, k$.
\end{enumerate}
If one of the $x_{a_i}$s is connected to $x_a$ through parallel edges, then the appropriate step from the above is applied once for each 
edge. For the particular case of $x_{a_\Delta}$, Step 2 is applied once for one of the parallel edges, whereas for the others we apply 
Step 3. If the parallel edges are down edges, then we apply Step 4 once for each of them. 

Note that if the valuation functions $v_{a_i}$ which are modified have exponents $e_{a_i}$ satisfying Property (A), then the modifications
incurred by Steps 2-4 preserve this property. Steps 3 and 4 simply remove a child of $x_{a_i}$ and add to the exponent $e_{a_i}$ a $\gamma$ 
or a $y_a \wedge 1-\gamma$, thus preserving Property (A). Step 2 removes a child of $x_{a_\Delta}$ and adds $y_a$ to $e_{a_{\Delta}}$. 
But $y_a =  A \gamma + B(1-\gamma)$, for some non-negative integers $A, B$ that satisfy $A+B \geq r$. Hence, Property (A) is also 
preserved for this exponent. 
% Then
% \begin{enumerate}
% \item[Case 1:] if $x_a$ is connected to them only through down edges, we remove $T(a)$ together with $x_a$. 
% If $y_a$, $\rho (a)$ denotes the exponent of $j$ and $\log j$, respectively, in the upper bound on $f_a (j)$ from Lemma~\ref{fLemma},
%  then we multiply 
% $v_{a_i}(j_{a_i})$ by $(\log  j_{a_i})^{\rho(a)+1} / j_{a_i}^{(1-\gamma) \wedge y_a}$, for all $i = 1,\ldots, k$;
% \item[Case 2:] if $x_a$ is connected to $x_{a_1},\ldots, x_{a_k}$ only through up edges, let $\Delta$ be the index of a parent of the highest
% depth among $x_{a_1},\ldots, x_{a_k}$. Erase $T(a)$ and $x_a$; 
% multiply $v_{a_\Delta} (j_{a_\Delta})$ by $(\log j_{a_\Delta})^{\rho (a)}/j_{a_\Delta}^{y_a}$ and multiply $v_{a_i} (j_{a_i})$
% by  $j_{a_i}^{-\gamma}$, for all $i\not = \Delta$;
% \item[Case 3:] if $x_a$ is connected to $x_{a_1},\ldots, x_{a_h}$ through up edges and to $x_{a_{h+1}},\ldots x_{a_k}$ through 
% down edges, then let $\Delta$ now be the index of a parent of the highest
% depth among $x_{a_1},\ldots, x_{a_h}$. Erase $T(a)$ and $x_a$; multiply $v_{a_\Delta} (j_{a_\Delta})$ by
% $(\log j_{a_\Delta})^{\rho(a)}j_{a_{\Delta}}^{-(k-h)\g -(h-1)(1-\g)- y_a}$, multiply $v_{a_i} (j_{a_i})$
% by  $j_{a_i}^{-\gamma}$, for all $i\not = \Delta$.
%% with $i \leq h$ and for $i>h$ multiply $v_{a_i} (j_{a_i})$ by 
%%$\log j_{a_i}/ j_{a_i}^{1-\gamma}$. 
% \end{enumerate}

%Let $e_a$ and ($(Te)_a$ respectively) the exponent of $j$ in $v_a (j)$ for $x_a \in \overrightarrow{\mathcal{S}_i}$ 
%($x_a \in T \overrightarrow{\mathcal{S}_i}$, resp.). 
Steps 2-4 yield 
\begin{equation} \label{eq:exp_induction} 
\sum_{a' \in T \overrightarrow{\mathcal{S}}} e_{a'} = \sum_{a' \in \overrightarrow{\mathcal{S}} 
\setminus \overrightarrow{\mathcal{T}}(a)} e_{a'} 
+ \begin{cases} k\min \{1-\g, y_a \} & \ \mbox{if $h=0$} \\
(h-1)\g + y_a + (k-h)\min \{1-\g, y_a \} & \ \mbox{if $h>0$}
\end{cases}.
\end{equation}
As we shall see in the proof of the next lemma, Steps 2-4 essentially correspond to a step among a sequence of 
steps that transform $\overrightarrow{\mathcal{S}}$ into a tree. 
In each step, we have the creation of copies of $x_a$, which we denote by $x_{a^{(1)}}, \ldots, x_{a^{(k)}}$, where $x_{a^{(i)}}$ is 
attached to $x_{a_i}$ through an up edge if $i\leq h$ or through an down edge if $i>h$.  
Thereafter, $x_{a^{(\Delta)}}$ as well as $x_{a^{(i)}}$, for $i>h$, each becomes the root of a copy of 
$\overrightarrow{\mathcal{T}}(a)$, 
whereas for the remaining $i$s, the vertices $x_{a^{(i)}}$ become original leaves (cf. Figure~\ref{fig2}). 
We denote the resulting directed graph by $\hat{T} \overrightarrow{\mathcal{S}}$.
Note that this is not the directed graph $T \overrightarrow{\mathcal{S}}$. 
The latter may be thought as coming from $\hat{T} \overrightarrow{\mathcal{S}}$
with the subtrees rooted at each $x_{a^{(i)}}$ 
\emph{contracted} into $x_{a_i}$, multiplying the corresponding valuation functions of $x_{a_i}$ by certain factors, as in Steps 2-4. 
These factors are upper bounds 
on the probability that $x_{a^{(i)}}$ will be infected through the sub-tree that is rooted at it. 
\smallskip

\noindent {\bf Remark} Note also that the depth of $\hat{T} \overrightarrow{\mathcal{S}}$ is equal to the depth of
$\overrightarrow{\mathcal{S}}$. This is the case as all $x_{a^{(i)}}$, for $i>h$, are the roots of a copy of
$\overrightarrow{\mathcal{T}}(a)$ as well as
$x_{a^{(\Delta)}}$. The latter is adjacent to the deepest parent $x_{a_{\Delta}}$ among the $x_{a_i}$s, for $i\leq h$.  

Because of (\ref{mainEq_Gen}), we are interested in the case where the initial witness structure $\overrightarrow{\mathcal{S}}$ has only
trivial valuation functions. In this case, $\sum_{a' \in \overrightarrow{\mathcal{S}}} e_{a'} =0$.   
Assume that each time we apply $T$, we have $1-\g \geq y_a$. 
It follows then from
(\ref{eq:exp_induction}) that during the $j$th transformation the sum of the exponents of the valuation functions increases by 
$\gamma \ell_j$, where $\ell_j$ is the number of leaves that are added during the transition from 
$\hat{T}^{(j-1)} \overrightarrow{\mathcal{S}}$ to $\hat{T}^{(j)} \overrightarrow{\mathcal{S}}$. 
Assume that the process stops after step $j_0$. Thus, $\hat{T}^{(j_0)} \overrightarrow{\mathcal{S}}$ is an $r$-ary tree where all 
its leaves are original and, by the above remark, has depth that is equal to the maximum depth in $\overrightarrow{\mathcal{S}}$. 
If $L_{j_0}$ is the number of leaves of this tree and $\ell ( T^{(j_0)} \overrightarrow{\mathcal{S}} )$ is the number of original leaves 
of $T^{(j_0)} \overrightarrow{\mathcal{S}}$, then 
\begin{equation} \label{eq:leaves} 
L_{j_0}\g = \sum_{a' \in T^{(j_0)} \overrightarrow{\mathcal{S}}} e_{a'} 
+ \ell ( T^{(j_0)} \overrightarrow{\mathcal{S}} )\g. 
\end{equation}

We will use this fact towards the end of our analysis. We now proceed with our basic inductive step which will allow us to bound 
$f_{\overrightarrow{\mathcal{S}}}$ after the application of a sequence of transformations $T$. 

\begin{figure}[htp] 
 \centering
 \includegraphics[scale=0.6]{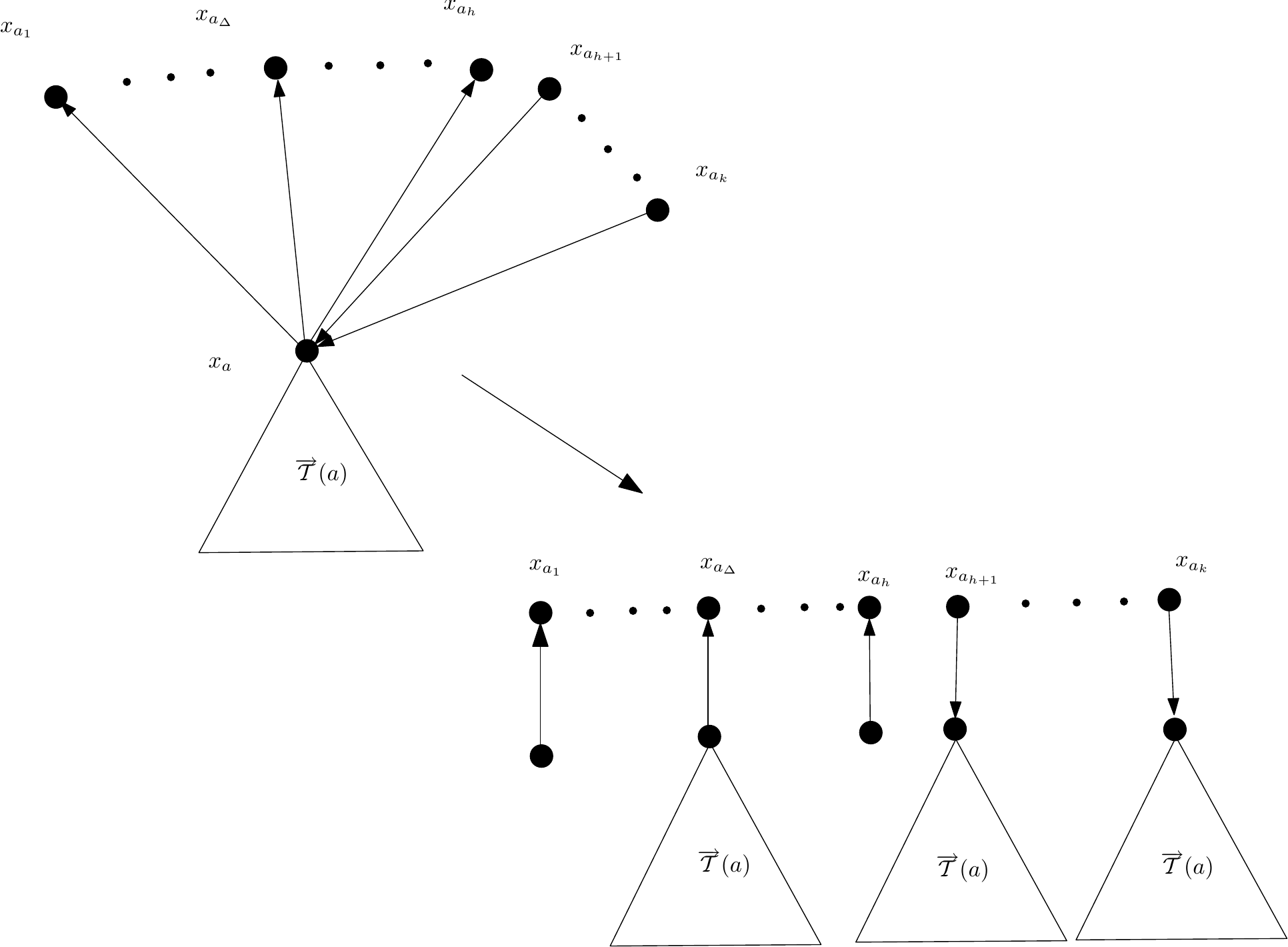}
 \caption{Transformation $\hat{T}$}
 \label{fig2}
\end{figure}

% \begin{figure}[htp] 
%  \centering
%%\vspace{4cm}
%  
%%\hspace{4cm}
%% \includegraphics[scale=0.3, bb=0 0 30 30]{drop_fat.jpg}
%  \caption{Transformation $T$: case 3}
%  \label{fig3}
%  \end{figure}
\begin{lemma} \label{lem:fLemma_Induction} 
Let $\overrightarrow{\mathcal{S}}$ be a witness structure and $x_a$ be a vertex of maximum depth on which we perform 
the above transformation. If $\ell (a)$ denotes the number of original leaves in $\overrightarrow{\mathcal{T}}(a)$, then uniformly for all $i \in [t]$
$$ f_{\overrightarrow{\mathcal{S}}} (i)\lesssim \frac{1}{\omega^{\ell (a)}}~ f_{T \overrightarrow{\mathcal{S}}} (i).$$
\end{lemma}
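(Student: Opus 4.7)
The plan is to isolate the subtree $\overrightarrow{\mathcal{T}}(a)$ within the sum defining $f_{\overrightarrow{\mathcal{S}}}(i)$, bound its contribution via Lemma~\ref{fLemma}, and then perform the remaining one-dimensional sum over $j_a$. Because $x_a$ is chosen of maximum depth among vertices on cycles, $\overrightarrow{\mathcal{T}}(a)$ is an honest tree and Lemma~\ref{fLemma} applies, giving $f_{\overrightarrow{\mathcal{T}}(a)}(j_a)\lesssim \omega^{-\ell(a)}(\log j_a)^{\rho(a)}/j_a^{y_a}$ with $y_a=A\gamma+B(1-\gamma)$ and $A+B\geq r$. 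Conditioning on the values of all variables outside $\overrightarrow{\mathcal{T}}(a)$ (which are shared with $T\overrightarrow{\mathcal{S}}$), the desired inequality reduces to showing
\begin{equation*}
  \sum_{j_a} f_{\overrightarrow{\mathcal{T}}(a)}(j_a) \prod_{i\leq h}\frac{1}{j_{a_i}^{\gamma}j_a^{1-\gamma}}\prod_{i>h}\frac{1}{j_a^{\gamma}j_{a_i}^{1-\gamma}} \lesssim \frac{1}{\omega^{\ell(a)}}\prod_{i=1}^k \mathcal{M}_i(j_{a_i}),
\end{equation*}
where $\mathcal{M}_i$ denotes the modification factor prescribed at $x_{a_i}$ by Steps~2--4 and the sum runs over $\max_{i\leq h}j_{a_i} < j_a < \min_{i>h}j_{a_i}$.

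Substituting the Lemma~\ref{fLemma} bound and pulling out the factors of $j_{a_i}$ independent of $j_a$ leaves a univariate sum $\sum_{j_a}(\log j_a)^{\rho(a)}/j_a^E$ with $E = y_a + (1-\gamma)h + \gamma(k-h)$, which I would bound by the corresponding integral via Lemma~\ref{lem:comb_integral} from Section~\ref{sec:integral}. When $h\geq 1$, a short case split on $B\in\{0,1,\geq 2\}$ together with $\gamma\leq 1/2$ and $A+B\geq 2$ shows $y_a+(1-\gamma)>1$, hence $E>1$: the integral converges at infinity and is dominated by its lower endpoint, and replacing the true lower limit by the smaller value $j_{a_\Delta}$ (using monotonicity of the integrand) yields a bound of $(\log j_{a_\Delta})^{\rho(a)}/j_{a_\Delta}^{E-1}$. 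Combined with the pulled-out $j_{a_\Delta}^{-\gamma}$ this reproduces the Step~2 modification; the pulled-out $j_{a_i}^{-\gamma}$ for the remaining up-edge parents is exactly Step~3; and the pulled-out $j_{a_i}^{-(1-\gamma)}$ for each down-edge parent dominates the Step~4 factor since $(1-\gamma)\wedge y_a\leq 1-\gamma$ and $(\log j_{a_i})^{\rho(a)+1}\geq 1$.

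When $h=0$ the sum carries no lower constraint, and a case split on whether $E$ is above, equal to, or below $1$ bounds it, via Lemma~\ref{lem:comb_integral}, by $(\log j^*)^{\rho(a)+1}(j^*)^{(1-E)\vee 0}$, where $j^*=\min_{i>h}j_{a_i}$. Since $j^*\leq j_{a_i}$ for every $i>h$, this single factor can be distributed across the $k$ down-edge parents as $\prod_{i>h}(\log j_{a_i})^{\rho(a)+1}/j_{a_i}^{(1-\gamma)\wedge y_a}$ after combining with the pulled-out $j_{a_i}^{-(1-\gamma)}$, matching Step~4. The main obstacle is the bookkeeping needed to verify, across every regime of $(h,k,y_a,\gamma)$---and in particular at the boundary $E=1$, where an additional logarithmic factor emerges from the integral---that the coarse exponent $(1-\gamma)\wedge y_a$ and the extra log in Step~4 are always large enough to absorb what the integration produces. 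Parallel edges between $x_a$ and a single parent are handled by applying the relevant step once per edge, per the remark following the definition of $T$; this adds layers of accounting but no fundamentally new idea.
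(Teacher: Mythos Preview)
Your proposal is correct and follows essentially the same route as the paper: factor $f_{\overrightarrow{\mathcal{S}}}(i)$ so that the inner sum is over $j_a$ alone, apply Lemma~\ref{fLemma} to the subtree $\overrightarrow{\mathcal{T}}(a)$, bound the resulting one-variable sum by an integral, and then check that what survives is dominated term-by-term by the modification factors of Steps~2--4. The paper carries this out in (\ref{eq:f_rewrite})--(\ref{eq:f_rewrite_upper_I}) with the same $h>0$ / $h=0$ case split and the same use of $j_{a_\Delta}\le j_{a_1}\vee\cdots\vee j_{a_h}$ to relax the lower limit.

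Two small points of precision. First, in the $h\geq 1$ case you write that the integral bound ``reproduces'' the Step~2 factor; in fact the exponent you obtain on $j_{a_\Delta}$ is $E-1+\gamma = y_a+(h-1)(1-\gamma)+(k-h)\gamma\geq y_a$, so it \emph{dominates} rather than matches Step~2 exactly --- the inequality goes the right way, so the argument is fine. Second, Lemma~\ref{lem:comb_integral} as stated handles $\int_j^t(\log x)^k x^{-1-\alpha}\,dx$ with $\alpha>0$; it does not directly cover the $h=0$ integral $\int_1^{j^*}(\log z)^{\rho(a)}z^{-E}\,dz$, where $E$ may be $\leq 1$. Your three-way case split on $E$ versus $1$ is the right substitute (and is what the paper does explicitly at (\ref{785yt8fosvkh0})), but the citation to Lemma~\ref{lem:comb_integral} there is misplaced.
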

\begin{proof}
% Assume that the exponent of $j$ is $x$ and that it satisfies $\g<x<C(m,\d, r)$ where $C(m,\d, r)$ is a constant that depends only on $m$,
% $\d$ and $r$. We shall justify 
% this assumption below.

Let $x_a$ be a vertex of $\overrightarrow{\mathcal{S}}$ of maximum depth. Also, 
we denote the set of indices of the vertices of $\overrightarrow{\mathcal{T}}(a)$ by $\mathcal{T}(a)$. 
Let $x_{a_1}, \ldots, x_{a_k}$ be the parents of $x_a$. Denote by $\mathcal{P}(a)$ the set of indices of $\overrightarrow{\mathcal{S}}$ 
not in $\mathcal{T}(a)$. Note that $a_i \in \mathcal{P} (a)$, for $i=1,\ldots, k$.  
Finally assume that the edges $(x_a,x_{a_1}), \ldots, (x_a,x_{a_h})$,
where $0 \leq h \leq k$ are all up edges and $(x_a,x_{a_{h+1}}),\ldots, (x_a,x_{a_{k}})$ are all down edges. 

Now for a set of indices $\mathcal{S}$ we define the function 
$$ f_{\mathcal{S}} (j_a \ :\ a \in \mathcal{S}) = \prod_{a \in \mathcal{S}} v_a (j_a) \prod_{(x_a,x_b) \in E (\mathcal{S} )} 
\frac{1}{j_b^\gamma j_a^{1-\gamma}} \mathbf{1}_{\{ j_a > j_b \}},$$
where $E(\mathcal{S})$ denotes the set of directed edges that is induced by $\mathcal{S}$. 
Using this, we write 
\begin{equation} \label{eq:f_rewrite} 
\begin{split}
&f_{\overrightarrow{\mathcal{S}}}(i)  
= \\
& \sum_{j_{a'} \ : \ a' \in \mathcal{P}(a)\setminus \{a_1,\ldots, a_k \}} \sum_{j_{a_1},\ldots, j_{a_k}} 
f_{\mathcal{P}(a)} (j_{a'}\ : \ a' \in \mathcal{P}(a)) \sum_{j_a >j_{a_1} \vee \cdots \vee j_{a_h}}^{j_{a_{h+1}} \wedge \cdots \wedge
j_{a_k} \wedge t}  f_a (j_a) \prod_{i=1}^h \frac{1}{j_{a_i}^\gamma j_{a}^{1-\gamma}} 
\prod_{i=h+1}^k  \frac{1}{j_a^\gamma j_{a_i}^{1-\gamma}},
\end{split}
\end{equation}
where $f_a$ is the weight function of $\overrightarrow{\mathcal{T}}(a)$. 
We will take an upper bound for each case of the definition of $T$ with the use of Lemma~\ref{fLemma}. 
In particular, using Lemma~\ref{fLemma} we will obtain an upper bound on 
\begin{equation} \label{eq:part_of_f}
\sum_{j_a >j_{a_1} \vee \cdots \vee j_{a_h}}^{j_{a_{h+1}} \wedge \cdots \wedge j_{a_k} \wedge t} 
 f_a (j_a) \prod_{i=1}^h \frac{1}{j_{a_i}^\gamma j_{a}^{1-\gamma}} 
\prod_{i=h+1}^k  \frac{1}{j_{a}^\gamma j_{a_i}^{1-\gamma}}.
\end{equation}
(If $h=k$, then we let $j_{a_{h+1}} \wedge \cdots \wedge j_{a_k} \wedge t=t$.)
% If $(j,j_a)$ is up and $(j,j_b)$ is down then it must be the case that $j_a<j_b$. Without loss of generality, assume 
% $j_1 \leq j_2 \leq \ldots \leq j_h<j<h_{h+1} \leq \ldots j_k$. We do not insist on all being strict inequalities 
% because we allow for parallel edges.

Applying Lemma~\ref{fLemma} to $f_a (j_a)$, we obtain
\begin{equation}
f_a (j_a) \lesssim \frac{1}{\omega^{\ell(a)}} \frac{(\log j_a )^{\rho (a)}}{j_a^{y_a}}, 
\end{equation}
where $y_a$ is as in Lemma~\ref{fLemma}. 

Assume first that $h>0$. Then with $\Delta$ as above, we have
\begin{equation} \label{eq:sum_upper}
\begin{split}
&\sum_{j_a=j_{a_{1}} \vee \cdots \vee j_{a_h}}^{j_{a_{h+1}} \wedge \cdots \wedge j_{a_k} \wedge t} \prod_{i=1}^h 
\frac{1}{j_{a_i}^\gamma j_{a}^{1-\gamma}} 
\prod_{i=h+1}^k  \frac{1}{j_{a}^\gamma j_{a_i}^{1-\gamma}}  f_a (j_a) \lesssim \\
& \frac{1}{\omega^{\ell(a)}} \sum_{j_a=j_{a_{1}} \vee \cdots \vee j_{a_h}}^{j_{a_{h+1}} \wedge \cdots \wedge j_{a_k} \wedge t}
\frac{1}{j_{a_1}^\g j_a^{1-\g}}\ldots\frac{1}{j_{a_h}^\g j_a^{1-\g}}\frac{1}{j_a^\g j_{a_{h+1}}^{1-\g}}
\dots\frac{1}{j_a^\g j_{a_k}^{1-\g}} \frac{(\log j_a )^{\rho (a)}}{j^{y_a}}  \\
&\quad \lesssim  \frac{1}{\omega^{\ell(a)}} \frac{1}{j_{a_1}^\g}\ldots\frac{1}{j_{a_h}^\g}\frac{1}{j_{a_{h+1}}^{1-\g}}\ldots\frac{1}{j_{a_k}^{1-\g}}
\int_{j_a=j_{a_{1}} \vee \cdots \vee j_{a_h}}^{j_{a_{h+1}} \wedge \cdots \wedge j_{a_k} \wedge t}
(\log z )^{\rho (a)} z^{-(k-h)\g -h(1-\g)-y_a} \, \mathrm{d}z  \\ 
& \leq  \frac{1}{\omega^{\ell(a)}} \frac{1}{j_{a_1}^\g}\ldots\frac{1}{j_{a_h}^\g}\frac{1}{j_{a_{h+1}}^{1-\g}}\ldots\frac{1}{j_{a_k}^{1-\g}}
\int_{j_{a_\Delta}}^{t} (\log z )^{\rho (a)}z^{-(k-h)\g -h(1-\g)-y_a} \, \mathrm{d}z.
\end{split}
\end{equation}
But by Lemma~\ref{lem:comb_integral} in Section~\ref{sec:integral} of the Appendix, we have 
\begin{equation*}
\begin{split}
\frac{1}{j_{a_\Delta}^{\g}} \int_{j_{a_\Delta}}^{t} (\log z )^{\rho (a)}z^{-(k-h)\g -h(1-\g)-y_a} &\lesssim
\frac{1}{j_{a_\Delta}^{\g}}~\frac{(\log j_{a_\Delta} )^{\rho (a)}}{j_{a_{\Delta}}^{-1+(k-h)\g +h(1-\g)+y_a}} \\
&= 
\frac{(\log j_{a_\Delta} )^{\rho (a)}}{j_{a_{\Delta}}^{(k-h)\g +(h-1)(1-\g)+y_a}} \leq \frac{(\log j_{a_\Delta} )^{\rho (a)}}{j_{a_{\Delta}}^{y_a}}.
\end{split}
\end{equation*}
Thereby, (\ref{eq:sum_upper}) becomes
\begin{equation} \label{eq:sum_upper_I}
\begin{split}
&\sum_{j_a=j_{a_{1}} \vee \cdots \vee j_{a_h}}^{j_{a_{h+1}} \wedge \cdots \wedge j_{a_k} \wedge t} \prod_{i=1}^h 
\frac{1}{j_a^\gamma j_{a_i}^{1-\gamma}} 
\prod_{i=h+1}^k  \frac{1}{j_{a_i}^\gamma j_{a}^{1-\gamma}}  f_a (j_a) \lesssim  \frac{1}{\omega^{\ell(a)}} 
\frac{1}{j_{a_1}^\g}\ldots\frac{(\log j_{a_\Delta})^{\rho(a)}}{j_{a_\Delta}^{y_a}}
\frac{1}{j_{a_{\Delta+1}}^\g} \ldots \frac{1}{j_{a_h}^\g}\frac{1}{j_{a_{h+1}}^{1-\g}}\ldots\frac{1}{j_{a_k}^{1-\g}}.
\end{split}
\end{equation}

Now, if $h=0$, then (\ref{eq:part_of_f}) yields
\begin{eqnarray}
\sum_{j_a=1}^{j_{a_1} \wedge \cdots \wedge j_{a_k}}\frac{1}{j_a^\g j_{a_1}^{1-\g}}
\dots\frac{1}{j_a^\g j_{a_k}^{1-\g}}\frac{(\log j_a)^{\rho (a)}}{j_a^{y_a}}
&\lesssim& \frac{(\log j_{a_1})^{\rho (a)}}{j_{a_1}^{1-\g}}\ldots\frac{(\log j_{a_k})^{\rho (a)}}{j_{a_k}^{1-\g}}\int_1^{j_{a_k}} z^{-k\g-y_a} \, \mathrm{d}z \nonumber \\
&\lesssim& \left\{ 
  \begin{array}{l l}
     \frac{(\log j_{a_1})^{\rho (a)}}{j_{a_1}^{1-\g}}\ldots\frac{(\log j_{a_k})^{\rho (a)}}{j_{a_k}^{1-\g}}j_{a_k}^{1-k\g-y_a}  & \quad \text{if $1-k\g>y_a$  }\nonumber \\
     \frac{(\log j_{a_1})^{\rho (a)+1}}{j_{a_1}^{1-\g}}\ldots\frac{(\log j_{a_k})^{\rho (a)+1}}{j_{a_k}^{1-\g}}  & \quad \text{if $1-k\g=y_a$  } \nonumber \\
     \frac{(\log j_{a_1})^{\rho (a)+1}}{j_{a_1}^{1-\g}}\ldots\frac{(\log j_{a_k})^{\rho (a)+1}}{j_{a_k}^{1-\g}}& \quad \text{  if $1-k\g<y_a$  }
  \end{array} \right.  \nonumber \\ 
&& \label{785yt8fosvkh0}
\end{eqnarray}
In the first case, we have $y_a < 1- \g$, since $y_a < 1- k\g$. Thus the last factor is
$$\frac{1}{j_{a_k}^{(k-1)\g + y_a}} \leq \frac{1}{j_{a_k}^{y_a}} = \frac{1}{j_{a_k}^{(1-\g) \wedge y_a}} .$$
Hence, in any case (\ref{785yt8fosvkh0}) is bounded by 
\begin{equation} \label{eq:h=0}
\sum_{j_a=1}^{j_{a_1} \wedge \cdots \wedge j_{a_k}}\frac{1}{j_a^\g j_{a_1}^{1-\g}}
\dots\frac{1}{j_a^\g j_{a_k}^{1-\g}}\frac{(\log j_a)^{\rho (a)}}{j_a^{y_a}} \lesssim \prod_{i=1}^{k} 
\frac{(\log j_{a_i})^{\rho (a)+1}}{j_{a_i}^{(1-\g) \wedge y_a}}.
\end{equation}

Setting 
$$ \hat{v}_{a_i} (j_{a_i}) :=
\begin{cases}
\frac{1}{j_{a_i}^{\g }} & \ \mbox{if $i\leq h$ and $i \not =\Delta$},\\
(\log j_{a_i})^{\rho (a)} \cdot \frac{1}{j_{a_i}^{y_a}}, & \ \mbox{if $i =\Delta$ }, \\
 (\log j_{a_i})^{\rho (a) +1} \cdot \frac{1}{j_{a_i}^{(1-\g) \wedge y_a}} & \ \mbox{if $i >h$}, 
\end{cases} 
$$
now (\ref{eq:sum_upper_I}) and (\ref{eq:h=0}) yield
\begin{eqnarray}
\sum_{j_a > j_{a_1} \vee \cdots \vee j_{a_h}}^{j_{a_{h+1}} \wedge \cdots \wedge j_{a_k} \wedge t} 
\prod_{i=1}^h \frac{1}{j_a^\gamma j_{a_i}^{1-\gamma}} \prod_{i=h+1}^k  \frac{1}{j_{a_i}^\gamma j_{a}^{1-\gamma}} f_a (j_a) 
\lesssim&  \frac{1}{\omega^{\ell(a)}} \prod_{i=1}^k \hat{v}_{a_i} (j_{a_i}).
\label{8uy4tuh2}
\end{eqnarray}
So, substituting the bound of (\ref{8uy4tuh2}) into (\ref{eq:f_rewrite}) we obtain
\begin{equation} \label{eq:f_rewrite_upper_I} 
\begin{split}
f_{\overrightarrow{\mathcal{S}}}(i)  
&\lesssim \frac{1}{\omega^{\ell (a)}}~\sum_{j_{a'} \ : \ a' \in \mathcal{P}(a)\setminus \{a_1,\ldots, a_k \}} \sum_{j_{a_1},\ldots, j_{a_k}} 
f_{\mathcal{P}(a)} (j_{a'}\ : \ a' \in \mathcal{P}(a)) \prod_{i=1}^k \hat{v}_{a_i}(j_{a_i}) \\
&= \frac{1}{\omega^{\ell (a)}}
 \sum_{j_{a'} \ : \ a' \in \mathcal{P}(a)\setminus \{a_1,\ldots, a_k \}} \sum_{j_{a_1},\ldots, j_{a_k}}  
\prod_{a' \in \mathcal{P}(a)\setminus \{a_1,\ldots, a_k \}} v_{a'} (j_{a'}) \times \\
& \hspace{3cm} \prod_{i=1}^{k} v_{a_i} (j_{a_i}) \hat{v}_{a_i} (j_{a_i})
\prod_{(x_{a'},x_{b'}) \in E (\mathcal{P}(a))} 
\frac{1}{j_{b'}^\gamma j_{a'}^{1-\gamma}} \mathbf{1}_{\{ j_{a'} > j_{b'} \}} \\ 
&=\frac{1}{\omega^{\ell (a)}} f_{T\overrightarrow{\mathcal{S}_i}}(i).
\end{split}
\end{equation}
Note that the upper bounds in Claim~\ref{clm:sums_cases} imply that 
$\prod_{i=1}^k \hat{v}_{a_i} (j_{a_i})$ is the bound we would get if $x_a$ is replicated $k$ times into 
$x_{a^{(1)}}, \ldots, x_{a^{(k)}}$ and $x_{a^{(i)}}$ is 
attached to $x_{a_i}$ through an up edge if $i\leq h$ or through an down edge if $i>h$.  
Thereafter, $x_{a^{(\Delta)}}$ as well as $x_{a^{(i)}}$, for $i>h$, each becomes the root of a copy of 
$\overrightarrow{\mathcal{T}}(a) $, whereas for the remaining $i$s, the vertices $x_{a^{(i)}}$ become leaves with valuation functions that 
are equal to $1/t^{\gamma}$. Note that the latter is $\omega p_0$ - so essentially these become original leaves. 
\end{proof}

Starting with the original witness structure $\overrightarrow{\mathcal{S}}$, we get a sequence of structures
$T\overrightarrow{\mathcal{S}}, T^{(2)}\overrightarrow{\mathcal{S}},\ldots, T^{(j)}\overrightarrow{\mathcal{S}}$ 
by applying the transformation $T$ in the following way: If $T^{(j-1)}\overrightarrow{\mathcal{S}}$ is a tree, we are done; 
otherwise choose a vertex  $x_a$ of $T^{(j-1)}\overrightarrow{\mathcal{S}}$ such that $x_a$ is on a cycle 
and has maximum depth among such vertices in $T^{(j-1)}\overrightarrow{\mathcal{S}}$. 
Now apply to $x_a$ and its parents the transformation $T$, as appropriate, to
get $T^{(j)}\overrightarrow{\mathcal{S}}$. Note that $T^{(j)}\overrightarrow{\mathcal{S}}$ has at least one less vertex that lies on a
cycle. In general, the number of vertices lying on cycles reduces by one each time we apply the transformation. Hence, there exists a 
$j_0\geq 0$ such that $T^{(j_0)}\overrightarrow{\mathcal{S}}$ is a generalised witness tree. 
Moreover, the depth of this tree is no more than the depth of $\overrightarrow{\mathcal{S}}$.
%In fact, we take the depth of  $\overrightarrow{\mathcal{S}}$ to be equal to $d_0$.

If $x_{a_1}, \ldots, x_{a_{j_0}}$ denote the vertices that were split in each transformation, the repeated application of 
Lemma~\ref{lem:fLemma_Induction} yields
\begin{equation*} 
f_{\overrightarrow{\mathcal{S}}} (i) \lesssim {1\over \omega^{\sum_{j=1}^{j_0}\ell (a_j)}}f_{T^{(j_0)}
\overrightarrow{\mathcal{S}}} (i),
\end{equation*}
where $\ell (a_j)$ is the number of original leaves of $\overrightarrow{\mathcal{T}}(a_j)$ in $T^{(j-1)} \overrightarrow{\mathcal{S}}$.
Observe that $\sum_{j=1}^{j_0}\ell (a_j)$ is the number of leaves in the original witness structure. These leaves are assumed to be original,
and there are at least $r$ of them. 

Since $T^{(j_0)} \overrightarrow{\mathcal{S}}$ is a generalised witness tree, we can apply Lemma~\ref{fLemma} 
and deduce that for some $\rho\geq 0$ and with $\ell$ being the number of original leaves in $T^{(j_0)} \overrightarrow{\mathcal{S}}$
we have 
\begin{equation*} 
f_{\overrightarrow{\mathcal{S}}} (i) \lesssim {1\over \omega^{\ell + \sum_{j=1}^{j_0}\ell (a_j)}} 
\frac{(\log i)^{\rho}}{i^{y}},
\end{equation*}
and either $y$ can be expressed as $A\g + B(1-\g)$ where $A,B$ are non-negative integers such that $A+B\geq r$ and $B>0$ or
$$ y =  \sum_{a' \in T^{(j_0)} \overrightarrow{\mathcal{S}}} e_{a'} 
+ \ell ( T^{(j_0)} \overrightarrow{\mathcal{S}} )\gamma.$$
By  (\ref{eq:leaves}) the latter is equal to $L_{j_0}\gamma$, where $L_{j_0}$ is the number of leaves of 
$\hat{T}^{(j_0)} \overrightarrow{\mathcal{S}}$ . 
But if $\hat{T}^{(j_0)} \overrightarrow{\mathcal{S}}$ is an $r$-ary tree of depth $d_0$ and therefore $\ell_{j_0} \geq d_0$. 
Thus, $y \geq \gamma d_0 >1$. 
Hence (\ref{eq:exp_upper_bound}) also holds in this case, implying that the right-hand side of (\ref{mainEq_Gen}) is $o(1)$. 
Now, if the depth of this tree is less than $d_0$, then by the same principles as in the case of trees we obtain an upper bound
which is $o (t^{1-\g})$. 

This completes the proof of Theorem \ref{Subcritical case}\textbf{(ii)} and \ref{Subcritical case}\textbf{(iii)}.

\section{Critical case}

\begin{proof}[Proof of Theorem \ref{Critical case}\textbf{(i)}]

Let $G$ be a realisation of $\PA_t (m,\delta)$. 
Let $\mathcal{T}(G,d)$ be the set of trees in $G$ which have depth $d$ and for which every internal 
vertex has $r$ children. For a tree $T \in \mathcal{T}(G,d)$ let $A_T$ be the event that all leaves in $T$ are initially infected. 
Note that this is an event on the product space of initial infection, where every vertex is infected independently with probability 
$p$.  Also, note that this is a \emph{non-decreasing} event: if we infect more vertices, then $A_T$ will not stop holding. 

We wish to show $\Pr\brac{\bigcap_{T \in \mathcal{T}(G,d)}A_T^c}>0$. To this end, we apply the FKG inequality  
(see for example Theorem~6.3.2 in~\cite{Janson}):
\begin{eqnarray} \label{8shd8jfjha}
\Pr(\bigcap_{T \in \mathcal{T}(G,d)}A_T^c) &\geq& \prod_{T \in \mathcal{T}(G,d)}\brac{1-\Pr(A_T)} \nonumber \\
&\geq& \exp\brac{-2\sum_{T \in \mathcal{T}(G,d)}\Pr(A_T)},
\end{eqnarray}
where the last inequality follows as $1-x \geq e^{-2x}$ when $x$ is small enough. 
In this case, it will be small enough provided that $t$ is large, since $\Pr(A_T)=p^\ell$ where $\ell$ is the number of leaves in $T$ 
and $p=o(1)$. 

Let $\mathcal{T}(G, d , \ell) \subseteq \mathcal{T}(G, d)$ be those depth-$d$ trees in $G$ with $\ell$ leaves.  We have
\[
\sum_{T \in \mathcal{T}(G,d)}\Pr(A_T)=\sum_{\ell \geq d}\sum_{T \in \mathcal{T}(G, d , \ell)}p^\ell
=\sum_{\ell \geq d}p^\ell\, |\mathcal{T}(G, d , \ell)|
\]

Let $C$ be some large constant, let $\sigma(d,\ell)=\{G \in \PA_t(m,\d): |\mathcal{T}(d , \ell)| \leq C \E [|\mathcal{T}(d , \ell)|] \}$
where $|\mathcal{T}(d , \ell)|$ is the random variable on $\PA_t(m,\d)$ that counts the number of depth-$d$ trees with $\ell$ leaves and
each internal vertex having $r$ children. Let $\sigma(d)=\bigcap_{\ell \geq d}\sigma(d,\ell)$
Then if $G \in \sigma(d)$
\[
\sum_{T \in \mathcal{T}(G,d)}\Pr(A_T) \leq C\sum_{\ell \geq d}p^\ell\, \E [|\mathcal{T}(d , \ell)|]=O(1)
\]
where the last equality follows from~\ref{eq:exp_upper_bound} when $d=d_0$, replacing $\omega$ in $p=p_c/\omega$ (which gave us $o(1)$)
with $1/\lambda$.  

Now 
\[
\Pr(|\mathcal{T}(\PA_t(m,\d), d , \ell)| > C \E [|\mathcal{T}(d , \ell)|]) \leq \frac{1}{C}
\]

hence $\Pr(\PA_t(m,\d) \notin  \sigma(d)) \leq r^{d+1}/C$ since $d\leq \ell \leq r^{d+1}$. Of course, we choose $C>r^{d+1}$. 

Getting back to \eqref{8shd8jfjha}, we see that with probability
at least $1-r^{d+1}/C$,
\[
\Pr(\bigcap_{T \in \mathcal{T}(G,d)}A_T^c | G \in \sigma(d)) \geq \exp\brac{-2\sum_{T \in \mathcal{T}(G,d)}\Pr(A_T)} =\Omega(1).
\]

Consequently, with probability at least $p_1>0$, there is no witness tree of depth $d=d_0$, meaning no infection occurs in this round
or thereafter. 

The same argument applies to witness structures which are not trees. As per above, their expected number of occurrences is 
bounded from above by that of witness trees. 

When $d<d_0$, the results of the previous section show that the expected number of infected vertices in round $d>0$ is $o(t^{1-\g})$.
Hence, the above analysis together with Markov's inequality yields $|\Af|/|\mathcal{I}_0| < 1+\varepsilon$, for $\varepsilon >0$, with 
probability at least $p_1 >0$, for any $t$ large enough.  
%The expected number infected initially is $\Theta(t^{1-\g})$, hence the expected number ever infected is $O(t^{1-\g})$. 
\end{proof}

\begin{proof}[Proof of Theorem \ref{Critical case}\textbf{(ii)}] We wish to show there is a full outbreak. This will happen if, for some 
$k \geq 1$, the first $k$ vertices $[k]$ get
infected, and additionally, no vertex has more than one self-loop. We will show that this happens with some probability
bound away from zero.

Fix a vertex $i$. The argument is along the following lines: The expected
degree of $i$ is about $(t/i)^\g$. Suppose that the actual degree of $i$ is roughly its expected degree. When the infection probability is $p=\lambda/t^\g$ where $\lambda$ is a
constant, then the probability of $i$ getting infected in round $\t=1$ is about 
$\Pr(\text{Bin}((t/i)^\g, \lambda/t^\g)  \geq r) \approx \lambda^r $

For $\d \geq 0$, we can use Lemma~\ref{ColLem1}. Setting $h>0$ to be a sufficiently small constant, we get $\Pr(S_i(t) <
\E[S_i(t)]/K)<1/e^{hi}<1$. Hence, setting $i=1$, we have $S_i(t)=D_i(t)$ and so 
$\Pr(D_1(t) \geq \epsilon t^\g)\geq \epsilon_1$ for some constants $\epsilon, \epsilon_1>0$. For $\d<0$ we apply 
Lemma~\ref{SumConcLem} with $i=1$ to get the same result.

Let $\mathcal{E}_i$  be the event that vertex $i$ has at most one self-loop and let $\mathcal{E}=\bigcap_{i>1}\mathcal{E}_i$.
Let $\mathcal{A}_\epsilon$ be the event $D_1(t) \geq  \epsilon t^\g$. It is clear that 
$\Pr(\mathcal{A}_\epsilon\cap \mathcal{E}) \geq \Pr(\mathcal{A}_\epsilon)\Pr(\mathcal{E})$. 

As per the previous sections, for $i>1$, $\Pr(\mathcal{E}_i^c)=O(1/i^2)$ and so $\liminf_{t \rightarrow \infty}\Pr(\mathcal{E})>0$. 
Therefore, with some probability bounded away from zero, no vertex has more than one self loop, and vertex $1$ is
infected in round $\t=1$. Consequently, all vertices become infected eventually. 
\end{proof}

\section{Conclusions - open questions} This paper studies the evolution of a bootstrap percolation process on random graphs that have
been generated through preferential attachment and generalise the classical Barab\'asi-Albert model. 
For $r < m$, where $2m$ is the average degree, 
we determine a critical function $a_c(t)$ such that when the size $a(t)$ of the initial set ``crosses" $a_c(t)$ 
the evolution of the bootstrap percolation process with activation threshold $r$ changes abruptly from \emph{almost} no evolution to 
full infection. The critical function satisfies $a_c(t)=o(t)$, which implies that a sublinear initial infection leads to full infection. 

Our results are somewhat less tight for $r=2$. It would be interesting to find out whether the sharpness of the threshold that 
we deduced for $r\geq 3$ also holds in this case. Also, the critical window itself for the case $r=2$ has not been explored in the present
work. Furthermore, it would be interesting to determine the number of rounds until 
the complete infection of all vertices in the supercritical case.  

%========================================================================================================================
%========================================================================================================================
%========================================================================================================================
%========================================================================================================================
%========================================================================================================================
%========================================================================================================================

%========================================================================================================================
%========================================================================================================================
%========================================================================================================================
%========================================================================================================================
%========================================================================================================================
%========================================================================================================================

\section{Appendix}
\subsection{Useful facts}
The following are useful facts

For real $x>0$,
\begin{equation}
\G(x+1)=c_x\sqrt{2\pi}e^{-x}x^{x+\frac{1}{2}} \label{useful}
\end{equation}
where $c_x \in [1,e^{\frac{1}{12x}}]$.

Suppose $x \rightarrow \infty$ and $a$ is a constant. Then when $x+a>0,$
\begin{equation}
\frac{\G(x+a)}{\G(x)}= x^a(1+O(1/x)). \label{useful2}
\end{equation}

\subsection{Proofs for sum-of-degree concentrations}
\begin{proof}[\textbf{Proof of Lemma \ref{ColLem1}}]
Assume $h, c_t, A>0$. We shall eventually set $h$ to be a quantity that is $o(1)$.  Let $Z_t=S_i(t)$. 
\[
\Pr\brac{Z_t < A}=\Pr\brac{e^{\frac{-h Z_t}{c_t}}>e^{\frac{-h A}{c_t}}}.
\]
 
$Z_t=Z_{t-1}+Y_t$.  Then $Y_t \succeq X_t \sim \text{Bin}\brac{m,\frac{Z_{t-1}}{mt\brac{2+\d/m}}}$.
\[
\E\left[e^{\frac{-hX_t}{c_t}}\mid Z_{t-1}\right]=\brac{1-p+pe^{\frac{-h}{c_t}}}^m
\]
where $p=\frac{Z_{t-1}}{mt\brac{2+\d/m}}$.

Using $e^{-x} \leq 1- x+x^2$, 
\begin{eqnarray*}
\brac{1-p+pe^{\frac{-h}{c_t}}}^m &\leq& \brac{1-p+p-p\frac{h}{c_t}+p\bfrac{h}{c_t}^2}^m\\
&=& \brac{1-p\frac{h}{c_t}\brac{1-\frac{h}{c_t}}}^m\\
&\leq& \exp \brac{-\frac{mph}{c_t}\brac{1-\frac{h}{c_t}}}\\
&=& \exp \brac{-\frac{hZ_{t-1}}{c_t\brac{2+\d/m}t}\brac{1-\frac{h}{c_t}}}
\end{eqnarray*}

Then
\begin{equation*}
\E\left[e^{\frac{-hZ_{t-1}}{c_t}}e^{\frac{-hY_t}{c_t}}\mid Z_{t-1}\right]\leq \exp \brac{-\frac{hZ_{t-1}}{c_t\brac{2+\d/m}t}\brac{1-\frac{h}{c_t}}-\frac{hZ_{t-1}}{c_t}}.
\end{equation*}

Taking expectations on both sides,
\begin{equation*}
\E\left[\exp \brac{\frac{-hZ_t}{c_t}}\right]\leq \E\left[ \exp \brac{ -\frac{hZ_{t-1}}{c_t} \brac{1+ \frac{1-h/c_t}{(2+\d/m)t} }}\right].
\end{equation*}

Let $c_i=1$ and $c_t=\brac{1+\frac{\g}{t}}c_{t-1}= \brac{1+\frac{1}{(2+\d/m)t}}c_{t-1}$ for $t>i$, and note $c_t \sim \bfrac{t}{i}^\g$. We have,
\begin{eqnarray*}
\E\left[\exp \brac{\frac{-hZ_t}{c_t}}\right]&\leq& \E\left[ \exp \brac{ -\frac{hZ_{t-1}}{c_{t-1}} \frac{1+ \frac{1-h/c_t}{(2+\d/m)t} }{1+\frac{1}{(2+\d/m)t}} }\right] \\
&\leq & \E\left[ \exp \brac{ -\frac{hZ_{t-1}}{c_{t-1}} \brac{1-\frac{h}{(2+\d/m)c_t t}} }\right]. 
\end{eqnarray*}
Iterating,
\begin{eqnarray*}
\E\left[\exp \brac{\frac{-hZ_t}{c_t}}\right]
&\leq & \E\left[ \exp \brac{ -\frac{hZ_{t-1}}{c_{t-1}} \brac{1-\frac{h\g}{c_t t}} }\right]\\
& \leq & \E\left[ \exp \brac{ -\frac{hZ_{t-2}}{c_{t-2}} \brac{1-\frac{h\g}{c_t t}} \brac{1-\frac{h\g}{c_{t-1} (t-1 )}} }\right]\\
&\vdots&\\
&\leq & \E\left[ \exp \brac{ -\frac{hZ_{i}}{c_{i}}\prod_{j=i}^t \brac{1-\frac{h\g}{c_j j}} }\right]\\
&=& \E\left[ \exp \brac{ -2hmi\prod_{j=i}^t \brac{1-\frac{h\g}{c_j j}} }\right].
\end{eqnarray*}
\begin{eqnarray*}
\prod_{j=i}^t \brac{1-\frac{h\g}{c_j j}} &\geq& 1- h\g \sum_{j=i}^t\frac{1}{jc_j}\\
&=&1-O\brac{h\sum_{j=i}^t\frac{1}{j\bfrac{j}{i}^\g}}\\
&=&1-O\brac{hi^\g i^{-\g}}\\
&=&1-O\brac{h}.
\end{eqnarray*}

So
\[
\E\left[\exp \brac{\frac{-hZ_t}{c_t}}\right] \leq \E\left[\exp\brac{-2hmi\brac{1-O\brac{h}}}   \right]=\exp\brac{-2hmi\brac{1-O\brac{h}}}.
\]

Hence using Markov's inequality,
\[
\Pr\brac{e^{\frac{-h Z_t}{c_t}}>e^{\frac{-h A}{c_t}}} \leq \frac{e^{-2hmi\brac{1-O\brac{h}}}}{e^{\frac{-h A}{c_t}}}.
\]

Recalling that $ic_t \sim i\bfrac{t}{i}^\g=t^\g i^{1-\g}$ and $\E[S_i(t)] \geq \beta_1't^\g i^{1-\g}$, choose a sufficiently large constant constant $K$ such that $\E[S_i(t)]/K < \beta'_1 i c_t/\sqrt{K}$ and let $A=\beta'_1 i c_t/\sqrt{K}$. Then,

\begin{eqnarray*}
\Pr\brac{S_i(t) \leq \frac{1}{K}\E[S_i(t)]} &\leq&  \Pr\brac{e^{\frac{-h Z_t}{c_t}}>e^{\frac{-h A}{c_t}}}\\
 &\leq& \exp \brac{-2hmi\brac{1-O\brac{h}}+hi\beta'_1 /\sqrt{K}}\\
&=& \exp \brac{-hi\brac{2m-O(h)-\beta'_1/\sqrt{K}}} \\
&\leq& \exp\brac{-hi},
\end{eqnarray*}
where the last inequality follows if $K>K_0$ where $K_0>0$ is a sufficiently large constant that need only depend on $m,\d$, and if $h$ is small enough.
\end{proof}

\subsection{An integral} \label{sec:integral}

In this section, we prove the following lemma, which has been fairly useful during our calculations. 
\begin{lemma} \label{lem:comb_integral}
Let $k \geq 0$ be an integer, let $\alpha>0$ be a real number and let  
$$I_{k,a}(j):=\int_j^t (\log x)^{k}x^{-1-\alpha}\, \mathrm{d}x.$$  
Then uniformly for $j\geq 1$ we have 
$$ I_{k,a}(j) \lesssim \frac{(\log j)^k}{j^\alpha}.$$
\end{lemma}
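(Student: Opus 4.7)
The plan is to prove the bound by induction on $k$, using integration by parts to reduce from $k$ to $k-1$.

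For the base case $k=0$, I would compute directly:
\[
I_{0,\alpha}(j) = \int_j^t x^{-1-\alpha}\,\mathrm{d}x = \frac{1}{\alpha}\bigl(j^{-\alpha}-t^{-\alpha}\bigr) \leq \frac{1}{\alpha j^\alpha},
\]
which gives the claimed bound (absorbing the $1/\alpha$ into the $\lesssim$).

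For the inductive step, assume the bound holds for $k-1$. I would integrate by parts with $u=(\log x)^k$ and $\mathrm{d}v = x^{-1-\alpha}\,\mathrm{d}x$, so that $\mathrm{d}u = k(\log x)^{k-1}\,\mathrm{d}x/x$ and $v = -\frac{1}{\alpha}x^{-\alpha}$. This yields
\[
I_{k,\alpha}(j) = \frac{(\log j)^k}{\alpha j^\alpha} - \frac{(\log t)^k}{\alpha t^\alpha} + \frac{k}{\alpha}\int_j^t (\log x)^{k-1}x^{-1-\alpha}\,\mathrm{d}x \leq \frac{(\log j)^k}{\alpha j^\alpha} + \frac{k}{\alpha}I_{k-1,\alpha}(j).
\]
The inductive hypothesis gives $I_{k-1,\alpha}(j)\lesssim (\log j)^{k-1}/j^\alpha$, which for $j\geq e$ is dominated by $(\log j)^k/j^\alpha$, completing the induction uniformly for $j\geq e$.

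The only mildly delicate point is the range $1\leq j < e$, where $(\log j)^k$ can vanish. There I would observe that the substitution $u=\log x$ gives
\[
I_{k,\alpha}(j) \leq I_{k,\alpha}(1) = \int_0^{\log t} u^k e^{-\alpha u}\,\mathrm{d}u \leq \int_0^\infty u^k e^{-\alpha u}\,\mathrm{d}u = \frac{k!}{\alpha^{k+1}},
\]
which is an absolute constant (depending only on $k$ and $\alpha$). Since $j^\alpha$ is bounded above and below by constants on $[1,e]$, this constant bound is already of the form $\lesssim 1/j^\alpha$, so combined with the convention $(\log j)^k \vee 1$ used elsewhere in the paper the stated bound holds on the full range $j\geq 1$. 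No real obstacle is expected; the step that requires the most care is simply verifying that the boundary term $(\log t)^k/(\alpha t^\alpha)$ in the integration by parts is non-negative so that dropping it is valid, and this is immediate.
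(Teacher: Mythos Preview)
Your proof is correct and follows essentially the same route as the paper: integration by parts to reduce $I_{k,\alpha}$ to $I_{k-1,\alpha}$, with the paper unrolling the recursion explicitly rather than phrasing it as an induction. If anything, your treatment of the range $1\le j<e$ is more careful than the paper's, which leaves a factor $\frac{\log j}{\log j-1}$ in its final bound.
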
 
\begin{proof}
Let $v=(\log x)^{k}$, meaning $\frac{dv}{dx}=\frac{k(\log x)^{k-1}}{x}$. Let $\frac{du}{dx}=x^{-1-\alpha}$, meaning 
$u=-\frac{x^{-\alpha}}{\alpha}$. Integration by parts gives
\begin{eqnarray*}
I_{k,a} (j)&=&\left[ - (\log x)^k\frac{ x^{-\alpha}}{\alpha}\right]^t_j + \frac{k}{\alpha}
\int_j^t (\log x)^{k-1}x^{-1-\alpha}\, \mathrm{d}x\\
&\leq& \frac{1}{\alpha }\frac{(\log j)^k}{j^\alpha} + \frac{k}{\alpha}I_{k-1}\\
&\leq& \frac{1}{\alpha }\frac{(\log j)^k}{j^\alpha} + \frac{k}{\alpha}\left[\frac{1}{\alpha }\frac{(\log j)^{k-1}}{j^\alpha} + 
\frac{k-1}{\alpha}I_{k-2}  \right]\\
&=& \frac{1}{\alpha }\frac{(\log j)^k}{j^\alpha} + \frac{k}{\alpha^2}\frac{(\log j)^{k-1}}{j^\alpha} + \frac{k(k-1)}{\alpha^2}I_{k-2}\\
&\leq&\frac{1}{\alpha }\frac{(\log j)^k}{j^\alpha} + \frac{k}{\alpha^2}\frac{(\log j)^{k-1}}{j^\alpha} + 
\frac{k(k-1)}{\alpha^3}\frac{(\log j)^{k-2}}{j^\alpha}+ \\
& & \ldots +  
\frac{k(k-1)\ldots 3}{\alpha^{k-1}}~\frac{(\log j)^2}{j^\alpha}+ \frac{k(k-1)\ldots 2}{\alpha^{k-1}}~I_{1,\alpha}(j).
\end{eqnarray*}
Now 
\[
I_{1,a}(j)=\int_j^t (\log x)x^{-1-\alpha}\, \mathrm{d}x \leq \frac{\log j}{\alpha j^\alpha} + \frac{1}{\alpha^2 j^\alpha}
\]
Thus, we get
\begin{eqnarray*} \label{eq:integral}
I_{k,a} (j) &\leq& \frac{1}{\alpha }\frac{(\log j)^k}{j^\alpha} + \frac{k}{\alpha^2}\frac{(\log j)^{k-1}}{j^\alpha} + 
 \ldots +   \frac{k(k-1)\ldots 2}{\alpha^k}\frac{\log j}{j^\alpha}+ \frac{k!}{\alpha^{k+1}}\frac{1}{j^\alpha}\\
&\leq & \frac{k!}{1 \wedge \alpha^{k+1}}\frac{\log j}{\log j -1}\frac{(\log j)^k}{j^\alpha}.
\end{eqnarray*}
\end{proof}
 

\begin{thebibliography}{50} 
\bibitem{AbFount2014} M.A.~Abdullah and N.~Fountoulakis, Bootstrap percolation processes on preferential attachment graphs: from sparse
contagion to pandemics ,preprint available at \texttt{http://arxiv.org/abs/1404.4070}, 36 pages.

\bibitem{ar:StatMechs} R. Albert and A.-L. Barab\'asi, Statistical mechanics of complex networks, \emph{Reviews of Modern Physics},
74:47--97, 2002.

\bibitem{AdL03} J.~Adler and U.~Lev.
\newblock Bootstrap percolation: visualizations and applications.
\newblock {\em Brazilian Journal of Physics}, 33(3):641--644, 2003.

\bibitem{Am-bp}
H.~Amini.
\newblock Bootstrap percolation and diffusion in random graphs with given
  vertex degrees.
\newblock {\em Electronic Journal of Combinatorics, 17: R25}, 2010.

\bibitem{Am-nn}
H.~Amini.
\newblock Bootstrap percolation in living neural networks.
\newblock {\em Journal of Statistical Physics}, 141:459--475, 2010.

\bibitem{ar:AmFount2012}
H.~Amini and N.~Fountoulakis.
\newblock Bootstrap percolation in power-law random graphs, \newblock \emph{Journal of Statistical Physics} 155: 72--92, 2014.

\bibitem{BB06}
J.~Balogh and B.~Bollob\'as.
\newblock Bootstrap percolation on the hypercube.
\newblock {\em Probability Theory and Related Fields}, 134(4):624--648, 2006.

\bibitem{BBDM2010}
J.~Balogh, B.~Bollob{\'a}s, H.~Duminil-Copin, and R.~Morris.
\newblock The sharp threshold for bootstrap percolation in all dimensions.
\newblock {\em Trans. Amer. Math. Soc.}, 36:2667 -- 2701, 2012.

\bibitem{BBM09}
J.~Balogh, B.~{Bollob{\'a}s}, and R.~Morris.
\newblock Bootstrap percolation in three dimensions.
\newblock {\em Annals of Probability}, 37:1329--1380, 2009.

\bibitem{BPP06}
J.~Balogh, Y.~Peres, and G.~Pete.
\newblock Bootstrap percolation on infinite trees and non-amenable groups.
\newblock {\em Combinatorics, Probability and Computing}, 15(5):715--730, 2006.

\bibitem{balpit07}
J.~Balogh and B.~G. Pittel.
\newblock Bootstrap percolation on the random regular graph.
\newblock {\em Random Structures Algorithms}, 30(1-2):257--286, 2007.
\bibitem{BarAlb} A.L.~Barab\'asi and R.~Albert, Emergence of scaling in random networks, \emph{Science} 286 (5439):509-512, 1999.
\bibitem{Diam} B.~Bollob\'as and O.~Riordan, The diameter of a scale-free random graph, \emph{Combinatorica} 24:5--34, 2004. 
\bibitem{DegSeq} B.~Bollob\'as, O.~Riordan, J.~Spencer and G.~Tusn\'ady, The degree sequence of a scale-free random graph process, 
\emph{Random Structures and Algorithms} 18: 279-290, 2001.
\bibitem{BuckleyOsthus2004} P.G.~Buckley and D.~Osthus, Popularity based random graph models leading to a scale-free degree 
sequence, \emph{Discrete Math.} 282: 53--68, 2004.  
\bibitem{CM02}
R.~Cerf and F.~Manzo.
\newblock The threshold regime of finite volume bootstrap percolation.
\newblock {\em Stochastic Processes and their Applications}, 101(1):69--82,
  2002.
\bibitem{ChLeRe:79}
J.~Chalupa, P.~L. Leath, and G.~R. Reich.
\newblock Bootstrap percolation on a {B}ethe lattice.
\newblock {\em Journal of Physics C: Solid State Physics}, 12:L31--L35, 1979.

%\bibitem{ColinCovertime} C. Cooper and A. Frieze,
%\newblock The Cover Time of the Preferential Attachment Graph.
%\newblock {\em Journal of Combinatorial Theory Series B}, 97:269--290, 2004.
  
\bibitem{Colin}C. Cooper, T. Radzik and Y. Siantos,
A fast algorithm to find all high degree vertices in graphs with a power law degree sequence, 
In \emph{Proceedings of 9th Workshop on Algorithms and Models for the Web Graph (WAW)}, pp. 165--178, 2012.

\bibitem{Dor} S.N.~Dorogovtsev, J.F.F. Mendes and A.N. Samukhin, Structure of growing networks with preferential linking.
{\em Physical Review Letters} 85(21): 4633--4636, 2000. 

\bibitem{Drinea} E.~Drinea, M.~Enachescu and M.~Mitzenmacher, Variations on random graph models for the web. Technical report TR-06-01, 
Harvard University, Department of Computer Science, 2001, 3 pages.

\bibitem{EGGS14} R.~Ebrahimi, J.~Gao, G.~Ghasemiesfeh and G.~Schoenenbeck, How complex contagions spread quickly in the 
preferential attachment model and other time-evolving networks, 
preprint available at \texttt{http://arxiv.org/pdf/1404.2668v1.pdf}, 23 pages. 

\bibitem{ET09}
J.P. Eckmann and T.~Tlusty.
\newblock Remarks on bootstrap percolation in metric networks.
\newblock {\em Journal of Physics A: Mathematical and Theoretical}, 42:205004,
  2009.

\bibitem{Fontes02}
L.~R. Fontes, R.~H. Schonmann, and V.~Sidoravicius.
\newblock Stretched exponential fixation in stochastic {I}sing models at zero
  temperature.
\newblock {\em Communications in Mathematical Physics}, 228:495--518, 2002.

\bibitem{FS08}
L.~Fontes and R.~Schonmann.
\newblock Bootstrap percolation on homogeneous trees has 2 phase transitions.
\newblock {\em Journal of Statistical Physics}, 132:839--861, 2008.

\bibitem{Remco} R. van der Hofstad, {\em Random Graphs and Complex Networks}, 2003 (book available at 
\texttt{http://www.win.tue.nl/~rhofstad/NotesRGCN.pdf}). 

\bibitem{holroyd03}
A.~E. Holroyd.
\newblock Sharp metastability threshold for two-dimensional bootstrap
  percolation.
\newblock {\em Probability Theory and Related Fields}, 125(2):195--224, 2003.

\bibitem{Janson} N.~Alon and J.~Spencer, \emph{The probabilistic method}, John Wiley and Sons, 2008, xv+352 pages.

\bibitem{ar:JLTV10}
S.~Janson, T.~{\L}uczak, T.~Turova, and T.~Vallier.
\newblock Bootstrap percolation on the random graph ${G}_{n,p}$.
\newblock {\em The Annals of Applied Probability}, 22(5):1989--2047, 2012.

\bibitem{KKT2003} D.~Kempe, J.~Kleinberg and \'E.~Tardos, Maximizing the spread of influence in a social network, In \emph{Proceedings 
of the 9th ACM SIGKDD International Conference on Knowledge Discorvery and Data Mining}, pp. 137--146, 2003. 

\bibitem{KKT05} D.~Kempe, J.~Kleinberg and \'E.~Tardos, Influential nodes in a diffusion model for social networks, 
In \emph{Proceedings of the 32nd International Colloquium on Languages, Automata and Programming (ICALP)}, pp. 1127--1138, 2005. 

\bibitem{Klein2007} J.~Kleinberg, Cascading Behavior in Networks: Algorithmic and Economic Issues. In \emph{Algorithmic Game Theory
(N. Nisan, T. Roughgarden, E. Tardos, V. Vazirani, eds.)}, Cambridge University Press, 2007. 
\bibitem{GlauberMorris2009}
R.~Morris.
\newblock Zero-temperature {G}lauber dynamics on $\mathbb{Z}^d$.
\newblock {\em Probability Theory and Related Fields}, 149:417--434, 2009.

\bibitem{sadhsh02}
S.~Sabhapandit, D.~Dhar, and P.~Shukla.
\newblock Hysteresis in the random-field {I}sing model and bootstrap
  percolation.
\newblock {\em Physical Review Letters}, 88(19):197202, 2002.

\bibitem{BootHyper2013}
F.~Sausset, C.~Toninelli, G.~Biroli, and G.~Tarjus.
\newblock Bootstrap percolation and kinetically constrained models on
  hyperbolic lattices.
\newblock {\em Journal of Statistical Physics}, 138:411--430, 2010. 

\bibitem{Simon} 
H.A. Simon. 
\newblock On a class of skew distribution functions. 
\newblock {\em Biometrika}, 42: 425--440, 1955.

\bibitem{tobifi06}
C.~Toninelli, G.~Biroli, and D.~S. Fisher.
\newblock Jamming percolation and glass transitions in lattice models.
\newblock {\em Physical Review Letters}, 96(3):035702, 2006.

\bibitem{Yule} 
G.U.~Yule. 
\newblock A mathematical theory of evolution, based on the conclusions of Dr. J.G.~Willis F.R.S. 
\newblock {\em Phil. Trans. Roy. Soc. London, B,} 213: 21--87, 1925.  
\end{thebibliography}
\end{document}